\numberwithin{equation}{section}
\newtheorem{lem}{Lemma}[section]
\newtheorem{thm}{Theorem}[section]
\newtheorem{proposition}[thm]{Proposition}
\theoremstyle{remark}
\newtheorem{rmk}{Remark}[section]
\newtheorem{definition}[thm]{Definition}
\renewcommand{\tilde}{\widetilde}
\renewcommand{\bar}{\overline}
\newcommand{\nn}{\nonumber}
\newcommand{\R}{{\mathbb R}}
\newcommand{\del}{\partial}
\newcommand{\dt}{ \, {\rm d} t}
\newcommand{\dx}{ \, {\rm d} x}
\newcommand{\dy}{ \, {\rm d} y}
\newcommand{\ds}{\, {\rm d} s}
\newcommand{\dmu}{\, {\rm d} \mu}
\newcommand{\dpi}{\, {\rm d} \pi}
\newcommand{\Eps}{\varepsilon}
\newcommand{\Ni}{\noindent}
\newcommand{\EE}{\mathbb{E}}
\newcommand{\RR}{\mathbb{R}}
\newcommand{\PP}{\mathbb{P}}
\newcommand{\la}{\langle}
\newcommand{\ra}{\rangle}
\newcommand{\abs}[1]{\left\lvert#1\right\rvert}
\newcommand{\norm}[1]{\left\lVert#1 \, \right\rVert}
\newcommand{\viint}[2]{\left\langle#1, \, #2 \,\right\rangle}
\newcommand{\vpran}[1]{\left(#1\right)}
\author{Razvan C. Fetecau} 
\address{Department of Mathematics, Simon Fraser University, 8888 University Dr., Burnaby, BC V5A 1S6, Canada}
\email{van@math.sfu.ca}
\author{Hui Huang}
\address{Department of Mathematics, Simon Fraser University, 8888 University Dr., Burnaby, BC V5A 1S6, Canada}
\email{hha101@sfu.ca}
\author{Weiran Sun}
\address{Department of Mathematics, Simon Fraser University, 8888 University Dr., Burnaby, BC V5A 1S6, Canada}
\email{weirans@sfu.ca}
\begin{document}
\title[Propagation of Chaos]{Propagation of chaos for the Keller-Segel equation over bounded domains}
\maketitle
\begin{abstract}
In this paper we rigorously justify the propagation of chaos for the parabolic-elliptic Keller-Segel equation  over bounded convex domains. The boundary condition under consideration is the no-flux condition. As intermediate steps, we establish the well-posedness of the associated stochastic equation as well as the well-posedness of the Keller-Segel equation for bounded weak solutions.  
\end{abstract}
{\small {\bf Keywords:}
	Coupling method, chemotaxis, Newtonian potential, mean-field limit, uniqueness, Wasserstein metric.}

\section{Introduction}
In biology, \textit{chemotaxis} describes the phenomenon that cells and organisms direct their movement in response to chemical gradients. This mechanism enables bacteria such as Escherichia coli (E. coli)  to swim toward the highest concentration of food  or flee from poisons. It also facilitates the movement of sperm towards the egg during fertilization.
%
%
%
Mathematically, one of the most classical models for studying chemotaxis is the Keller-Segel (KS)-type equations, introduced in \cite{keller1970initiation} to describe aggregation of the slime mold amoebae. 

There exists a vast literature on the well-posedness and mean-field limits of Keller-Segel type equations  posed over the whole space $\R^d$ (see for example \cite{bian2013dynamic,blanchet2008infinite,dolbeault2004optimal,jager1992explosions,garcia2017,RY,huilearning}). Realistic physical settings in biological experiments involve however bounded domains (for example, cells such as bacteria or slime mold are cultured in Petri dishes which have physical boundaries). This motivates the work in the present paper, which is devoted to a Keller-Segel type equation over bounded domains.


Our specific goal in this paper is to justify the mean-field limit leading to the parabolic-elliptic Keller-Segel model in a bounded convex domain $D$. Specifically, let $\rho_t$ be the macroscopic density and $F$ be the interaction force among particles. Then the KS equation under investigation has the form
\begin{align}\label{PDE}
\begin{cases}
\partial_t\rho_t =\nu\triangle \rho_t-\nabla \cdot[\rho_t F\ast \rho_t] \,, 
\quad & x\in D\,, \quad t>0\,,\\[2pt]
\rho_t(x)|_{t=0}=\rho_0(x)\,,
\quad &x\in D \,, \\[2pt]
\langle \nu\nabla\rho_t-\rho_t F\ast \rho_t,n\rangle=0 \,,
\quad &x \in \partial D \,,
\end{cases}
\end{align}
where $\viint{\cdot}{\cdot}$ denotes the dot product in $\R^d$ $(d\geq 2)$, $n$ is the normal direction to $\partial D$, and the convolution is defined as
\begin{align*}
F \ast \rho_t (x) = \int_{D} F(x - y) \rho_t (y) \dy \,,
\qquad
\text{for any $x \in D$} \,.
\end{align*}
The particular $F$ considered in this paper is the negative gradient of the Newtonian potential given by
\begin{align}\label{F}
F(x) = -\nabla\Phi(x)
=-\frac{C_*x}{|x|^d}\,,
\quad 
\forall~ x\in\mathbb{R}^d\backslash\{0\}\,, \quad d\geq2\,,
\end{align} 
with $C_*=\frac{\Gamma(d/2)}{2\pi^{d/2}}$ and
\begin{align}\label{potential}
\Phi(x)
=\begin{cases}
\frac{1}{2\pi}\ln|x| \,, & \text{ if } d=2\,,\\[3pt]
-\frac{C_d }{|x|^{d-2}} \,,  & \text{ if } d\geq3\,.
\end{cases}
\end{align}
Here, $C_d=\frac{1}{d(d-2)\alpha_d}$ with $\alpha_d=\frac{\pi^{d/2}}{\Gamma(d/2+1)}$ denoting the volume of $d$-dimensional unit ball.  
%
Equation~\eqref{PDE} is also commonly written as a system of equations for the density $\rho_t$ and the concentration $c = - \Phi \ast \rho_t$ of the chemo-attractant. The main mechanism driving the solutions of  \eqref{PDE} is the competition between the diffusion and the nonlocal aggregation. We refer the reader to the review paper \cite{horstmann20031970}  or Chapter 5 in \cite{perthame2006transport} for more discussions of this model.

The main purpose of this paper is to justify the propagation of chaos for the KS system \eqref{PDE}. To do this, we model the dynamics of the particles by using equations on three levels: a continuity equation on the macroscopic level \eqref{PDE},  a stochastic equation on the mesoscopic level  and a stochastic interacting particle system on the microscopic level. 
%
%
The methodology in this paper is to approach the PDE model \eqref{PDE} from its underlying microscopic stochastic particle system. Specifically, let $(\Omega,\mathcal{F},(\mathcal{F}_t)_{t\geq0},\PP)$ be a probability space endowed with a filtration $(\mathcal{F}_t)_{t\geq0}$ and $\{B_t^i\}_{i=1}^N$ be $N$ independent $d$-dimensional Brownian motions on this probability space. The system of interacting particles underlying the KS equation \eqref{PDE} is given by:
\begin{align}\label{particle}
\begin{cases}
{X}_t^i={X}^i_0+\frac{1}{N-1}\sum\limits_{j\neq i}^N \int_{0}^tF({X}_s^i-{X}_s^j) \ds+\sqrt{2\nu}{B}_t^i-R_t^i\,,\quad i=1,\cdots, N, \quad t>0\,,
\\[3pt]
R_t^i=\int_0^tn(X_s^i) \,{\rm d}|R^i|_s\,,\quad |R^i|_t=\int_0^t\textbf{1}_{\partial D}(X_s^i) \,{\rm d}|R^i|_s \,, 
\qquad
X^i_t \big|_{t = 0} = X^i_{0} \,,
\end{cases}
\end{align}
where $\{{X}^i_t\}_{i=1}^N$ are the trajectories of $N$ particles and $F$ given by \eqref{F} models pairwise interaction between the particles. We assume that the initial data $\{{X}^i_0\}_{i=1}^N$ are independently, identically distributed (i.i.d.) random variables with a common probability density function $\rho_0(x)$ and $\sqrt{2\nu}$ is a constant.
Also, $n(x)$ denotes the outward normal to $\partial D$ at the point $x\in\partial D$ and $R_t^i$ is a reflecting process associated to $X_t^i$  with a bounded total variation. Moreover, $|R|_t^i$ is the total variation of $R_t^i$ on $[0,t]$, namely
\begin{equation*}
|R|_t^i=\sup\sum\limits_k|R_{t_k}^i-R_{t_{k-1}}^i|\,,
\end{equation*}
where the supremum is taken over all partitions such that $0=t_0<t_1<\cdots<t_n=t$.

Solving the stochastic differential equation (SDE) \eqref{particle} is known as the Skorokhod problem. Skorokhod introduced this problem in 1961 \cite{skorokhod1961stochastic,skorokhod1962stochastic}, where he considered a SDE with a reflecting diffusion process on $D=(0,\infty)$. The multi-dimensional version of Skorokhod's problem was solved by Tanaka \cite{tanaka1979stochastic}, where the domain $D$ was assumed to be convex. In \cite{lions1984stochastic} Lions and Sznitman extended the result to a general domain satisfying certain admissibility conditions, where the admissibility roughly means that the domain can be approximated by smooth domains in a certain sense. Later, Saisho \cite{saisho1987stochastic} removed the admissibility condition of the domain by applying the techniques used in \cite{tanaka1979stochastic}.

The analysis of the limiting process of interacting particle systems as $N \to \infty$ is usually called the \textit{mean-field limit}. 
There is extensive literature \cite{spohn2004dynamics,carrillo2010particle,jabin2014review,jabin2017mean} on mean-field limits for various models when the spatial domain is the whole space $\R^d$. For second order systems,  mean-field limits with globally Lipschitz forces in the whole space $\RR^d$ were obtained by Braun and Hepp \cite{braun1977vlasov} and Dobrushin \cite{dobrushin1979vlasov}. Later, in \cite{bolley2011stochastic},  these results were extended to particle systems with locally Lipschitz interacting forces under certain moment control assumptions. The whole space case with a singular force was treated in \cite{carrillo2018propagation,jabin2015particles,boers2016mean,lazarovici2015mean}. 
In particular, the mean-field limits for the  KS system \eqref{PDE} in the whole space were shown in \cite{fournier2015stochastic,HH1,HH2,garcia2017,RY,huilearning}. 

Meanwhile, there is much less work on mean-field limits for systems over bounded domains. In \cite{sznitman1984nonlinear}, the author derived rigorously the mean-field limit for particle systems with reflecting boundary conditions and bounded Lipschitz interaction forces. 
In Carrillo $et\,al.$ \cite{carrillo2014nonlocal}, the authors considered a model in general domains (which is not necessarily convex ) with a $\lambda$-geodesically convex potential. 
Very recently Choi and Salem \cite{choi2016propagation} proved the mean-field limit for the case where the particle system has a particular type of bounded discontinuous interacting force. 

The intermediate model between the microscopic model \eqref{particle} and the macroscopic equation \eqref{PDE} is the following stochastic equation for the mean-field self-consistent stochastic process  $({Y}_t)_{t\geq0}$:
\begin{equation}\label{SDE'}
\left\{\begin{array}{l}
{Y}_t={Y}_0+\int_{0}^t\int_{D}F({Y}_s-y)\rho_s(y) \dy\ds+\sqrt{2\nu}{B}_t-\tilde R_t\,, \quad t>0\,,\\ [3pt]
\tilde R_t=\int_0^tn(Y_s) \, {\rm d}|\tilde R|_s\,,\quad |\tilde R|_t=\int_0^t\textbf{1}_{\partial D}(Y_s) \, {\rm d}|\tilde R|_s\,,\\
\end{array}\right.
\end{equation}
where $(\rho_t)_{t\geq0}$ is the marginal density of $({Y}_t)_{t\geq0}$ for any $t\geq 0$ and $Y_0$ has density $\rho_0(x)$. The solution of \eqref{SDE'} is considered in the strong sense when the Brownian motion $B_t$ is prescribed.  By It\^{o}'s formula \cite{brent}, it can be shown that the probability density function $\rho_t$ of $Y_t$ satisfies the KS equation \eqref{PDE} in the weak sense (see Definition \ref{weaksolutionpde} below).

The goal of the present paper is to justify the mean-field limit from the particle system \eqref{particle} with the Newtonian force \eqref{F} to the mean-field KS equation \eqref{PDE} (see Theorem \ref{mainthm2}).  
More precisely, we show an explicit convergence rate from the weak solution of the mean-field equation \eqref{PDE} to the empirical measure associated with the particle system \eqref{particle}. The main idea is to introduce an approximate mean-field dynamics $\{Y_t^i\}_{i=1}^N$ satisfying \eqref{SDE'} and show that it well approximates the interacting particles $\{X_t^i\}_{i=1}^N$ with a regularized $F$ for large $N$'s. Our main contribution is to extend the propagation of chaos posed on the whole space with a singular force \cite{HH1,HH2,garcia2017,RY,huilearning} or posed on bounded domains with bounded forces \cite{choi2016propagation} to bounded convex domains with a singular force. We also show a universal upper bound of the expectation of the collision time in $\R^2$  (see Theorem \ref{collison}).

Before proving the propagation of chaos,  we need to establish the well-posedness of solutions for the KS equation \eqref{PDE} and the stochastic equation \eqref{SDE'}. 
There is a huge literature on the well-posedness of the KS equation (see for example \cite{bian2013dynamic,blanchet2008infinite,dolbeault2004optimal,jager1992explosions} for the existence and to \cite{liu2016refined,kawakami2016uniqueness,sugiyama2010uniqueness, carrillo2012uniqueness,fernandez2016uniqueness} for the uniqueness).  There are also results for variants of the KS equation set on bounded domains with no-flux boundary conditions (see for example \cite{biler1995existence,biler1998local,souplet2018blow,tao2012eventual,tao2011boundedness,winkler2016two,cieslak2017global,nagai2001blowup,souplet2018blow,jager1992explosions,bertozzi2009existence,bedrossian2011local}).
In this paper, we use a coupling method to justify the well-posedness of equations~\eqref{PDE} and~\eqref{SDE'}. This coupling method has been used to show the well-posedness for the KS equation in the whole space in \cite{RY,huang2016well}. Its main idea is to make use of the link between the KS equation \eqref{PDE} and the stochastic equation \eqref{SDE'} driven by a Brownian motion. Our main contribution in this part is to extend such coupling method from $\R^d$ to the bounded domain case.




The layout of the paper is as follows: in Section~\ref{sec:settings} we explain in detail the settings of our equations and the precise statements of the main theorems. In Section~\ref{sec:prelim}, we prove some technical lemmas that are used in the proofs. In Section~\ref{sec:wellposed}, we establish existence, uniqueness, and stability of the KS equation~\eqref{PDE} and the stochastic equation~\eqref{SDE'}. Finally, we show the finite-time collision and propagation of chaos in Section~\ref{sec:prop-chaos}.

\section{Settings and main results} \label{sec:settings}
In this section we explain the basic setting and main results in this paper.
Our main approach in investigating the mean-field limit with a singular potential is to apply a regularization mechanism \cite{HH2, HH1}.  To this end, let $J(x)$ be a blob function such that
\begin{align*}
J \in C^\infty(\mathbb{R}^d) \,,
\quad
J \geq 0 \,,
\quad
\mbox{supp}\,J(x)\subset {B}(0, 1)\,,
\quad
\int_{{B}(0, 1)}J(x) \dx=1.
\end{align*}
For any $\Eps > 0$, define $J_\varepsilon(x)=\frac{1}{\varepsilon^d}J(\frac{x}{\varepsilon})$ and
\begin{align} \label{def:F-Eps}
F_\varepsilon(x)=J_\varepsilon\ast F(x) \,.
\end{align}
The regularized version of \eqref{particle} has the form
\begin{align}\label{Rparticle}
\begin{cases}
{X}_t^{i,\varepsilon}={X}^i_0+\frac{1}{N-1}\sum\limits_{j\neq i}^N \int_{0}^tF_\varepsilon({X}_s^{i,\varepsilon}-{X}_s^{j,\varepsilon}) \ds+\sqrt{2\nu}{B}_t^{i,\varepsilon}-R_t^{i,\varepsilon}\,,\quad i=1,\cdots, N, \quad t>0\,, \\[3pt]
R_t^{i,\varepsilon}=\int_0^tn(X_t^{i,\varepsilon}) \, {\rm d} |R^{i,\varepsilon}|_s\,,\quad |R^{i,\varepsilon}|_t=\int_0^t\textbf{1}_{\partial D}(X_s^{i,\varepsilon}) \, {\rm d}|R^{i,\varepsilon}|_s \,,
\end{cases}
\end{align}
Here we assume that the initial data $\{{X}^i_0\}_{i=1}^N$ are i.i.d. random variables with a common probability density function $\rho_0(x)$.

The global well-posedness of \eqref{Rparticle} is shown in \cite[Theroem 1.1]{choi2016propagation}, which states 
\begin{thm} [\cite{choi2016propagation}]\label{choithm}
	Let $N\geq 2$. For any initial data $(X_0^i)_{i=1,\cdots,N}\in \overline{D}^N$ and for any $T>0$, there exists at least one weak solution $(X_t^{i,\varepsilon},R_t^{i,\varepsilon},{B}_t^{i,\varepsilon})_{i=1,\cdots,N}$ to system \eqref{Rparticle} on the time interval $[0,T]$.
\end{thm}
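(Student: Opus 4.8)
The plan is to solve the regularized system \eqref{Rparticle} as a reflecting stochastic differential equation on the convex product domain $\overline{D}^N$, exploiting the fact that the mollified force $F_\Eps$ is smooth and globally Lipschitz. The first step is to record the regularity of the drift. Since $F$ in \eqref{F} is locally integrable (it behaves like $\abs{x}^{1-d}$ near the origin) and $J_\Eps$ is smooth with compact support, the convolution $F_\Eps=J_\Eps\ast F$ is smooth and $\partial^\alpha F_\Eps=(\partial^\alpha J_\Eps)\ast F$ is bounded for every multi-index $\alpha$; indeed $\abs{\partial^\alpha F_\Eps(x)}\leq \norm{\partial^\alpha J_\Eps}_\infty\sup_x\int_{B(x,\Eps)}\abs{F(y)}\dy$, and the last supremum is finite because the radial rearrangement of $\abs{F}$ gives $\int_{B(x,\Eps)}\abs{y}^{1-d}\dy\leq c\,\Eps$. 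Hence $F_\Eps$ is bounded and globally Lipschitz on $\R^d$ with constant $L_\Eps$, and so is the coupled drift $b\colon \overline{D}^N\to\R^{Nd}$ whose $i$-th block is $\frac{1}{N-1}\sum_{j\neq i}F_\Eps(x^i-x^j)$.

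Next I would set up the associated Skorokhod problem. Because $D$ is convex, the product $D^N$ is convex (though non-smooth at its edges), and the componentwise normal reflection in \eqref{Rparticle} is exactly normal reflection on $\partial(D^N)$: when the $i$-th component lies on $\partial D$, the outward normal to $D^N$ has only its $i$-th block equal to $n(X^{i})$, so the boundary push in \eqref{Rparticle} agrees with the subdifferential/normal-cone reflection on $D^N$. By Tanaka's resolution of the Skorokhod problem on convex domains \cite{tanaka1979stochastic}, for every continuous path $w\in C([0,T];\R^{Nd})$ with $w(0)\in\overline{D}^N$ there is a unique decomposition $\xi=w-\eta\in C([0,T];\overline{D}^N)$ with $\eta$ of bounded variation; moreover the resulting Skorokhod map $\Gamma\colon w\mapsto\xi$ is Lipschitz continuous in the supremum norm.

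The third step is a pathwise fixed-point argument. Fix the driving input $w^B_t=(X_0^1,\dots,X_0^N)+\sqrt{2\nu}\,(B_t^{1,\Eps},\dots,B_t^{N,\Eps})$ and, for $X\in C([0,T];\overline{D}^N)$, set
\[
\Psi(X)_t=\Gamma\!\left(w^B_\cdot+\int_0^\cdot b(X_s)\ds\right)\!(t)\,.
\]
Combining the Lipschitz constants of $\Gamma$ and of $b$, one checks that $\Psi$ is a contraction on $C([0,T_0];\overline{D}^N)$ for $T_0$ small (for almost every $\omega$), which produces a unique local solution. Since $\overline{D}^N$ is bounded and $b$ is bounded, the solution cannot escape and the construction can be restarted on successive subintervals of fixed length, extending it to all of $[0,T]$ and yielding a strong — hence weak — solution $(X_t^{i,\Eps},R_t^{i,\Eps},B_t^{i,\Eps})_{i=1}^N$.

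The main obstacle is the reflection term $R^{i,\Eps}$: away from $\partial D$ the contraction is routine, but one must control the bounded-variation boundary contribution, where naive Lipschitz stability of the map $w\mapsto\xi$ can fail. This is precisely the point at which convexity of $D$ is indispensable, since it is what guarantees the Lipschitz continuity of the Skorokhod map in \cite{tanaka1979stochastic} and thereby makes the contraction close. An alternative, closer to the weak-solution formulation adopted in \cite{choi2016propagation} and needing only boundedness rather than Lipschitz continuity of the force, is to build approximate solutions by penalization or an Euler scheme, prove tightness of their laws on path space, and identify any limit point as a solution of the associated submartingale (reflected martingale) problem; this route delivers directly the weak solution asserted in the statement.
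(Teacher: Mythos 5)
The paper does not prove this statement at all: Theorem~\ref{choithm} is imported verbatim from \cite{choi2016propagation}, so there is no internal proof to compare against. Judged on its own merits, your overall strategy --- view \eqref{Rparticle} as a normally reflected SDE on the convex product domain $\overline{D}^N$ with a bounded, globally Lipschitz drift, and invoke the convex-domain Skorokhod theory of \cite{tanaka1979stochastic} --- is sound, and your identification of the componentwise reflection with the normal cone of $D^N$ is correct. In fact the cleanest route is simply to cite Tanaka's existence and uniqueness theorem for reflecting SDEs with Lipschitz coefficients in convex domains, which applies directly here and yields a strong (hence weak) solution; the ``weak solution'' phrasing in the statement is inherited from \cite{choi2016propagation}, where the force is merely bounded and discontinuous and only the compactness route you sketch at the end is available.

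The genuine gap is the assertion that the Skorokhod map $\Gamma$ on a general convex domain is Lipschitz continuous in the supremum norm, which is what your Banach contraction hinges on. Tanaka does not prove this, and it is not true for general convex domains: the estimate one gets from convexity is of the form
\begin{equation*}
\norm{\xi_1-\xi_2}_{\infty,T}^2 \;\leq\; C\Big(\norm{w_1-w_2}_{\infty,T}^2+\norm{w_1-w_2}_{\infty,T}\big(|\eta_1|_T+|\eta_2|_T\big)\Big)\,,
\end{equation*}
i.e.\ a H\"older-$1/2$ modulus that degrades with the total variations of the reflection terms. Lipschitz continuity of $\Gamma$ is only known in special situations (half-spaces, suitable polyhedra, or the ``admissible'' domains of \cite{lions1984stochastic}). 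Consequently ``one checks that $\Psi$ is a contraction'' does not close as written: to make the Picard iteration converge you must additionally prove a uniform bound on the total variation of the reflection terms along the iterates (this is precisely the technical work in \cite{tanaka1979stochastic} and \cite{saisho1987stochastic}, using the boundedness of the domain and of the drift). Either carry out that total-variation control, cite Tanaka's SDE theorem as a black box, or fall back on the tightness/submartingale-problem argument you mention, which needs no Lipschitz property of $\Gamma$ and is essentially the proof in \cite{choi2016propagation}.
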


\begin{rmk}
	Note that the regularized system~\eqref{Rparticle} is the same as the original system~\eqref{particle} before the collision time $\tau$ defined in Theorem~\ref{collison}.
\end{rmk}

Next, we give the definition of weak solutions to the KS equation \eqref{PDE}.
\begin{definition}\label{weaksolutionpde} \textbf{(Weak solutions)}
	Assume $D\subset\RR^d$ is a bounded convex domain with  boundary $\del D \in C^1$.
	Let $T > 0$ and $0\leq\rho_0(x)\in  L^\infty(D)$. A probability density function $\rho_t(x)\in L^\infty\left(0, T; L^\infty(D)\right)$ is a weak solution to \eqref{PDE} if it holds that
	\begin{align}\label{Sindensityfun}\nonumber
	\int_{D}\rho_t(x)\phi(x) \dx
	&=\int_{D}\rho_0(x)\phi(x) \dx+\nu\int_0^t\int_{D}  \Delta \phi(x)\rho_s(x) \dx\ds
	\\
	&\qquad
	+\int_0^t\int_{D}\rho_s(x)F\ast\rho_s(x)\cdot\nabla\phi(x) \dx\ds  \, ,
	\end{align}
	for any test function $\phi\in {C}^\infty(\bar D)$ satisfying $\langle \nabla\phi,n \rangle=0$ on $\partial D$.
\end{definition}

In our first main theorems we establish the well-posedness of \eqref{SDE'} and \eqref{PDE} using the coupling method.
\begin{thm}\label{mainthm}{\textbf{(Existence)}}
	Let $Y_0$ be a random variable with density $\rho_0\in L^\infty(D)$. Suppose the singular interaction force $F$ is given by \eqref{F}. Then there exists $T > 0$ such that equation \eqref{SDE'} has a strong solution $(Y_t,\tilde R_t)$ and equation~\eqref{PDE} with initial data $\rho_0$ has a weak solution $\rho_t\in L^\infty\left(0, T; L^\infty(D)\right)$. Moreover, $\rho_t$ is the density of $Y_t$ up to time $T$. 
\end{thm}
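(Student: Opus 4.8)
The backbone of the argument is the probabilistic representation already emphasized in the introduction: if $(Y_t,\tilde R_t)$ solves \eqref{SDE'} and $\rho_t$ denotes the time-marginal law of $Y_t$, then applying It\^o's formula to $\phi(Y_t)$ for test functions $\phi\in C^\infty(\bar D)$ with $\langle\nabla\phi,n\rangle=0$ on $\partial D$ produces exactly the weak formulation \eqref{Sindensityfun}; the Neumann condition on $\phi$ is precisely what absorbs the reflecting contribution $\tilde R_t$. Hence it suffices to construct the process $(Y_t,\tilde R_t)$ \emph{together with} an $L^\infty$ control on its marginal densities, and the weak solution of \eqref{PDE} comes for free, with $\rho_t$ being the density of $Y_t$ as claimed. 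Since $\bar D$ is compact, the moment and tightness issues that complicate the whole-space theory are trivial here; the genuine difficulties are the singularity of $F$ and the reflecting boundary.

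First I would regularize and solve. Replacing $F$ by $F_\varepsilon$ from \eqref{def:F-Eps} makes the drift bounded and smooth, so the reflected McKean--Vlasov equation with $Y_t^\varepsilon=Y_0+\int_0^t(F_\varepsilon\ast\rho_s^\varepsilon)(Y_s^\varepsilon)\ds+\sqrt{2\nu}B_t-\tilde R_t^\varepsilon$ and $\rho_t^\varepsilon=\mathrm{Law}(Y_t^\varepsilon)$ admits a unique strong solution on any $[0,T]$, by Tanaka's solution of the Skorokhod problem on convex domains \cite{tanaka1979stochastic} combined with a Banach fixed point in path space; the Skorokhod map on a convex $D$ is Lipschitz, so the usual scheme applies to the law-dependent drift. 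Its marginal $\rho_t^\varepsilon$ solves the $\varepsilon$-regularized version of \eqref{Sindensityfun}.

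The heart of the proof is a \emph{uniform-in-$\varepsilon$} bound $\|\rho_t^\varepsilon\|_{L^\infty(D)}\le M$ on a time interval $[0,T]$, with $T$ and $M$ depending only on $\|\rho_0\|_{L^\infty(D)}$, $D$, $\nu$, $d$. Two ingredients feed into it. First, although $F\notin L^1(\R^d)$ globally, its singularity $|F(x)|\sim|x|^{-(d-1)}$ is locally integrable, so uniformly in $\varepsilon$ one has $\|F_\varepsilon\ast g\|_{L^\infty}\le \|F\|_{L^1(B_{\mathrm{diam}\,D})}\|g\|_{L^\infty}$, and the regularized drift is dominated by $C\|\rho_s^\varepsilon\|_{L^\infty}$ with $C$ independent of $\varepsilon$. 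Second, writing the Fokker--Planck equation for $\rho_t^\varepsilon$ in Duhamel form against the Neumann heat semigroup $e^{t\nu\Delta_N}$ on $D$ and invoking the standard gradient estimate $\|\nabla_x p_N(t,x,\cdot)\|_{L^1(D)}\le C t^{-1/2}$ valid on convex domains, one obtains
\[
\|\rho_t^\varepsilon\|_{L^\infty(D)}\le \|\rho_0\|_{L^\infty(D)}+C\int_0^t (t-s)^{-1/2}\,\|\rho_s^\varepsilon\|_{L^\infty(D)}^2\ds \,,
\]
whose kernel singularity is integrable; a bootstrap then closes $\|\rho_t^\varepsilon\|_{L^\infty(D)}\le M$ on a short $[0,T]$, uniformly in $\varepsilon$. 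Finally I would pass to the limit $\varepsilon\to0$: the uniform drift bound, the uniformly bounded variation of the reflection, and the compactness of $\bar D$ give tightness of the laws of $(Y^\varepsilon,\tilde R^\varepsilon)$ in path space, so along a subsequence they converge in law to a solution $(Y,\tilde R)$ of \eqref{SDE'}; simultaneously $\rho^\varepsilon\rightharpoonup\rho$ weak-$\ast$ in $L^\infty$, upgraded to strong $L^p$ convergence by an Aubin--Lions argument using the time-regularity from the PDE, which lets one pass to the limit in the nonlinear term via $\rho^\varepsilon\,F_\varepsilon\ast\rho^\varepsilon\to\rho\,F\ast\rho$, the local integrability of $F$, and the convergence $F_\varepsilon\to F$ in $L^1_{\mathrm{loc}}$. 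Strong existence follows once pathwise uniqueness is in hand, which I would prove by a coupling of two solutions driven by the same Brownian motion and an Osgood-type inequality reflecting that $F\ast\rho$ is only log-Lipschitz for $\rho\in L^\infty$.

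The step I expect to be the main obstacle is the uniform $L^\infty$ density estimate, and within it the interaction of the singular force with the reflecting boundary: the whole bound rests on the gradient estimate for the Neumann heat kernel, where convexity of $D$ is essential, just as it is for the Lipschitz regularity of the Skorokhod map underlying the construction of $Y^\varepsilon$. A secondary difficulty is the limit passage in the nonlinear term, where one must exclude concentration of $\rho^\varepsilon$ --- ruled out precisely by the short-time $L^\infty$ bound --- so that the product converges to $\rho\,F\ast\rho$ despite the singularity of $F$.
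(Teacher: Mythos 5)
Your outline is viable and shares the paper's overall scaffolding (regularize $F$, prove a uniform-in-$\varepsilon$ short-time $L^\infty$ bound on the densities, exploit convexity of $D$ for the reflection term, and use the log-Lipschitz/Osgood structure of $F\ast\rho$), but it diverges from the paper's proof at two substantive points. First, for the uniform $L^\infty$ bound the paper does not use a Duhamel representation against the Neumann heat semigroup; it runs an $L^p$ energy estimate on a doubly regularized PDE \eqref{RPDE1}, integrates by parts using the no-flux boundary condition, applies Lemma \ref{L1}(a) and Cauchy--Schwarz to get $\frac{d}{dt}\norm{\rho^{\varepsilon,\delta}_t}_{L^p}^p\le Cp\norm{\rho^{\varepsilon,\delta}_t}_{L^\infty}^2\norm{\rho^{\varepsilon,\delta}_t}_{L^p}^p$ with $C$ independent of $p$, sends $p\to\infty$, and closes via the Riccati-type Lemma \ref{L2-1}; this is more elementary and avoids having to justify the gradient bound $\norm{\nabla_x p_N(t,x,\cdot)}_{L^1}\lesssim t^{-1/2}$ on a merely $C^1$ convex domain, which you assert without proof. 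Second, and more importantly, your passage to the limit goes through tightness, convergence in law, Aubin--Lions, and then an upgrade to strong existence via pathwise uniqueness; for a \emph{reflected McKean--Vlasov} equation this last step is a nontrivial Yamada--Watanabe-type argument that you leave implicit. The paper sidesteps compactness entirely: it first builds, for each $\varepsilon$, a process $Y^\varepsilon_t$ whose marginal is the PDE solution $\rho^\varepsilon_t$ (Proposition \ref{relation1}, via a frozen-drift linearization and uniqueness of the linear PDE), and then proves the family $\{Y^\varepsilon_t\}$ is Cauchy in $\sup_t\PP\mbox{-ess sup}$ by applying It\^{o}'s formula to $|Y^\varepsilon_t-Y^{\varepsilon'}_t|^2$, the good sign \eqref{goodsign}--\eqref{goodsign1} from convexity, the quasi-log-Lipschitz Lemma \ref{QLe}, and the Osgood-type Lemma \ref{L2}, yielding the quantitative rate $C_T\omega(\varepsilon)^{\frac12 e^{-CT}}$. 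This gives a pathwise (strong) limit directly, with an explicit convergence rate that is reused later in the propagation-of-chaos proof, whereas your route would produce a weak solution first and only qualitative convergence. If you pursue your version, the step needing the most care is precisely the identification of the strong limit; the paper's Cauchy-sequence coupling is designed to make that step unnecessary.
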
 

One main tool we use in establishing the above well-posedeness result is the Wasserstein distance. For the convenience of the reader, we give a brief introduction on the topology of the $p$-Wasserstein space.  We refer readers to \cite{ambrosio2008gradient,villani2008optimal} for further background.  Let us denote all probability measures on $D$ by $\mathcal{P}(D)$, then for $p\geq 1$, consider the space
\begin{eqnarray}\label{p1d}
\mathcal{P}_p(D)=\left\{f\in \mathcal{P}(D):~\int_{D}|x|^p \, {\rm d}f(x)<\infty\right\},
\end{eqnarray}
on which we will define our distances. 

\begin{definition}\label{Wpdistance}
	\textbf{($p$-Wasserstein  distance)} Let $p\in\mathbb{N}$ and $f$, $g\in \mathcal{P}_p(D)$. The $p$-Wasserstein  distance between $f$ and $g$ is defined by
		\begin{align} \label{def:W-p}
		W_p(f,g)
		= \vpran{\inf_{\pi\in\Lambda(f,~g)}\Big\{\int_{D\times D}|x-y |^p \dpi(x,y)\Big\}}^{1/p}
		= \vpran{\inf_{X\sim f,Y\sim g}\Big\{\EE[|X-Y|^p]\Big\}}^{1/p}.
		\end{align}
		where $\Lambda(f,~g)$ is the set of joint probability measures on $D\times D$ with marginals $f$ and $g$ respectively and $(X,Y)$ are all possible couples of random variables with $f$ and $g$ as their respective distributions.  
\end{definition}

\begin{definition}\textbf{($\infty$-Wasserstein distance)} Let $f$, $g\in \mathcal{P}_\infty(D)$, then the  $\infty$-Wasserstein distance between $f$ and $g$ is defined by
		\begin{equation}\label{Winf}
		W_\infty(f,g):=\inf_{X\sim f,Y\sim g}\PP\mbox{-ess sup }|X-Y|=\inf_{\pi\in \Lambda(f, \, g)}\pi\mathop{\mbox{-ess sup }}\limits_{(x,y)\in D\times D}|x-y| \,,
		\end{equation}	
		where random variables $X$,  $Y$ and joint distribution $\Lambda(f,~g)$ are used in Definition \ref{Wpdistance}. Here
		\begin{align}\label{pess}
		\PP\mbox{-ess sup }|X-Y|:=\inf\big \{\lambda\geq 0: ~\PP(|X-Y|>\lambda)=0\big\}\,,
		\end{align}
		and
		\begin{align}\label{piess}
		\pi \mathop{\mbox{-ess sup }}\limits_{(x,y)\in D\times D} |x - y|:=\inf\big \{\lambda\geq 0: ~\pi(\{(x,y)\in D\times D: |x-y|> \lambda\})=0\big\}\,.
		\end{align}
\end{definition}
The definitions of $\PP$-ess sup and $\pi\mathop{\mbox{-ess sup }}$ will be used in the proof of Lemma \ref{QLe}. As a slight abuse of notation, we also use $W_p(\rho_1,\rho_2)$ to denote the Wasserstein distance  of two measures  $f,g$ with densities $\rho_1,\rho_2$ respectively. In \cite[Theorem 6.18]{villani2008optimal}, it has been shown that for any $1 \leq p <\infty$, the space $\mathcal{P}_p(D)$ endowed with the metric $W_p$ is a complete metric space.  

Using the Wasserstein distance, we will establish the Dobrushin's type stability for \eqref{SDE'} and \eqref{PDE}. The result is given by the following theorem.

\begin{thm}\label{mainthm1}{\textbf{(Stability and Uniqueness)}}
	\Ni (1) If $\{Y_t^i\}_{i=1,2}$ are two strong solutions to \eqref{SDE'} with initial data $\{Y_0^i\}_{i=1,2}$ and densities $\{\rho_t^i\}_{i=1,2}$ respectively. Then there exist some constants $C_1$, $C_2$ depending only on $D$, $T$, $\|\rho^1_t\|_{L^\infty\left(0, T; L^\infty(D)\right)}$ and $\|\rho^2_t\|_{L^\infty\left(0, T; L^\infty(D)\right)}$ such that 
	\begin{align}\label{stab1}
	\sup\limits_{t\in{[0,T]}}\PP\mbox{-ess sup }|{Y}^1_t-{Y}^2_t| 
	\leq 
	C_1 \max \left\{\PP\mbox{-ess sup }|{Y}^1_0-{Y}^2_0|,  \,\, (\PP\mbox{-ess sup }|{Y}^1_0-{Y}^2_0|)^{\exp(-C_2T)}\right\} \,.
	\end{align}
	
	\Ni (2) Let $\{\rho^i_t\}_{i=1,2} \in L^\infty\left(0, T; L^\infty(D)\right)$ be two weak solutions to \eqref{PDE} with initial data $\{\rho_0^i\}_{i=1,2}$ respectively.  Then there exist some constants $C_1$, $C_2$ depending only on $D$, $T$, $\|\rho^1_t\|_{L^\infty\left(0, T; L^\infty(D)\right)}$ and $\|\rho^2_t\|_{L^\infty\left(0, T; L^\infty(D)\right)}$ such that 
	\begin{align}\label{stab2}
	\sup\limits_{t\in{[0,T]}}W_\infty(\rho_t^1,\rho_t^2) 
	\leq 
	C_1 \max \left\{W_\infty(\rho_0^1,~\rho_0^2),  \,\, (W_\infty(\rho_0^1,~\rho_0^2))^{\exp(-C_2T)}\right\}.
	\end{align}
	Uniqueness then follows from the stability estimates.
\end{thm}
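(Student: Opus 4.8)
The plan is to establish the stochastic estimate \eqref{stab1} first, via a synchronous coupling, and then transfer it to the PDE estimate \eqref{stab2} using the correspondence between solutions of \eqref{SDE'} and \eqref{PDE} furnished by Theorem~\ref{mainthm}. For part (1), I would drive the two strong solutions $Y_t^1$ and $Y_t^2$ by the \emph{same} Brownian motion $B_t$ and take the joint law of $(Y_0^1,Y_0^2)$ to be an optimal coupling for $W_\infty(\rho_0^1,\rho_0^2)$, so that $\PP\text{-ess sup}\,|Y_0^1-Y_0^2|=W_\infty(\rho_0^1,\rho_0^2)$. In the equation for the difference $Y_t^1-Y_t^2$ the noise cancels identically, leaving a process of finite variation, so applying the chain rule to $|Y_t^1-Y_t^2|^2$ requires no Itô correction.

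Two contributions must then be controlled. The reflection terms contribute
\[
-2\int_0^t\langle Y_s^1-Y_s^2,\,n(Y_s^1)\rangle\,\mathrm{d}|\tilde R^1|_s+2\int_0^t\langle Y_s^1-Y_s^2,\,n(Y_s^2)\rangle\,\mathrm{d}|\tilde R^2|_s,
\]
and here the convexity of $D$ is decisive: whenever $Y_s^1\in\partial D$ one has $\langle n(Y_s^1),\,Y_s^1-Y_s^2\rangle\ge0$ since $Y_s^2\in\bar D$, and symmetrically for $Y_s^2$, so this whole contribution is $\le0$ and may be discarded. The drift difference I would split as
\[
\int_D[F(Y_s^1-y)-F(Y_s^2-y)]\rho_s^1(y)\,\mathrm{d}y+\int_D F(Y_s^2-y)[\rho_s^1(y)-\rho_s^2(y)]\,\mathrm{d}y.
\]
The first (diagonal) integral is bounded by the log-Lipschitz modulus of the Newtonian force against the $L^\infty$ bound on $\rho_s^1$, giving order $\|\rho_s^1\|_{L^\infty}\,\omega(|Y_s^1-Y_s^2|)$ with $\omega(r)=r(1-\ln r)$. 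For the second (off-diagonal) integral I would write $\rho_s^i$ as the law of an independent copy $(\bar Y_s^1,\bar Y_s^2)$ of the coupled pair and bound $|\EE[F(Y_s^2-\bar Y_s^1)-F(Y_s^2-\bar Y_s^2)]|$ by $\|\rho^i\|_{L^\infty}\,\omega(\PP\text{-ess sup}\,|\bar Y_s^1-\bar Y_s^2|)$. These are exactly the singular-force estimates prepared in Section~\ref{sec:prelim}.

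Setting $Z_t=\PP\text{-ess sup}\,|Y_t^1-Y_t^2|$ and taking the essential supremum over paths in the resulting pathwise inequality (using that $\omega$ is increasing near $0$, so both moduli are dominated by $\omega(Z_s)$), I expect to reach
\[
Z_t^2\le Z_0^2+C\int_0^t Z_s\,\omega(Z_s)\,\mathrm{d}s=Z_0^2+C\int_0^t Z_s^2(1-\ln Z_s)\,\mathrm{d}s.
\]
This is an Osgood-type integral inequality whose modulus $u\mapsto u(1-\tfrac12\ln u)$ is non-Lipschitz but Osgood-integrable; differentiating $-\ln(-\ln Z_t^2)$ integrates it to precisely the double-exponential form $Z_t\le C_1\max\{Z_0,\,Z_0^{\exp(-C_2t)}\}$, the maximum absorbing the regime $Z_0\ge1$. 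This gives \eqref{stab1}. For part (2), given two weak solutions $\rho_t^1,\rho_t^2$ of \eqref{PDE}, Theorem~\ref{mainthm} realizes each as the density of a solution $Y_t^i$ of \eqref{SDE'}; coupling these as above and invoking $W_\infty(\rho_t^1,\rho_t^2)\le\PP\text{-ess sup}\,|Y_t^1-Y_t^2|$ together with the optimal initial coupling transfers \eqref{stab1} into \eqref{stab2}, and uniqueness follows upon setting $\rho_0^1=\rho_0^2$.

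\textbf{The principal obstacle} will be the off-diagonal drift estimate, namely controlling the singular integral $\int_D F(Y_s^2-y)[\rho_s^1-\rho_s^2](y)\,\mathrm{d}y$ by the $W_\infty$ distance using only the $L^\infty$ bounds on the densities: this is where the Newtonian singularity is genuinely felt, and where the log-Lipschitz modulus (rather than a clean Lipschitz bound) forces the Osgood argument and hence the double-exponential rate. A secondary technical point is making the passage from the pathwise Itô inequality to the deterministic inequality for $Z_t$ rigorous, since the off-diagonal term couples the individual trajectory to the global essential supremum.
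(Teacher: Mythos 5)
Your proposal is correct and follows essentially the same route as the paper: synchronous coupling, discarding the reflection terms by convexity of $D$, the Kato-type log-Lipschitz control of the Newtonian drift (the paper packages your diagonal/off-diagonal split into the single Lemma~\ref{QLe}, stated directly in terms of the joint law and $\PP$-ess sup), the Osgood integration of $Z_t^2 \le Z_0^2 + C\int_0^t Z_s\,\omega(Z_s)\,\mathrm{d}s$, and the transfer to \eqref{stab2} via the SDE representation of weak solutions with an optimal initial $W_\infty$-coupling. The one place you are looser than the paper is the regime where $Z_0$ is not small: the paper runs an explicit three-case analysis (including the case where $Z_t$ crosses the smallness threshold at an intermediate time $t_0$, after which the Osgood bound is restarted), which is what actually produces the $\max$ in \eqref{stab1}--\eqref{stab2}, but this is a routine elaboration of your remark that the maximum absorbs the large-data regime.
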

\begin{rmk}
	This Dobrushin's type stability has been obtained for the KS equation in whole space \cite{huang2016well,liu2017random}. In a domain with boundaries, one has to deal with an extra term resulting from the boundary effect. The key observation is that due to the convexity of the domain, this extra term possesses a good sign (see in \eqref{goodsign} and \eqref{goodsign1}), and thus it does not add essential difficulties to the stability estimates. Such observation has been used in earlier works \cite{choi2016propagation,carrillo2014nonlocal,fernandez2016uniqueness}.
	
\end{rmk}

Our second main theorem is to show the propagation of chaos and justify the mean-field limit of the regularized particle system \eqref{Rparticle}. First, we recall the definition of a chaotic sequence \cite{kac1956foundations,sznitman1991topics}:
\begin{definition}\textbf{($\rho$-chaotic)}
	Let $E$ be a Polish space and $\rho$ a probability measure on $E$. Let $\{X^i\}_{i=1}^N$ be a sequence of exchangeable random variables. Then $\{X^i\}_{i=1}^N$ is $\rho$-chaotic if 
	\begin{align} \label{def:rho-chaotic}
	\mu^N
	:=\frac{1}{N}\sum\limits_{i=1}^N\delta_{X^i}\rightarrow \rho \,\, \mbox{ as measures} \quad \text{when} \, \, N\rightarrow\infty \,,
	\end{align}
	that is, for any $\phi \in C_b(D)$, 
	\begin{align*}
	\int_D \phi \dmu^N \to \int_{D} \phi \rho \dx
	\qquad
	\text{in law} \,.
	\end{align*}
\end{definition}
A sufficient condition for the sequence $\{X^i\}_{i=1}^N$ to be $\rho$-chaotic is 
\begin{proposition}{\cite[Remark 1.4]{choi2016propagation}}
	Let $\rho \in \mathcal{P}_p(E)$ with $p \geq 1$. Let $\mu^N$ be the empirical measure defined in~\eqref{def:rho-chaotic}. If 
	\begin{equation*}
	\EE\left[W_p(\mu^N,\rho)\right]\rightarrow0\mbox{ as }N\rightarrow\infty\,,
	\end{equation*}
	then the sequence $\{X^i\}_{i=1}^N$ is $\rho$-chaotic.
\end{proposition}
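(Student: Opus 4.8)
The plan is to deduce the ``in law'' convergence of $\int_D \phi \dmu^N$ from the hypothesis $\EE[W_p(\mu^N,\rho)]\to 0$, using that the $p$-Wasserstein distance controls weak convergence on the bounded domain $D$. Since the target $\int_D \phi \rho \dx$ is a deterministic constant, convergence in law is equivalent to convergence in probability, which is the cleanest object to work with; the whole argument is therefore ``soft'' and measure-theoretic in nature.

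First I would pass from convergence of the expectation to convergence in probability of the distance itself. By Markov's inequality, for every $\delta > 0$,
\begin{equation*}
\PP\bigl(W_p(\mu^N,\rho) > \delta\bigr) \leq \frac{1}{\delta}\,\EE\bigl[W_p(\mu^N,\rho)\bigr] \longrightarrow 0 \qquad \text{as } N \to \infty \,,
\end{equation*}
so that $W_p(\mu^N,\rho) \to 0$ in probability.

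Next I would fix a test function $\phi \in C_b(D)$ and consider the functional $G\colon \mathcal{P}_p(D)\to\RR$ given by $G(\mu)=\int_D \phi \dmu$. The key analytic input is that convergence in the metric $W_p$ implies weak convergence of measures; since $D$ is bounded, all $p$-th moments are automatically finite and uniformly bounded, so by \cite[Theorem 6.9]{villani2008optimal} convergence in $W_p$ is in fact equivalent to weak convergence. Hence $G$ is continuous on $\bigl(\mathcal{P}_p(D),W_p\bigr)$: if $\mu_n\to\mu$ in $W_p$, then $\int_D \phi \, {\rm d}\mu_n \to \int_D \phi \dmu$ directly from the definition of weak convergence. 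A self-contained alternative that avoids invoking the full equivalence is to treat first a Lipschitz $\phi$ via Kantorovich--Rubinstein duality together with $W_1 \leq W_p$, giving $\abs{\int_D \phi \dmu^N - \int_D \phi \rho \dx} \leq \mathrm{Lip}(\phi)\,W_p(\mu^N,\rho)$, and then to approximate a general continuous bounded $\phi$ uniformly by Lipschitz functions; this already yields an $L^1$ (hence in-probability) bound for Lipschitz test functions.

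Finally, I would invoke the continuous mapping theorem for convergence in probability: since $W_p(\mu^N,\rho)\to 0$ in probability and $G$ is continuous at $\rho$, it follows that $G(\mu^N)=\int_D \phi \dmu^N \to G(\rho)=\int_D \phi \rho \dx$ in probability. Concretely this proceeds through the subsequence principle: from any subsequence one extracts a further subsequence along which $W_p(\mu^N,\rho)\to 0$ almost surely, whence $G(\mu^N)\to G(\rho)$ almost surely by continuity; as the limit is the same deterministic constant along every such sub-subsequence, the whole sequence converges in probability. Convergence in probability to a constant implies convergence in law to that constant, and since $\phi \in C_b(D)$ was arbitrary, this is precisely the assertion $\mu^N \to \rho$ as measures, i.e. that $\{X^i\}_{i=1}^N$ is $\rho$-chaotic. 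I expect no serious obstacle: the only point requiring care is the passage from the expectation bound to the ``in law'' statement, which is handled by Markov's inequality combined with the continuity argument above, while the genuine analytic content---that $W_p$ dominates weak convergence---is standard and already available from \cite{villani2008optimal}.
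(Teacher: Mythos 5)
Your argument is correct: Markov's inequality upgrades the hypothesis to convergence in probability of $W_p(\mu^N,\rho)$, continuity of $\mu \mapsto \int_D \phi \dmu$ with respect to $W_p$ (via the standard fact that $W_p$-convergence implies weak convergence, or via Kantorovich--Rubinstein for Lipschitz $\phi$) combined with the subsequence principle gives convergence in probability of $\int_D \phi \dmu^N$ to the constant $\int_D \phi \rho \dx$, and convergence in probability to a constant implies convergence in law. The paper itself offers no proof of this proposition --- it is imported verbatim from \cite[Remark 1.4]{choi2016propagation} --- so there is nothing to compare against; your write-up is the standard argument and fills that gap correctly.
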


The result of propagation of chaos states
\begin{thm}\label{mainthm2}{\textbf{(Propagation of chaos)}}
	Let $\rho_t \in L^\infty\left(0, T; L^\infty(D)\right)$ be a weak solution to equation \eqref{PDE} with initial data $\rho_0\in L^\infty(D)$. Assume that $\{X_0^i\}_{i=1}^N$ are $N$ i.i.d. random variables with the common density $\rho_0$ and $\{X_t^{i,\varepsilon}\}_{i=1}^N$ satisfy the particle system \eqref{Rparticle} with the initial data $\{X_0^i\}_{i=1}^N$. Let $\mu^{X, \Eps}_t$ be the associated empirical measure given by
	\begin{equation*}
	\mu_t^{X,\varepsilon}:=\frac{1}{N}\sum\limits_{i=1}^N\delta_{X_t^{i,\varepsilon}} \,.
	\end{equation*}
	Then there exist a cut-off parameter $\varepsilon\sim \log^{-\frac{1}{d}}(N)$ and  constants $C_1,C_2, N_0>0$ depending only on $\norm{\rho_0}_{L^\infty(D)}$, $D$ and $T$ such that
	\begin{align*}
	\sup\limits_{t\in[0,T]}\EE\left[W_2(\mu_t^{X,\varepsilon},\rho_t)\right]
	\leq C_1[\omega(\log^{-\frac{1}{d}}(N))]^{\exp(-C_2T)}
	\qquad
	\text{ for all $N\geq N_0$}\,,
	\end{align*}
	where $\omega(x)$ is defined in~\eqref{Lipfunction1}.
\end{thm}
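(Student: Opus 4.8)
The plan is to interpose the mean-field auxiliary system between the true particle system and the target density $\rho_t$, and then to combine a coupling estimate with a law-of-large-numbers estimate in Wasserstein distance. Concretely, I would introduce $N$ copies $\{Y_t^i\}_{i=1}^N$ of the self-consistent process \eqref{SDE'}, driven by the \emph{same} Brownian motions $\{B_t^{i,\varepsilon}\}$ and the \emph{same} initial data $\{X_0^i\}$ as the regularized particles $\{X_t^{i,\varepsilon}\}$, each with common marginal density $\rho_t$. By the triangle inequality for $W_2$, together with the fact that the empirical measure $\mu_t^{Y}:=\frac1N\sum_i\delta_{Y_t^i}$ has the same law structure as $N$ i.i.d.\ samples from $\rho_t$,
\begin{align*}
\EE\left[W_2(\mu_t^{X,\varepsilon},\rho_t)\right]
\leq \EE\left[W_2(\mu_t^{X,\varepsilon},\mu_t^{Y})\right]
+\EE\left[W_2(\mu_t^{Y},\rho_t)\right].
\end{align*}
The second term is handled by a standard quantitative law of large numbers in Wasserstein distance (the $Y_t^i$ being i.i.d.\ with bounded density $\rho_t$), which decays in $N$ at a rate depending on the dimension; this contributes a lower-order error that can be absorbed once $\varepsilon$ is tied to $N$.

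For the first term I would control the pathwise discrepancy $\EE\big[\sup_{s\leq t}|X_s^{i,\varepsilon}-Y_s^i|^2\big]$, since $W_2(\mu_t^{X,\varepsilon},\mu_t^{Y})^2\leq\frac1N\sum_i|X_t^{i,\varepsilon}-Y_t^i|^2$. Subtracting \eqref{SDE'} from \eqref{Rparticle}, the Brownian terms cancel by the shared-noise coupling, and I split the drift difference into three pieces: the reflection terms $R_t^{i,\varepsilon}-\tilde R_t^i$, the difference between the regularized pairwise force $F_\varepsilon$ and its convolution against $\rho_s$, and the law-of-large-numbers fluctuation of the empirical drift around its mean. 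The reflection contribution is the crucial boundary term: applying It\^o's formula to $|X_s^{i,\varepsilon}-Y_s^i|^2$, the reflection produces boundary integrals whose integrand has a favorable sign precisely because $D$ is convex (the same mechanism noted in the remark after Theorem \ref{mainthm1}, cf.\ \eqref{goodsign}), so these terms can be dropped. The regularization error is governed by the modulus of continuity $\omega$ of the singular kernel together with the stability estimate \eqref{stab2}, which supplies the doubly-exponential Osgood-type factor $[\,\cdot\,]^{\exp(-C_2T)}$; the fluctuation term is estimated by the boundedness of $\rho_t$ and a Grönwall argument in the Osgood (not Lipschitz) form, reflecting that $F$ is only log-Lipschitz after regularization.

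The main obstacle I expect is the simultaneous handling of the singularity of $F$ and the finite-$N$ fluctuations. Because $F_\varepsilon$ is only Lipschitz with constant blowing up like a negative power of $\varepsilon$, a naive Grönwall inequality gives a constant $e^{CT/\varepsilon^{\beta}}$ that would swamp the law-of-large-numbers gain unless $\varepsilon$ is chosen to decay only logarithmically in $N$; hence the calibration $\varepsilon\sim\log^{-1/d}(N)$. The delicate point is to run the estimate so that the concentration of the empirical force (controlling events where regularized and singular dynamics diverge, and ensuring no pair of particles has collided — here the finite-time collision bound, Theorem \ref{collison}, and the remark that \eqref{Rparticle} coincides with \eqref{particle} before collision are used) is compatible with the Osgood-type modulus $\omega$, so that the final bound closes with the stated exponent $\exp(-C_2T)$ and the regularization error $\omega(\log^{-1/d}(N))$ dominates. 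Balancing the fluctuation rate against the regularization scale to produce a single clean bound uniform on $[0,T]$ is the technical heart of the argument.
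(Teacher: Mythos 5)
Your skeleton is right in several respects: synchronous coupling with mean-field copies driven by the same Brownian motions and initial data, dropping the reflection terms by convexity as in \eqref{goodsign}, a Wasserstein law of large numbers for the i.i.d.\ copies, and the logarithmic calibration $\varepsilon\sim\log^{-1/d}(N)$ to beat the Gr\"onwall factor $e^{CT\varepsilon^{-d}}$. But there is a structural gap in your decomposition. You interpose only the \emph{non-regularized} mean-field copies $Y_t^i$ of \eqref{SDE'} and propose to control $\EE\bigl[\sup_i|X_t^{i,\varepsilon}-Y_t^i|^2\bigr]$ in a single coupling estimate that carries simultaneously the particle-fluctuation term and the regularization error $F_\varepsilon\ast\rho-F\ast\rho=O(\omega(\varepsilon))$. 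These two errors cannot live in the same Gr\"onwall loop: comparing the empirical drift with a mean-field drift forces the crude Lipschitz bound $\|\nabla F_\varepsilon\|_\infty\lesssim\varepsilon^{-d}$ (the quasi-log-Lipschitz Lemma \ref{QLe} is unavailable there because the empirical measure of the particles has no $L^\infty$ density), so Gr\"onwall produces the factor $e^{CT\varepsilon^{-d}}\sim N^{c}$ with $c>0$. The fluctuation source $\varepsilon^{-(d-1)}/\sqrt{N-1}$ survives this amplification, but the regularization source $\omega(\varepsilon)\sim\omega(\log^{-1/d}N)$ decays only logarithmically in $N$ and is destroyed by it: $\omega(\varepsilon)\,e^{CT\varepsilon^{-d}}\to\infty$. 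Your own two statements, that the fluctuation is handled by an Osgood-form Gr\"onwall and that a naive Gr\"onwall gives $e^{CT/\varepsilon^\beta}$, are in tension at exactly this point.

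The paper resolves this with a three-term decomposition through the \emph{regularized} mean-field copies $Y_t^{i,\varepsilon}$ of \eqref{RSDE}: the leg $X^{i,\varepsilon}$ versus $Y^{i,\varepsilon}$ contains only the fluctuation error and uses the $\varepsilon^{-d}$-Lipschitz Gr\"onwall (Lemma \ref{lmconsist}), yielding $N^{-1/8}\log N$ after the choice of $\varepsilon$, while the leg $Y^{i,\varepsilon}$ versus $Y^{i}$ compares two dynamics whose laws both have bounded densities, so Lemma \ref{QLe} and the Osgood-type Lemma \ref{L2} apply and give $\omega(\varepsilon)^{\exp(-CT)}$ with no $e^{\varepsilon^{-d}}$ factor; this is the estimate \eqref{part2} already established in the proof of Theorem \ref{mainthm}, not the initial-data stability \eqref{stab2} that you cite. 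Two smaller corrections: the theorem concerns the regularized system \eqref{Rparticle}, which is globally well-posed by Theorem \ref{choithm}, so no appeal to the collision-time bound of Theorem \ref{collison} is needed or made; and the final term $W_2(\mu^Y_t,\rho_t)$ is the Fournier--Guillin rate, which is indeed of lower order than $[\omega(\log^{-1/d}N)]^{\exp(-C_2T)}$.
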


\begin{rmk}
	In the theorem above, we assume that the initial data $\{X_0^i\}_{i=1}^N$ are i.i.d. random variables with the common density $\rho_0$. This is a stronger assumption than $\{X_0^i\}^N_{i=1}$ being $\rho_0$-chaotic. Our theorem shows that under this stronger assumption on the initial data, the system remains chaotic for a finite time. 
\end{rmk}

\section{Preliminaries} \label{sec:prelim}
In this section we collect the technical lemmas that are used in the proofs of the main theorems. Throughout this paper, we will denote any generic constants as $C$, which may change from line to line. 
The notation $\norm{f}_{L^p(D)}$ represents the usual $L^p(D)$-norm for any $1\leq p\leq \infty$. We may suppress the dependence on $D$ when there is no confusion.  

The first lemma contains some useful properties of the regularized force $F_\Eps$, which states
\begin{lem} \cite{RY}\label{lmkenerl} 
	Let $F_\Eps$ be the regularized force defined in~\eqref{def:F-Eps}. Then
	\\
	\Ni (a)
	$F_\varepsilon(-x)=-F_\varepsilon(x)$, $F_\varepsilon(x)=F(x)$ for any $|x|\geq \varepsilon$, and $|F_\varepsilon(x)|\leq |F(x)|$ for any $x \in \R^d$.
	
	\Ni (b)
	$|\partial^\beta F_\varepsilon(x)|\leq C_\beta\,\varepsilon^{1-d-|\beta|}$ for any $x\in\mathbb{R}^d$ and $|\beta| \geq 0$.
	
\end{lem}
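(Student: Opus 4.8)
The plan is to dispatch the three assertions in (a) and the derivative bounds in (b) by exploiting, respectively, the parity of $F$, the radial structure of the mollified Newtonian potential, and the scaling of $F$. Throughout I use that the blob $J$ is radially symmetric (in particular even); this is the standard choice for the mollifier and is exactly what makes $F_\varepsilon$ odd and what yields the precise identity $F_\varepsilon = F$ outside $B(0,\varepsilon)$. For the oddness in (a) I would simply change variables: since $F(-z)=-F(z)$ and $J_\varepsilon(-y)=J_\varepsilon(y)$, writing $F_\varepsilon(-x)=\int_{\R^d}J_\varepsilon(y)F(-x-y)\dy$ and substituting $y\mapsto -y$ gives $F_\varepsilon(-x)=-\int_{\R^d}J_\varepsilon(y)F(x-y)\dy=-F_\varepsilon(x)$.

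For the remaining two claims in (a) — the identity on $\{|x|\ge\varepsilon\}$ and the global bound $|F_\varepsilon|\le|F|$ — the plan is to pass to the potential. Set $\Phi_\varepsilon := J_\varepsilon\ast\Phi$, so that $F_\varepsilon=-\nabla\Phi_\varepsilon$; since $\Phi\in L^1_{\mathrm{loc}}$ and $J_\varepsilon\in C_c^\infty$, the function $\Phi_\varepsilon$ is smooth and solves $\Delta\Phi_\varepsilon=J_\varepsilon\ast\Delta\Phi=J_\varepsilon$ classically, whence $\nabla\cdot F_\varepsilon=-J_\varepsilon$. Because $\Phi$ and $J_\varepsilon$ are both radial, so is $\Phi_\varepsilon$, and hence $F_\varepsilon$ is a radial field. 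Applying the divergence theorem on $B(0,r)$ and using $|\partial B(0,r)|=d\alpha_d r^{d-1}$, I obtain $|F_\varepsilon(x)|=\frac{M(|x|)}{d\alpha_d|x|^{d-1}}$, where $M(r):=\int_{B(0,r)}J_\varepsilon(y)\dy$ is the enclosed mass. Since $M(r)\le\int_{\R^d}J_\varepsilon(y)\dy=1$ for every $r$ and $\frac{1}{d\alpha_d}=C_*$, this already gives $|F_\varepsilon(x)|\le \frac{C_*}{|x|^{d-1}}=|F(x)|$; and since $\operatorname{supp}J_\varepsilon\subset B(0,\varepsilon)$ forces $M(r)=1$ for $r\ge\varepsilon$, the same formula returns $F_\varepsilon(x)=-C_* x/|x|^d=F(x)$ whenever $|x|\ge\varepsilon$.

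For (b) I would use the homogeneity of $F$. From $F(\lambda z)=\lambda^{1-d}F(z)$ and a change of variables one checks the scaling identity $F_\varepsilon(x)=\varepsilon^{1-d}G(x/\varepsilon)$ with $G:=J\ast F$; differentiating yields $\partial^\beta F_\varepsilon(x)=\varepsilon^{1-d-|\beta|}(\partial^\beta G)(x/\varepsilon)$, so the claim reduces to the $\varepsilon$-independent bound $\|\partial^\beta G\|_{L^\infty}\le C_\beta$. This in turn follows from $\partial^\beta G=(\partial^\beta J)\ast F$ together with the local integrability and decay of $F$ (namely $|F(z)|=C_*|z|^{1-d}$) and the uniform estimate $\sup_x\int_{B(x,1)}|z|^{1-d}\ud z<\infty$, which bounds the convolution pointwise by $\|\partial^\beta J\|_{L^\infty}$ times a finite constant.

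The step I expect to be least routine is the global bound $|F_\varepsilon|\le|F|$ in (a): a naive estimate by $\|J_\varepsilon\|_{L^1}\|F\|_{L^\infty}$ is useless since $F$ is unbounded, so the radial structure of $\Phi_\varepsilon$ (Newton's shell theorem) is essential, and it is this same structure that simultaneously delivers the exact identity for $|x|\ge\varepsilon$. The only technical point to verify carefully is that the singularity of $F$ at the origin causes no trouble in the divergence-theorem computation, which is guaranteed because $\Phi_\varepsilon$ is a genuine smooth solution of $\Delta\Phi_\varepsilon=J_\varepsilon$ and $F$ is locally integrable, so the flux identity holds on every ball.
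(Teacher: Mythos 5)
The paper does not prove this lemma at all: it is stated with a citation to \cite{RY}, so there is no in-paper argument to compare yours against. On its own terms your proof is correct and complete. The shell-theorem computation is the right way to get the two nontrivial claims of (a): writing $F_\varepsilon=-\nabla(J_\varepsilon\ast\Phi)$, using $\Delta(J_\varepsilon\ast\Phi)=J_\varepsilon$ (with the paper's normalization $\Delta\Phi=\delta_0$ and $C_*=\tfrac{1}{d\alpha_d}$), and reading off $|F_\varepsilon(x)|=C_*M(|x|)/|x|^{d-1}$ from the flux identity gives both $|F_\varepsilon|\le|F|$ and $F_\varepsilon=F$ on $\{|x|\ge\varepsilon\}$ in one stroke; your remark that a crude $L^1$--$L^\infty$ bound cannot work here is exactly right. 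The scaling identity $F_\varepsilon(x)=\varepsilon^{1-d}(J\ast F)(x/\varepsilon)$ together with $\sup_z\int_{B(z,1)}|w|^{1-d}\ud w<\infty$ correctly yields (b). The one caveat worth flagging: every claim in (a) genuinely requires $J$ to be radially symmetric (evenness alone does not suffice for $J_\varepsilon\ast F=F$ on $\{|x|\ge\varepsilon\}$, since reproducing all harmonic functions under convolution needs the full radial mean-value property), yet the paper's stated hypotheses on $J$ list only smoothness, nonnegativity, compact support and unit mass. You are right to assume radial symmetry explicitly --- without it the lemma as stated is false --- but you should note that this is an additional hypothesis being imposed on the blob function rather than a consequence of the paper's assumptions.
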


An immediate consequence of Lemma~\ref{lmkenerl} is
\begin{lem} \label{L1}
	For any function $\rho \in  L^\infty(D)$, there exists a universal constant $C$ that only depends on $D$ such that for all $\varepsilon\geq\varepsilon^{'}\geq0$, 
	\\
	\Ni (a)
	$\norm{F_\varepsilon\ast\rho}_{L^\infty(D)} \leq C\|\rho\|_{L^\infty(D)}$.
	
	\Ni (b)
	$\norm{F_\varepsilon\ast\rho-F_{\varepsilon'}\ast\rho}_{L^\infty(D)}  
	\leq C \varepsilon \|\rho\|_{L^\infty(D)}$.
\end{lem}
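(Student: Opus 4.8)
The plan is to reduce both estimates to the local integrability of the singular kernel $|F(z)| = C_*|z|^{-(d-1)}$ over a bounded region, using only part (a) of Lemma~\ref{lmkenerl}; the derivative bound in part (b) of that lemma is not needed here. Throughout, recall from \eqref{F} that $|F(x)| = C_*|x|^{1-d} = C_*/|x|^{d-1}$.

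For part (a), I would start from the pointwise bound, valid for every $x\in D$,
\[
|F_\varepsilon\ast\rho(x)| \leq \|\rho\|_{L^\infty(D)}\int_D |F_\varepsilon(x-y)|\dy .
\]
By Lemma~\ref{lmkenerl}(a) we have $|F_\varepsilon(z)| \leq |F(z)| = C_*|z|^{-(d-1)}$, so it remains to bound $\int_D |x-y|^{-(d-1)}\dy$ uniformly in $x$. Since $D$ is bounded, it is contained in the ball $B(x,R)$ with $R = \operatorname{diam}(D)$ for every $x\in D$, and passing to polar coordinates gives
\[
\int_{B(x,R)} \frac{\dy}{|x-y|^{d-1}} = |\mathbb{S}^{d-1}|\,R ,
\]
because the factor $r^{d-1}$ from the volume element exactly cancels the singularity. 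This yields $\|F_\varepsilon\ast\rho\|_{L^\infty(D)} \leq C\|\rho\|_{L^\infty(D)}$ with $C = C_*|\mathbb{S}^{d-1}|R$ depending only on $D$ and the dimension.

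For part (b), the key observation is that by Lemma~\ref{lmkenerl}(a) both $F_\varepsilon$ and $F_{\varepsilon'}$ coincide with $F$ on $\{|z|\geq\varepsilon\}$ whenever $\varepsilon\geq\varepsilon'$, so their difference is supported in the small ball $\{|z|<\varepsilon\}$. Hence
\[
|(F_\varepsilon - F_{\varepsilon'})\ast\rho(x)| \leq \|\rho\|_{L^\infty(D)}\int_{|x-y|<\varepsilon} |F_\varepsilon(x-y) - F_{\varepsilon'}(x-y)|\dy ,
\]
and bounding $|F_\varepsilon - F_{\varepsilon'}| \leq |F_\varepsilon| + |F_{\varepsilon'}| \leq 2|F|$ on this region, the same polar-coordinate computation gives $\int_{|z|<\varepsilon} |z|^{-(d-1)}\,\mathrm{d}z = |\mathbb{S}^{d-1}|\varepsilon$. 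This produces the linear factor $\varepsilon$ and the claimed bound $C\varepsilon\|\rho\|_{L^\infty(D)}$.

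There is no serious obstacle in this lemma, consistent with its billing as an immediate consequence of Lemma~\ref{lmkenerl}. The only points requiring care are recognizing that the $(d-1)$-homogeneous singularity of $F$ is locally integrable in $\mathbb{R}^d$ (the volume element supplies precisely the compensating power $r^{d-1}$, so the radial integral of the kernel is finite and scales linearly in the radius), and, for part (b), observing that the difference $F_\varepsilon - F_{\varepsilon'}$ is localized to $B(0,\varepsilon)$, which is exactly what converts the integrability into a bound of order $\varepsilon$.
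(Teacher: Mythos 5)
Your proof is correct and follows essentially the same route as the paper: both parts rest on the bound $|F_\varepsilon|\leq|F|$ from Lemma~\ref{lmkenerl}(a), the local integrability of $|z|^{-(d-1)}$ over a bounded set for (a), and the observation that $F_\varepsilon-F_{\varepsilon'}$ is supported in $\{|z|\leq\varepsilon\}$ so the same radial integral yields the factor $\varepsilon$ for (b). Your write-up simply makes explicit the polar-coordinate computation that the paper leaves implicit.
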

\begin{proof}
	(a). By Lemma~\ref{lmkenerl} (a), we have
	\begin{align*}
	\abs{F_\varepsilon\ast\rho}
	\leq 
	\vpran{\int_{D} F_\Eps (x - y) \dy} \norm{\rho}_{L^\infty(D)}
	\leq
	\vpran{\int_{D} \frac{C_\ast}{|x - y|^{d-1}} \dy} \norm{\rho}_{L^\infty(D)}
	\leq
	C \norm{\rho}_{L^\infty(D)} \,.
	\end{align*}
	
	
	\Ni (b). By Lemma \ref{lmkenerl} (a) again, 
	we have
	\begin{align*}
	|F_\varepsilon\ast\rho(x)-F_{\varepsilon'}\ast\rho(x)|
	&\leq 
	\int_{D}\big|F_\varepsilon\big(x-y\big)-F_{\varepsilon^{'}}\big(x-y\big)\big|\rho(y) \dy
	\\
	&\leq C\|\rho\|_{L^\infty(D)}\int_{\{y:|x-y|\leq\varepsilon \}}\frac{\dy}{|x-y|^{d-1}}
	\leq C\, \varepsilon \|\rho\|_{L^\infty(D)}\,,
	\end{align*}
	where $C$ is a generic constant. 
\end{proof}

Using the above two lemmas, we can prove a crucial Quasi-Log-Lipschitz estimate for $F_\Eps$. 
\begin{lem}[Quasi-log-Lipschitz]\label{QLe} 
	Let $X_1, X_2$ be two random variables with densities $\rho_1, \rho_2 \in  L^\infty(D)$ respectively (${X}_1$  and ${X}_2$ may not be independent). Suppose $r_0$ is large enough such that $D \subseteq B(0, r_0)$. Then for any $0 \leq \varepsilon' \leq \varepsilon < 1$, there exists a constant $C$ depending only on $D$, $\|\rho_1\|_{ L^\infty}$ and $\|\rho_2\|_{ L^\infty}$ such that
	\begin{align}\label{distantest}
	\abs{F_\varepsilon\ast\rho_1(X_1)-F_{\varepsilon'}\ast\rho_2(X_2)}
	\leq 
	C \omega(\Eps)
	+ C\omega\vpran{\PP\mbox{-ess sup }|X_1-X_2|} \,,
	\end{align}
	where
	\begin{align}\label{Lipfunction1}
	\omega(r)
	=\begin{cases}
	1 \,,  & \text{if  $r\geq 1$} \,,\\[2pt]
	r(1-\log r) \,,  & \text{if $0<r< 1$} \,.
	\end{cases}
	\end{align}
\end{lem}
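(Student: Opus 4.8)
The plan is to prove the Quasi-log-Lipschitz estimate \eqref{distantest} by splitting the difference into a part controlled by the regularization gap and a part controlled by the Wasserstein-type distance between $X_1$ and $X_2$. First I would use the triangle inequality to write
\begin{align*}
\abs{F_\varepsilon\ast\rho_1(X_1)-F_{\varepsilon'}\ast\rho_2(X_2)}
\leq
\abs{F_\varepsilon\ast\rho_1(X_1)-F_{\varepsilon'}\ast\rho_1(X_1)}
+ \abs{F_{\varepsilon'}\ast\rho_1(X_1)-F_{\varepsilon'}\ast\rho_2(X_2)} \,.
\end{align*}
The first term on the right is handled immediately by Lemma~\ref{L1}(b), which gives a bound of order $\varepsilon\|\rho_1\|_{L^\infty}$; since $\varepsilon < 1$ we have $\varepsilon \leq \varepsilon(1-\log\varepsilon) = \omega(\varepsilon)$, so this term is absorbed into $C\omega(\Eps)$. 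The substance of the lemma is therefore the second term, where the same (regularized) kernel $F_{\varepsilon'}$ acts but the densities and evaluation points both differ.

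For the second term I would introduce the optimal coupling realizing the $\infty$-Wasserstein distance, or more precisely work directly with the given random variables: set $\delta := \PP\mbox{-ess sup }|X_1-X_2|$ and aim to bound $\abs{F_{\varepsilon'}\ast\rho_1(X_1)-F_{\varepsilon'}\ast\rho_2(X_2)}$ by $C\omega(\delta)$. The standard strategy for a log-Lipschitz modulus is to separate the contribution near the singularity of the kernel from the contribution away from it. Away from the singularity $F$ (hence $F_{\varepsilon'}$) is Lipschitz, so one gets a clean linear-in-$\delta$ bound there; near the singularity, where $|x-y| \lesssim \delta$, one estimates the singular integral $\int_{|x-y|\lesssim\delta} |x-y|^{1-d}\dy$ crudely, which contributes the $\delta$ factor, while the logarithmic factor $1-\log\delta$ arises from integrating $|x-y|^{-d}$ (the size of $\nabla F$) over the annulus $\delta \lesssim |x-y| \lesssim r_0$. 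Combining the two regimes and using $D\subseteq B(0,r_0)$ to keep all integrals over a fixed bounded set yields the modulus $\omega(\delta) = \delta(1-\log\delta)$ when $\delta < 1$, and the trivial bound $\omega(\delta)=1$ (via Lemma~\ref{L1}(a) giving an absolute $L^\infty$ bound on $F_{\varepsilon'}\ast\rho$) when $\delta \geq 1$.

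To make the $\PP$-ess sup enter rigorously I would pass through the coupling description of $W_\infty$: writing both $F_{\varepsilon'}\ast\rho_i(X_i)$ as expectations against the laws $\rho_i$ and using the joint law $\pi$ of $(X_1,X_2)$, the difference becomes an integral of $F_{\varepsilon'}(X_i - y)$ differences that I can control pointwise $\pi$-almost surely by $|X_1 - X_2| \leq \delta$, which is exactly where the definitions~\eqref{pess} and~\eqref{piess} of the essential suprema are used. The main obstacle I expect is not the near/far splitting itself but keeping the constants uniform as $\varepsilon' \to 0$: since the bound must hold for all $0\leq\varepsilon'\leq\varepsilon<1$ including the unregularized kernel $\varepsilon'=0$, I must ensure that the singular-integral estimates use only $|F_{\varepsilon'}(z)|\leq |F(z)| = C_*|z|^{1-d}$ and the genuine (rather than regularization-dependent) decay of $\nabla F$, invoking Lemma~\ref{lmkenerl}(a) to dominate $F_{\varepsilon'}$ by $F$ uniformly, and being careful that the gradient bound from Lemma~\ref{lmkenerl}(b) (which blows up like $\varepsilon'^{-d}$) is only applied on the region $|x-y|\leq\varepsilon'$ where it is multiplied by a volume factor $\varepsilon'^{d}$, so that the $\varepsilon'$-dependence cancels and the final constant $C$ depends only on $D$, $\|\rho_1\|_{L^\infty}$ and $\|\rho_2\|_{L^\infty}$ as claimed.
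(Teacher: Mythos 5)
Your proposal is correct and follows essentially the same route as the paper's proof: a triangle-inequality split into a regularization-gap term (handled by Lemma~\ref{L1}(b) and absorbed into $C\omega(\Eps)$) plus a coupled kernel-difference term, which is then estimated via the joint law $\pi$ of $(X_1,X_2)$ with a near/far splitting at the scale of $\PP\mbox{-ess sup }|X_1-X_2|$, the near region controlled by $|F_{\Eps'}|\leq|F|$ and the far region producing the logarithm from integrating $|z|^{-d}$ over an annulus, with the intermediate region where Lemma~\ref{lmkenerl}(b) is needed contributing only a linear term after the $\Eps'^{-d}\cdot\Eps'^{d}$ cancellation. The only (cosmetic) difference is the order of the split --- the paper first changes the density and evaluation point at level $\Eps$ and then changes $\Eps$ to $\Eps'$ at fixed $\rho_2$, whereas you do the reverse --- and both orderings land within the claimed bound.
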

\begin{proof}
	Splitting the difference into two parts such that
	\begin{align}\label{zongest1}\nonumber
	F_\varepsilon\ast\rho_1(X_1)-F_{\varepsilon'}\ast\rho_2(X_2)
	&= \vpran{F_\varepsilon\ast\rho_1(X_1) - F_\varepsilon\ast\rho_2(X_2)}
	+ \vpran{F_\varepsilon\ast\rho_2(X_2)-F_{\varepsilon'}\ast\rho_2(X_2)}
	\\
	&=:I_1+I_2 \,,
	\end{align}
	we will derive the desired bound by bounding $I_1, I_2$ separately. The bound of $I_2$ is readily obtained by Lemma \ref{L1} $(b)$, which shows  that there exists a constant $C$ depending only on $\|{\rho}_2\|_ {L^\infty}$ such that
	\begin{eqnarray}\label{nonlinearestimates1}
	|I_2|\leq C\,\varepsilon\,.
	\end{eqnarray}
	
	The estimate of $I_1$ can	be obtained by following the approach in \cite[Lemma 1.4]{kato1967classical}.
	To this end, we introduce the joint distribution of $X_1$ and $X_2$ as $\pi:=\mathcal{L}(X_1,X_2)$. Then 
	\begin{align*}
	|I_1|= \left|\iint_{D\times D} F_\varepsilon(X_1-x_1)-F_{\varepsilon}(X_2-x_2)\pi(\dx_1,\dx_2)\right|.
	\end{align*}
	Let $\ell$ be the constant defined by
	\begin{align*}
	\ell:
	=\PP\mbox{-ess sup }|X_1-X_2|
	+\pi \mathop{\mbox{-ess sup }}\limits_{(x_1,x_2)\in D\times D} |x_1 - x_2| \,.
	\end{align*} 
	By their definitions in \eqref{pess} and \eqref{piess}, we have
	\begin{align*}
	\PP\mbox{-ess sup }|X_1-X_2|
	=\pi\mathop{\mbox{-ess sup }}\limits_{(x_1,x_2)\in D\times D} |x_1 - x_2| \,.
	\end{align*}
	Splitting $|I_1|$ into two parts, we have
	\begin{align*}
	& \quad \,
	\abs{\iint_{D\times D} F_\varepsilon(X_1-x_1)-F_{\varepsilon}(X_2-x_2)\pi(\dx_1, \dx_2)} \nn
	\\
	& \leq \abs{\iint_{(\{|X_1-x_1|\leq 2\ell\}\cap D)\times D} F_\varepsilon(X_1-x_1)-F_{\varepsilon}(X_2-x_2)\pi(\dx_1, \dx_2)} \nn 
	\\
	& \quad \,
	+ \abs{\iint_{ (\{|X_1-x_1|> 2\ell\}\cap D)\times D} F_\varepsilon(X_1-x_1)-F_{\varepsilon}(X_2-x_2)\pi(\dx_1, \dx_2)}  \nn\\
	& =: I_{11} + I_{12} \,.
	\end{align*}
	Note that over the integration domain of $I_{11}$, we have
	\begin{align*}
	|X_2-x_2|
	&\leq 
	|X_1-x_1|+|X_2-X_1+x_1-x_2| \nn 
	\\
	&\leq 
	2\ell+\PP\mbox{-ess sup }|X_1-X_2|
	+ \pi\mathop{\mbox{-ess sup }}\limits_{(x,y)\in D\times D} |x_1- x_2|
	= 3\ell \,.
	\end{align*}
	Therefore, $I_{11}$ satisfies
	\begin{align}\label{I11}
	I_{11}
	&\leq  
	\int_{|X_1-x_1|\leq 2\ell} \frac{C}{|X_1-x_1|^{d-1}}\rho_1(x_1)\dx_1
	+ \int_{|X_2-x_2|\leq 3\ell}\frac{C}{|X_2-x_2|^{d-1}}\rho_2(x_2)\dx_2 \nn
	\\
	&\leq 
	C(\norm{\rho_1}_\infty+\norm{\rho_2}_{L^\infty(D)}) \ell \,,
	\end{align}
	where we have applied the bound $|F_\varepsilon(x)|\leq |F(x)|$.
	
	To bound $|I_{12}|$, we separate the two cases where $2 \ell > 2\Eps$ and $2 \ell \leq 2\Eps$. First, if $2\ell > 2\Eps$, then $\abs{X_1 - x_1} > 2 \ell > 2\Eps $. By the definition of $\ell$ we have
	\begin{align*}
	\ell\geq |X_1-x_1|-|X_2-x_2| >2\ell- |X_2-x_2|\,,
	\end{align*}
	which implies $ |X_2-x_2|>\ell>2\varepsilon$.
	Therefore, if $2 \ell > \Eps$, then
	\begin{align*}
	F_\varepsilon(X_1-x_1)
	= F(X_1-x_1),\quad F_\varepsilon(X_2-x_2)=F(X_2-x_2) \,,
	\end{align*}
	and
	\begin{align*}
	F_\varepsilon(X_1-x_1) - F_\varepsilon(X_2-x_2)
	\leq \frac{C|X_2-X_1+x_1-x_2|}{|X_1-x_1+\xi(X_2-X_1+x_1-x_2)|^{d}} 
	\qquad
	\text{for some $\xi \in [0, 1]$}\,.
	\end{align*}
	Note that
	\begin{align*}
	|X_1-x_1+\xi(X_2-X_1+x_1-x_2)|  
	\geq& |X_1-x_1|-|X_2-X_1+x_1-x_2|
	\\
	\geq &|X_1-x_1|-\ell \geq |X_1-x_1|- \frac{1}{2} |X_1-x_1|=\frac{1}{2} |X_1-x_1|\,.
	\end{align*}
	Therefore,
	\begin{align}\label{I12}
	|I_{12}| 
	&\leq 
	\iint_{\{2\ell< |X_1-x_1| \leq r_0\}\times D} |X_2-X_1+x_1-x_2|
	\frac{C}{|X_1-x_1+\xi(X_2-X_1+x_1-x_2)|^{d}}\pi(\dx_1,\dx_2)  \nn
	\\
	&\leq C\ell  \int_{2\ell <|X_1-x_1|\leq r_0} \frac{2^d}{|X_1-x_1|^{d}}\rho_1(x_1)\dx_1
	\leq C\norm{\rho_1}_{L^\infty}\ell\log(\frac{r_0}{2\ell})\,,
	\qquad
	\text{if $2 \ell > 2\Eps$\,.}
	\end{align}
	Thus it follows from \eqref{I11} and \eqref{I12} that
	\begin{align}\label{I1case1}
	|I_1|
	\leq C \ell (1-\log(\ell))\,,
	\quad 
	\text{if $\varepsilon<\ell<1$}\,.
	\end{align}

	In the case where $2 \ell \leq 2\Eps$, we further divide $I_{12}$ as
	\begin{align*}
	I_{12}
	&\leq 
	\abs{\iint_{ (\{|X_1-x_1|> 2\Eps \}\cap D)\times D} F_\varepsilon(X_1-x_1)-F_{\varepsilon}(X_2-x_2)\pi(\dx_1, \dx_2)}
	\\
	& \quad \,
	+ \iint_{ (\{2 \ell \leq |X_1-x_1| \leq 2\Eps\}\cap D)\times D} \abs{F_\varepsilon(X_1-x_1)-F_{\varepsilon}(X_2-x_2)} \pi(\dx_1, \dx_2) 
	:= I^{(1)}_{12} + I^{(2)}_{12} \,.
	\end{align*}
	The first term $I^{(1)}_{12}$ is bounded in a similar as in the case $2 \ell > 2\Eps$, which gives
	\begin{align} \label{bound:I-1-12}
	I^{(1)}_{12} 
	\leq 
	C \norm{\rho_1}_{L^\infty} \Eps \log(\frac{r_0}{2\Eps}) \,.
	\end{align}
	By Lemma~\ref{lmkenerl} (b), the second term $I^{(2)}_{12}$ satisfies
	\begin{align} \label{bound:I-2-12}
	I^{(2)}_{12}
	& \leq 
	\iint_{ (|X_1-x_1| \leq 2\Eps\}\cap D)\times D} 
	\abs{X_1 - X_2 - x_1 + x_2} \Eps^{-d} \pi(\dx_1, \dx_2)\notag
	\\
	& \leq
	C \ell \Eps^{-d} \int_{|X_1 - x_1| \leq 2 \Eps} \rho(x_1) \dx_1
	\leq
	C \ell  \,. 
	\end{align}
	Combining~\eqref{bound:I-1-12} and~\eqref{bound:I-2-12}, we have
	\begin{align} \label{bound:I-12-case-2}
	I_{12} \leq C \omega(\Eps) \,,
	\qquad  
	\text{if $2 \ell \leq 2\Eps$} \,.
	\end{align}
	Collecting estimates \eqref{I11} and \eqref{bound:I-12-case-2}, we derive that 
	\begin{align*}
	|I_1|
	\leq 
	C \omega(\Eps)\,,
	\quad 
	\text{if $0<\ell\leq \varepsilon$}\,.
	\end{align*}
	Combing this with \eqref{I1case1}, we concludes that
	\begin{align*}
	|I_1|
	\leq 
	C \omega(\Eps) +C\omega(\ell)\,,
	\quad 
	\text{if $0<\ell<1$}\,.
	\end{align*}
	where $C$ depends only on $D$, $\norm{\rho_1}_{L^\infty(D)}$ and $\norm{\rho_2}_{L^\infty(D)}$. For $\ell\geq 1$, it follows from Lemma \ref{L1} (i) that $|I_1|\leq C$. We thereby conclude our proof.
	%
	%
\end{proof}

There are two Gronwall-type inequalities used in the later proofs. The first one is
\begin{lem} \cite[Lemma A.1]{choi2016propagation} \label{L2-1}
	Let $f$ be a nonnegative scalar function satisfying
	\begin{align*}
	f(t)
	\leq 
	f_0\exp\left(C\int_0^t f(s)\ds\right),\quad t\geq 0 \,.
	\end{align*}
	Then 
	\begin{align*}
	f(t)
	\leq
	\frac{f_0}{1-Cf_0t}\,,\quad t\geq 0\,,
	\end{align*}
	where $C$ is a positive constant.
\end{lem}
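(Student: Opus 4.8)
The plan is to convert the integral inequality into a differential inequality that an integrating factor resolves. First I would set $g(t) = \int_0^t f(s)\ds$, so that $g(0)=0$ and, assuming $f$ is measurable and locally integrable, $g$ is absolutely continuous with $g'(t)=f(t)$ for almost every $t$. The hypothesis then reads
\begin{align*}
g'(t) \le f_0 \exp\vpran{C g(t)} \qquad \text{for a.e. } t \ge 0,
\end{align*}
which is precisely the differential inequality associated with the comparison ODE $y' = f_0 e^{Cy}$, $y(0)=0$.

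Next I would introduce the auxiliary function $H(t) = \exp(-C g(t))$, which is absolutely continuous on every compact interval. Differentiating and using the bound on $g'$ gives, for almost every $t$,
\begin{align*}
H'(t) = -C g'(t) \exp\vpran{-C g(t)} \ge -C f_0 \exp\vpran{C g(t)} \exp\vpran{-C g(t)} = -C f_0,
\end{align*}
where the inequality flips because the multiplier $-C\exp(-Cg(t))$ is negative. This one-sided bound integrates cleanly: since $H(0)=1$, integrating from $0$ to $t$ yields $H(t) \ge 1 - C f_0 t$, that is, $\exp(-C g(t)) \ge 1 - C f_0 t$.

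Finally, on the interval where $1 - C f_0 t > 0$, i.e. $0 \le t < 1/(Cf_0)$, I would invert this to obtain $\exp(C g(t)) \le (1 - C f_0 t)^{-1}$, and feed it back into the original hypothesis to close the argument:
\begin{align*}
f(t) \le f_0 \exp\vpran{C g(t)} \le \frac{f_0}{1 - C f_0 t}.
\end{align*}

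I do not expect a genuine obstacle; this is the standard separation-of-variables comparison for $y' = f_0 e^{Cy}$. The only points requiring care are, first, the regularity of $f$: one needs just enough integrability for $g$ to be absolutely continuous so that the a.e. differentiation, the chain rule applied to $H$, and the fundamental theorem of calculus are all legitimate. Second, the range of validity: the displayed bound is finite and meaningful precisely on $[0, 1/(C f_0))$, beyond which the right-hand side is no longer positive, so the conclusion should be understood as a finite-time estimate, which is exactly what the later applications require.
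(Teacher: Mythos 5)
Your proof is correct. The paper does not actually prove this lemma itself (it imports it by citation from Lemma A.1 of Choi--Salem), and your argument --- setting $g(t)=\int_0^t f(s)\,{\rm d}s$, observing $H(t)=\exp(-Cg(t))$ satisfies $H'\ge -Cf_0$, integrating, and feeding the resulting bound on $\exp(Cg(t))$ back into the hypothesis --- is the standard comparison proof and is sound, including your remark that the conclusion is only meaningful on $[0,1/(Cf_0))$, which is consistent with how the lemma is used in the paper.
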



The second Gronwall-type inequality states
\begin{lem} \label{L2}
	Let $\{f_\varepsilon(t)\}_{\varepsilon>0}$ be a family of nonnegative continuous functions  satisfying
	\begin{align} \label{ineq:f-Eps}
	f_\varepsilon^2(t)
	\leq 
	C\int_0^tf_\varepsilon(s)\omega(f_\varepsilon(s))\ds+C\omega(\varepsilon) T\quad\mbox{~for~all } t\in[0,T]\,,
	\end{align}
	where $C>0$ is a constant, $T$ is independent of $\Eps$, and $\omega(x)$ is defined in \eqref{Lipfunction1}. Suppose $\Eps$ is small enough such that $C\omega(\varepsilon) T < 1$.
	Then there exist two constants $C_T$ and $0 <\varLambda_0 < 1/2$ depending only on $T$ and $C$  such that if $\omega(\varepsilon)\leq \varLambda_0$, then
	\begin{align}\label{uniestimatet}
	\sup\limits_{t\in[0,T]}f_\varepsilon(t)
	\leq 
	C_T\omega(\varepsilon)^{\frac{1}{2}e^{-CT}}<1\,.
	\end{align}
\end{lem}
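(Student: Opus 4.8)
The plan is to reduce the inequality~\eqref{ineq:f-Eps} to a classical Osgood (log-Lipschitz) differential inequality and to run a continuity argument so that we only ever work in the regime $f_\varepsilon < 1$, where the modulus takes the explicit form $\omega(r) = r(1 - \log r)$. First I would record the base case: evaluating~\eqref{ineq:f-Eps} at $t = 0$ gives $f_\varepsilon^2(0) \le C\omega(\varepsilon)T < 1$ by the standing hypothesis, so $f_\varepsilon(0) < 1$; by continuity $f_\varepsilon$ then stays below $1$ on a maximal interval $[0, t_0)$.

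On that interval I would substitute $u := f_\varepsilon^2$. Since $f_\varepsilon < 1$ there, we have $\omega(f_\varepsilon) = f_\varepsilon(1 - \log f_\varepsilon)$, hence $f_\varepsilon \, \omega(f_\varepsilon) = u\big(1 - \tfrac12\log u\big)$, and~\eqref{ineq:f-Eps} becomes the Osgood-type inequality
\[
u(t) \le a + C\int_0^t \gamma(u(s))\,ds, \qquad a := C\omega(\varepsilon)T, \quad \gamma(r) := r\big(1 - \tfrac12\log r\big).
\]
Here $\gamma > 0$ is increasing on $(0,1)$ (indeed $\gamma'(r) = \tfrac12(1-\log r) > 0$), and $\int_0^1 \frac{dr}{\gamma(r)} = \infty$ (via the substitution $w = 1 - \tfrac12\log r$), which is exactly the Osgood condition guaranteeing a double-exponential bound rather than finite-time blow-up. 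Bounding $u$ by the solution $y$ of the comparison ODE $y' = C\gamma(y)$, $y(0) = a$, and integrating after the same substitution $w = 1 - \tfrac12\log y$ (which linearizes the equation into $w' = -\tfrac{C}{2}w$) yields explicitly
\[
u(t) \le e^2\, a^{\,e^{-Ct/2}}, \qquad\text{hence}\qquad f_\varepsilon(t) \le e\, a^{\frac12 e^{-Ct/2}} \quad\text{on } [0, t_0).
\]
Since $a = C\omega(\varepsilon)T$ and $\omega(\varepsilon) < 1$, taking the supremum over $t$ and using that $r \mapsto r^p$ is decreasing for $r<1$ pins the worst exponent at $t = t_0 \le T$; absorbing the harmless factor coming from $(CT)^{(\cdot)}$ into a constant $C_T$ and relabeling the generic constant $C$ (which accounts for the factor $\tfrac12$ in the decay rate) gives $\sup_{[0,t_0)} f_\varepsilon \le C_T\,\omega(\varepsilon)^{\frac12 e^{-CT}}$.

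Finally I would close the continuity argument and simultaneously obtain the strict inequality claimed in~\eqref{uniestimatet}: choose $\varLambda_0 \in (0, 1/2)$, depending only on $C$ and $T$, small enough that $C_T\,\varLambda_0^{\frac12 e^{-CT}} < 1$. If $\omega(\varepsilon) \le \varLambda_0$, the estimate above forces $f_\varepsilon < 1$ strictly on $[0, t_0)$; were $t_0 < T$, continuity and maximality would give $f_\varepsilon(t_0) = 1$, contradicting the strict bound extended to $t_0$. Hence $t_0 = T$, the estimate holds on all of $[0,T]$, and~\eqref{uniestimatet} follows. The main obstacle is precisely this self-consistent bootstrap: the a priori estimate is valid only while $f_\varepsilon < 1$, yet it is exactly what must be used to guarantee that $f_\varepsilon$ never reaches $1$, so $\varLambda_0$ has to be selected so that the output bound lies strictly below the very threshold that validates the input assumption. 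The explicit Osgood integration, by contrast, is a routine ODE computation once the substitution is in place.
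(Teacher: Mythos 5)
Your proposal is correct and follows essentially the same route as the paper: a continuity/bootstrap argument to stay in the regime where $\omega(r)=r(1-\log r)$, combined with an Osgood-type comparison whose explicit integration produces the double-exponential bound $\omega(\varepsilon)^{\frac12 e^{-Ct}}$ (the paper bootstraps on the monotone right-hand side $R_\varepsilon(t)<1$ and differentiates $R_\varepsilon^{1/2}$, while you bootstrap on $f_\varepsilon<1$ and apply Bihari to $u=f_\varepsilon^2$, which is only a cosmetic difference). Your exponent $\frac12 e^{-Ct/2}$ is in fact slightly sharper than the stated $\frac12 e^{-CT}$ and implies it since $\omega(\varepsilon)<1$, so the conclusion follows as you indicate.
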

\begin{proof}
	Let $R_\Eps$ be the continuous function defined by
	\begin{align*}
	R_\varepsilon(t)
	:= C\omega(\varepsilon) T
	+\int_0^t Cf_\varepsilon(s)\,\omega(f_\varepsilon(s))\ds\,.
	\end{align*}
	Since $R_\varepsilon(0)<1$ and $R_\varepsilon(t)$ is continuous in time, there  exists $T^\ast>0$ such that $R_\varepsilon(t) <1$ for all $t\in[0,T^\ast)$. We will show that such $T^\ast$ can be extended to $T$ by choosing $\Eps$ small enough.
	Indeed, for $\omega(\varepsilon)< 1/2$, define
	\begin{align} \label{def:F-T-Eps}
	F(s)
	:= \exp(1-\exp(-Cs))\omega(\varepsilon)^{\frac{1}{2}\exp(-Cs)}
	\quad \text{and} \quad
	T_\varepsilon
	:= \frac{1}{C}\log(1-\frac{1}{2}\log(\omega(\varepsilon))) \,.
	\end{align}
	Then $F$ is an increasing function and the particular choice of $T_\Eps$ satisfies 
	\begin{align} \label{bound:T-Eps}
	F(T_\Eps) = 1 \,,
	\qquad
	T_\Eps 
	\geq 
	\frac{1}{C} \log\vpran{1 + \frac{1}{2}\log2} > 0 \,,
	\qquad
	\omega(\varepsilon) < 1/2 \,.
	\end{align}
	Now we show that $0 < R_\varepsilon(t)<1$ for all $t\in[0,T_\varepsilon)$. We prove this by contradiction. Assume there exists $T_1<T_\varepsilon$ such that \begin{align*}
	R_\varepsilon(t) < 1
	\qquad
	\text{for any $t \in [0, T_1)$}
	\quad \text{and} \quad
	R_\varepsilon(T_1) = 1 \,.
	\end{align*}
	Since $\omega(x) = x \vpran{1 - \log x}$ on $(0, 1]$ and it is nondecreasing, by~\eqref{ineq:f-Eps} we have
	\begin{align}\label{41}
	\frac{{\rm d} R_\varepsilon(t)}{\dt}
	= Cf_\varepsilon(t)\omega(f_\varepsilon(t))
	\leq 
	C(R_\varepsilon(t))^{1/2}\omega((R_\varepsilon(t))^{1/2}) \,,
	\end{align}
	which gives
	\begin{align*}
	\frac{\rm d}{\dt}(R_\varepsilon(t))^{\frac{1}{2}}
	= \frac{1}{2}(R_\varepsilon(t))^{-\frac{1}{2}}\frac{{\rm d} R_\varepsilon(t)}{\dt}
	\leq
	\frac{C}{2}\omega\left((R_\varepsilon(t))^{1/2}\right)\leq C(R_\varepsilon(t))^{1/2}\left(1-\log((R_\varepsilon(t))^{\frac{1}{2}})\right)\,.
	\end{align*}
	Solving this differential inequality on $t \in [0, T_1]$, we get
	\begin{equation}\label{Rt}
	(R_\varepsilon(t))^{\frac{1}{2}}
	\leq 
	e^{1-(1-\frac{1}{2}\log(C\omega(\varepsilon)T_1))e^{-CT_1}}\leq \exp(1-\exp(-CT_1))\omega(\varepsilon)^{\frac{1}{2}\exp(-CT_1)} = F(T_1)\,.
	\end{equation}
	By the definitions and properties of $F, T_1, T_\Eps$, we have
	\begin{align*}
	(R_\varepsilon(T_1))^{\frac{1}{2}}
	\leq F(T_1)
	<    F(T_\varepsilon) = 1\,,
	\end{align*}
	which contradicts the assumption $R_\varepsilon(T_1)=1$. 
	Hence we have proved that  $R_\varepsilon(t) < 1$ for all $t\in[0,T_\varepsilon)$ with $T_\Eps$ defined in~\eqref{def:F-T-Eps}.  Notice that by the definition of $T_\Eps$, there exists $\varLambda_0 < 1/2$ depending only on $C$ and $T$ such that $T_\varepsilon>T$ for all $\omega(\varepsilon)\leq \varLambda_0$. For such $\Eps$ we have 
	\begin{align*}
	R_\varepsilon(t) <1 \qquad \text{for all $t\in [0,T]$} \,.
	\end{align*} 
	Then repeating the derivation of~\eqref{Rt}, we obtain that
	\begin{align*}
	f_\varepsilon(t)
	\leq 
	(R_\varepsilon(t))^{\frac{1}{2}}
	\leq F(T) 
	\leq C_T\omega(\varepsilon)^{\frac{1}{2}e^{-CT}}
	\quad \mbox{for all }t\in[0,T] \,.
	\end{align*}
\end{proof}

\section{Well-posedness of the nonlinear SDE and PDE} \label{sec:wellposed}
In this section we establish the well-posedness of the nonlinear SDE \eqref{SDE'} and show that the marginal density of its solution is the unique weak solution to the KS equation \eqref{PDE}.
\subsection{Regularized system}
The existence of solutions to~\eqref{SDE'} and~\eqref{PDE} will be obtained through the convergence of approximate solutions to regularized versions of~\eqref{SDE'} and~\eqref{PDE}, where the singular force $F$ is replaced by $F_\Eps$ defined in~\eqref{def:F-Eps}. 


First, we show the well-posedness of the regularized PDE 
\begin{align}\label{RPDE}
\begin{cases}
\partial_t\rho_t^\varepsilon=\nu\triangle \rho_t^\varepsilon-\nabla \cdot[\rho_t ^\varepsilon F_\varepsilon\ast \rho_t^\varepsilon]\,, & x\in D\,, \quad t>0\,,
\\[2pt]
\rho_t^\varepsilon(x)|_{t=0}=\rho_0(x)\,, &x\in D \,, 
\\[2pt]
\langle\nu\nabla\rho_t^\varepsilon-\rho_t^\varepsilon F_\varepsilon\ast \rho_t^\varepsilon,n\rangle=0\,, &\mbox{ on } \partial D\,.
\end{cases}
\end{align}
\begin{proposition}\label{propRPDE}
	Suppose $\rho_0 \in L^\infty(D)$. Then there exists $T > 0$ independent of $\Eps$ such that equation~\eqref{RPDE} has a weak solution $\rho_t^\varepsilon \in L^\infty\left(0, T; L^\infty(D)\right)$ with the bound
	\begin{align} \label{bound:rho-Eps}
	\sup\limits_{0\leq t\leq T}\|\rho_t^\varepsilon\|_{L^\infty(D)}
	\leq 
	2 \norm{\rho_0}_{L^\infty(D)} \,.
	\end{align}
\end{proposition}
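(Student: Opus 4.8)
The plan is to solve \eqref{RPDE} for each fixed $\varepsilon$ by a contraction mapping argument based on the mild (Duhamel) formulation, and to read off the uniform bound \eqref{bound:rho-Eps} from the very same estimate. Let $S(t) := e^{t\nu\Delta_N}$ denote the heat semigroup on $D$ generated by the Laplacian with homogeneous Neumann data; this is the natural propagator because the no-flux condition in \eqref{RPDE} is exactly what makes the boundary terms disappear when one pairs the equation against a test function. Writing $M_0 := \norm{\rho_0}_{L^\infty(D)}$, I would define the solution map on a ball of $C([0,T]; L^\infty(D))$ by
\[
\Gamma[\rho]_t := S(t)\rho_0 - \int_0^t S(t-s)\,\nabla\cdot\!\big(\rho_s\,(F_\varepsilon\ast\rho_s)\big)\ds,
\]
where $S(\tau)\nabla\cdot$ is understood in the weak sense $\langle S(\tau)\nabla\cdot g, \psi\rangle = -\int_D g\cdot\nabla(S(\tau)\psi)\dx$, so that a fixed point of $\Gamma$ is precisely a weak solution of \eqref{RPDE}.

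Two ingredients drive the estimates. The first is the standard Neumann heat-semigroup bounds on the bounded convex domain $D$, namely $\norm{S(\tau)f}_{L^\infty(D)}\le\norm{f}_{L^\infty(D)}$ together with the gradient smoothing estimate $\norm{S(\tau)\nabla\cdot g}_{L^\infty(D)}\le C_D(\nu\tau)^{-1/2}\norm{g}_{L^\infty(D)}$. The second, and the reason the final time $T$ and the constant in \eqref{bound:rho-Eps} do not depend on $\varepsilon$, is Lemma~\ref{L1}(a), which gives $\norm{F_\varepsilon\ast\rho_s}_{L^\infty(D)}\le C_D\norm{\rho_s}_{L^\infty(D)}$ with $C_D$ independent of $\varepsilon$. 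Combining these, for $\rho$ in the ball $\{\sup_{[0,T]}\norm{\rho_t}_{L^\infty(D)}\le 2M_0\}$ I obtain
\[
\norm{\Gamma[\rho]_t}_{L^\infty(D)}
\le M_0 + C_D\int_0^t (\nu(t-s))^{-1/2}\norm{\rho_s}_{L^\infty(D)}^2\ds
\le M_0 + C_D\,\nu^{-1/2}\sqrt{T}\,(2M_0)^2,
\]
where $C_D$ absorbs the time-integral constant. Choosing $T=T(D,\nu,M_0)$ small enough that $4C_D\nu^{-1/2}M_0^2\sqrt{T}\le M_0$ forces the right-hand side to be $\le 2M_0$, so $\Gamma$ maps the ball into itself, and this choice of $T$ is uniform in $\varepsilon$. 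An entirely analogous computation, using the bilinear splitting $\rho\,F_\varepsilon\ast\rho - \tilde\rho\,F_\varepsilon\ast\tilde\rho = (\rho-\tilde\rho)F_\varepsilon\ast\rho + \tilde\rho\,F_\varepsilon\ast(\rho-\tilde\rho)$, shows that $\Gamma$ is a contraction on this ball after possibly shrinking $T$ (again uniformly in $\varepsilon$). Banach's fixed-point theorem then yields a mild solution $\rho^\varepsilon$ on $[0,T]$ satisfying \eqref{bound:rho-Eps}.

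It remains to confirm that $\rho^\varepsilon$ is genuinely a weak solution in the sense of the regularized analogue of Definition~\ref{weaksolutionpde} and that it is a probability density. Testing the Duhamel identity against $\psi\equiv 1$ and using $\nabla S(t-s)\,1 = 0$ gives $\int_D\rho^\varepsilon_t\dx = \int_D\rho_0\dx = 1$, so mass is conserved; nonnegativity of $\rho^\varepsilon$ follows from the positivity-preserving property of $S(t)$ together with a standard maximum-principle (or negative-part testing) argument for the frozen-drift linear problem. The equivalence of the mild and weak formulations is then standard once the propagator is identified with the Neumann semigroup.

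The step I expect to be the main obstacle is establishing the $L^\infty$ bound uniformly in $\varepsilon$ in the presence of the no-flux boundary condition. The clean factor $2$ in \eqref{bound:rho-Eps} hinges on two facts working together: that the Neumann heat semigroup on the convex domain $D$ enjoys a gradient smoothing estimate with only a half-power singularity $\tau^{-1/2}$, which is integrable in time and hence produces a small factor $\sqrt{T}$; and that $\norm{F_\varepsilon\ast\rho}_{L^\infty}$ is controlled by $\norm{\rho}_{L^\infty}$ with an $\varepsilon$-independent constant. A more naive energy or $L^p$ approach is delicate here because the boundary flux $\int_{\partial D}\langle F_\varepsilon\ast\rho,n\rangle\,\rho^p \, \mathrm{d} S$ has no definite sign, so it is precisely the semigroup route that renders the boundary contribution transparent and the constants manifestly uniform in $\varepsilon$.
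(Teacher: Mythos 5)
Your argument is essentially correct but follows a genuinely different route from the paper. You set up a Duhamel/fixed-point scheme with the Neumann heat semigroup and close the self-map estimate on the ball of radius $2\norm{\rho_0}_{L^\infty(D)}$ using the smoothing bound $\norm{S(\tau)\nabla\cdot g}_{L^\infty}\lesssim \tau^{-1/2}\norm{g}_{L^\infty}$ together with Lemma~\ref{L1}(a); the paper instead regularizes the initial data, quotes existence for the doubly-regularized problem, derives the differential inequality $\frac{\rm d}{\dt}\norm{\rho^{\Eps,\delta}_t}_{L^p}^p \le Cp\norm{\rho^{\Eps,\delta}_t}_{L^\infty}^2\norm{\rho^{\Eps,\delta}_t}_{L^p}^p$ by integration by parts, sends $p\to\infty$, applies the Gronwall-type Lemma~\ref{L2-1} to get the uniform time $T$ and the factor $2$, and finally passes to the weak-$*$ limit in $\delta$. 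Your route buys uniqueness and continuity in time of the regularized solution for free, at the cost of importing a nontrivial external input: the $L^1\to L^1$ gradient bound $\norm{\nabla S(\tau)\psi}_{L^1}\le C\tau^{-1/2}\norm{\psi}_{L^1}$ for the Neumann semigroup on a bounded convex domain with only $C^1$ boundary. This is true for convex domains (Wang-type gradient estimates), but it is the load-bearing step of your proof and cannot be waved through as ``standard'' without a citation; it is precisely the kind of boundary-regularity-sensitive fact the paper's energy method avoids. I would also correct your closing remark: the $L^p$ approach is \emph{not} obstructed by an indefinite boundary flux, because the no-flux condition states that $\langle \nu\nabla\rho^\Eps_t - \rho^\Eps_t F_\Eps\ast\rho^\Eps_t, n\rangle=0$ on $\partial D$, so the combined boundary term $p\int_{\partial D}(\rho^{\Eps,\delta}_t)^{p-1}\langle \nu\nabla\rho^{\Eps,\delta}_t - \rho^{\Eps,\delta}_t F_\Eps\ast\rho^{\Eps,\delta}_t, n\rangle \,{\rm d}S$ vanishes identically rather than merely having a sign; this exact cancellation is what the paper exploits. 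Finally, as you note, nonnegativity does not follow directly from the mild formulation (the Duhamel term is not sign-definite), so your appeal to the maximum principle for the frozen-drift linear problem is necessary, not optional, and requires uniqueness for that linear problem to identify the two solutions.
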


\begin{proof}
	In order to show the existence of a weak solution to \eqref{RPDE}, first we consider the regularized equation
	\begin{align}\label{RPDE1}
	\begin{cases}
	\partial_t\rho_t^{\varepsilon,\delta}
	=\nu\triangle \rho_t^{\varepsilon,\delta}
	- \nabla \cdot[\rho_t ^\varepsilon F_\varepsilon\ast \rho_t^{\varepsilon,\delta}] \,, & x\in D\,, \quad t>0\,,
	\\[2pt]
	\rho_t^{\varepsilon,\delta}(x)|_{t=0}=\rho_0^\delta(x) \,, & x\in D \,,
	\\[2pt]
	\langle\nu\nabla\rho_t^{\varepsilon,\delta}-\rho_t^{\varepsilon,\delta} F_\varepsilon\ast \rho_t^{\varepsilon,\delta},n\rangle=0 &\mbox{ on } \partial D\,,
	\end{cases}
	\end{align}
	where $\rho_0^\delta\in C^\infty(D)$ satisfies
	\begin{align*}
	\norm{\rho_0^\delta}_{L^\infty(D)}\leq \norm{\rho_0}_{L^\infty(D)} \,,
	\qquad
	\rho_0^\delta\rightarrow \rho_0\quad\mbox{in }L^1\mbox{ as }\delta\rightarrow 0\,.
	\end{align*}
	Then for any fixed $\Eps, \delta>0$, there exists $T_1$ (which may depend on $\Eps, \delta$) such that system \eqref{RPDE1} has a smooth nonnegative solution $\rho^{\Eps, \delta}_t \in C([0, T_1); L^\infty(D))$ c.f. \cite{carrillo2003kinetic}. In order to remove the dependence of $T_1$ on $\Eps, \delta$, we show (a priori) that there exist $T, C > 0$ which only depend on $\norm{\rho_0}_{L^\infty}$ and the domain $D$ such that
	\begin{equation*}
	\sup\limits_{t\in[0,T]}\norm{\rho_t^{\varepsilon,\delta}}_{L^\infty(D)}\leq C \,.
	\end{equation*}
	Indeed, for any $p \geq 2$,  we have
	\begin{align} \label{eq:rho-delta-p}
	\frac{{\rm d}\norm{\rho_t^{\varepsilon,\delta}}_{L^p(D)}^p}{\dt}
	&=p\int_D(\rho_t^{\varepsilon,\delta})^{p-1}(\nu\Delta\rho_t^{\varepsilon,\delta}-\nabla\cdot(\rho_t^{\varepsilon,\delta} F_\varepsilon\ast\rho_t^{\varepsilon,\delta})) \dx \nn
	\\
	&=p\nu\int_{\partial D}(\rho_t^{\varepsilon,\delta})^{p-1}\nabla\rho_t^{\varepsilon,\delta}\cdot n \,{\rm d}S(x)-p\nu \int_D\nabla\rho_t^{\varepsilon,\delta}\cdot \nabla ((\rho_t^{\varepsilon,\delta})^{p-1}) \dx \nn
	\\
	&\quad
	- p\int_{\partial D}(\rho_t^{\varepsilon,\delta})^{p} F_\varepsilon\ast\rho_t^{\varepsilon,\delta}\cdot n \,{\rm d}S(x)+p\int_D\rho_t^{\varepsilon,\delta} F_\varepsilon\ast\rho_t^{\varepsilon,\delta}\cdot\nabla((\rho_t^{\varepsilon,\delta})^{p-1}) \dx \nn
	\\
	&=-p(p-1)\nu \int_D(\rho_t^{\varepsilon,\delta})^{p-2}|\nabla \rho_t^{\varepsilon,\delta}|^2 \dx
	+p\int_D\rho_t^{\varepsilon,\delta} F_\varepsilon\ast\rho_t^{\varepsilon,\delta}\cdot\nabla((\rho_t^{\varepsilon,\delta})^{p-1}) \dx \,.
	\end{align}
	re-organize equation~\eqref{eq:rho-delta-p} as
	\begin{align} \label{eq:rho-delta-p-1}
	\frac{{\rm d}\norm{\rho_t^{\varepsilon,\delta}}_{L^p(D)}^p}{\dt}
	+\frac{4\nu(p-1)}{p}\norm{\nabla((\rho_t^{\varepsilon,\delta})^{\frac{p}{2}})}_{L^2(D)}^2
	&=(p-1)\int_D F_\varepsilon\ast\rho_t^{\varepsilon,\delta}\cdot\nabla((\rho_t^{\varepsilon,\delta})^{p}) \dx\notag 
	\\
	&=2(p-1)\int_D (\rho_t^{\varepsilon,\delta})^{\frac{p}{2}}F_\varepsilon\ast\rho_t^{\varepsilon,\delta}\cdot\nabla((\rho_t^{\varepsilon,\delta})^{\frac{p}{2}}) \dx\,.
	\end{align}
	By $(a)$ in Lemma \ref{L1}, one has
	\begin{equation*}
	\int_D (\rho_t^{\varepsilon,\delta})^{\frac{p}{2}}F_\varepsilon\ast\rho_t^{\varepsilon,\delta}\cdot\nabla((\rho_t^{\varepsilon,\delta})^{\frac{p}{2}}) \dx
	\leq 
	C\norm{\rho_t^{\varepsilon,\delta}}_{L^\infty(D)}\norm{(\rho_t^{\varepsilon,\delta})^{\frac{p}{2}}}_{L^2(D)}\norm{\nabla((\rho_t^{\varepsilon,\delta})^{\frac{p}{2}})}_{L^2(D)} \,.
	\end{equation*}
	Applying Cauchy-Schwarz inequality in~\eqref{eq:rho-delta-p-1}, we obtain
	\begin{align*}
	\frac{{\rm d}\norm{\rho_t^{\varepsilon,\delta}}_{L^p(D)}^p}{\dt}
	+\frac{4\nu(p-1)}{p}\norm{\nabla((\rho_t^{\varepsilon,\delta})^{\frac{p}{2}})}_{L^2(D)}^2
	\leq 
	\frac{2\nu(p-1)}{p}\norm{\nabla((\rho_t^{\varepsilon,\delta})^{\frac{p}{2}})}_{L^2(D)}^2+Cp\norm{\rho_t^{\varepsilon,\delta}}_{L^\infty(D)}^2\norm{\rho_t^{\varepsilon,\delta}}_{L^p(D)}^p \,,
	\end{align*}
	where $C$ is independent of $\varepsilon, \delta$, and $p$. Gronwall's inequality then leads to
	\begin{equation*}
	\norm{\rho_t^{\varepsilon,\delta}}_{L^p(D)}
	\leq 
	\norm{\rho^\delta_0}_{L^p(D)}\exp \vpran{C\int_0^t\norm{\rho_s^\varepsilon}_{L^\infty(D)}^2 \ds}
	\qquad
	\text{for any $p \geq 2$\,.}
	\end{equation*}
	Now we send $p\rightarrow\infty$ to get
	\begin{equation*}
	\norm{\rho_t^{\varepsilon,\delta}}_{L^\infty(D)}
	\leq 
	\norm{\rho^\delta_0}_{L^\infty(D)}\exp\left(C\int_0^t\norm{\rho_s^\varepsilon}_{L^\infty(D)}^2 \ds\right) \,.
	\end{equation*}
	Finally, applying Lemma \ref{L2-1} with $f(t)=\norm{\rho_t^{\varepsilon,\delta}}_{L^\infty(D)}^2$ yields that
	\begin{equation*}
	\norm{\rho_t^{\varepsilon,\delta}}_{L^\infty(D)}^2\leq\frac{\norm{\rho_0^\delta}_{L^\infty(D)}^2}{1- 2C\norm{\rho_0^\delta}_{L^\infty(D)}^2t}\leq \frac{\norm{\rho_0}_{L^\infty(D)}^2}{1- 2C\norm{\rho_0}_{L^\infty(D)}^2t}\,.
	\end{equation*}
	Therefore, the uniform existence time $T$ can be chose as
	\begin{align*}
	T = \frac{3}{8 C \norm{\rho_0}^2_{L^\infty(D)}} \,,
	\end{align*}
	such that
	\begin{align*}
	\norm{\rho^{\Eps, \delta}_t}_{L^\infty(D)}
	\leq 
	2 \norm{\rho_0}_{L^\infty(D)}
	\qquad
	\text{on $[0, T)$\,.}
	\end{align*}
	Therefore there exists a $\rho^\Eps_t \in L^\infty\left(0, T; L^\infty(D)\right)$ such that up to a subsequence, 
	\begin{equation}\label{convestar}
	\rho_t^{\varepsilon,\delta}\rightharpoonup^\ast \rho_t^\varepsilon\quad\mbox{in } L^\infty\left(0, T; L^\infty(D)\right)\,.
	\end{equation}
	
	Now we show that $\rho_t^\varepsilon$ is a weak solution to \eqref{RPDE}.  
	Notice that for any $\phi \in C^\infty(\overline D)$ satisfying $\langle \nabla\phi(x),n(x) \rangle=0$ on $\partial D$, we have
	\begin{align} \label{eq:weak-approx}
	\int_{D}\rho_t^{\varepsilon,\delta}(x)\phi(x) \dx
	& = \int_{D}\rho_0^{\delta}(x)\phi(x) \dx
	+\nu\int_0^t\int_{D}  \Delta \phi(x)\rho_s^{\varepsilon,\delta}(x) \dx\ds \nn
	\\
	&\quad \,
	+ \int_0^t \int_{D} \rho_s^{\varepsilon,\delta} (x) F_\varepsilon \ast \rho_s^{\varepsilon,\delta}(x)\cdot\nabla \phi(x) \dx\ds \,.
	\end{align}
	Then by taking a continuous version of the left-hand side and~\eqref{convestar}, we derive that 
	\begin{align*}
	\int_{D} \rho_t^{\varepsilon,\delta}(x)\phi(x) \dx
	&\rightarrow 
	\int_{D}\rho_t^\varepsilon(x)\phi(x) \dx  \,,
	\qquad
	\int_{D}\rho_0^{\delta}(x)\phi(x) \dx
	\rightarrow 
	\int_{D}\rho_0(x)\phi(x) \dx \,,
	\\
	\nu\int_0^t\int_{D} & \Delta \phi(x)\rho_s^{\varepsilon,\delta}(x) \dx\ds \rightarrow 
	\nu\int_0^t\int_{D}  \Delta \phi(x)\rho_s^\varepsilon(x) \dx\ds\,,
	\end{align*}
	as $\delta \to 0$. For the last term on the right-hand side of~\eqref{eq:weak-approx}, we have
	\begin{align*}
	&\left|\int_0^t\int_{D}\rho_s^{\varepsilon,\delta}(x)F_\varepsilon\ast\rho_s^{\varepsilon,\delta}(x)\cdot\nabla \phi(x) \dx\ds
	- \int_0^t\int_{D}\rho_s^\varepsilon(x)F_\varepsilon\ast\rho_s^\varepsilon(x)\cdot\nabla \phi(x) \dx\ds\right| \nn
	\\
	\leq & 
	\left|\int_0^t\int_{D}\rho_s^{\varepsilon,\delta}(x)F_\varepsilon\ast\rho_s^{\varepsilon,\delta}(x)\cdot\nabla \phi(x) \dx\ds
	- \int_0^t\int_{D}\rho_s^{\varepsilon,\delta}(x)F_\varepsilon\ast\rho_s^\varepsilon(x)\cdot\nabla \phi(x) \dx\ds\right| \nn
	\\
	& +\left|\int_0^t\int_{D}\rho_s^{\varepsilon,\delta}(x)F_\varepsilon\ast\rho_s^\varepsilon(x)\cdot\nabla \phi(x) \dx\ds- \int_0^t\int_{D}\rho_s^{\varepsilon}(x)F_\varepsilon\ast\rho_s^\varepsilon(x)\cdot\nabla \phi(x) \dx\ds\right| \nn
	\\
	\leq & 
	C\int_0^t\int_{D} |F_\varepsilon\ast(\rho_s^{\varepsilon,\delta}-\rho_s^{\varepsilon})(x)| \ds\dx
	+ C \left|\int_0^t\int_{D}(\rho_s^{\varepsilon,\delta}-\rho_s^{\varepsilon})\phi(x) \dx\ds \right|\,.
	\end{align*}
	By \eqref{convestar}, one has
	\begin{equation*}
	\left|\int_0^t\int_{D}(\rho_s^{\varepsilon,\delta}-\rho_s^{\varepsilon})\phi(x) \dx\ds \right|\rightarrow 0 
	\quad
	\mbox{ as } \delta\rightarrow0\,.
	\end{equation*}
	Notice that for any $x\in D$, one has
	\begin{equation*}
	|F_\varepsilon\ast(\rho_s^{\varepsilon,\delta}-\rho_s^{\varepsilon})(x)|\leq\int_D F_\varepsilon(x-y)(\rho_s^{\varepsilon,\delta}(y)-\rho_s^{\varepsilon}(y)) \dy\rightarrow 0
	\quad
	\mbox{ as } \delta\rightarrow0\,,
	\end{equation*}
	since $F_\varepsilon\in L^1(D)$. It follows from the dominated convergence theorem that
	\begin{equation*}
	\int_0^t\int_{D} |F_\varepsilon\ast(\rho_s^{\varepsilon,\delta}-\rho_s^{\varepsilon})(x)| \ds\dx \rightarrow 0
	\quad
	\mbox{ as } \delta\rightarrow0\,.
	\end{equation*}
	Therefore,
	\begin{equation*}
	\int_0^t\int_{D}\rho_s^{\varepsilon,\delta}(x)F_\varepsilon\ast\rho_s^{\varepsilon,\delta}(x)\cdot\nabla \phi(x) \dx\ds
	\rightarrow 
	\int_0^t\int_{D}\rho_s^\varepsilon(x)F_\varepsilon\ast\rho_s^\varepsilon(x)\cdot\nabla \phi(x) \dx\ds
	\quad
	\mbox{ as } \delta\rightarrow 0 \,.
	\end{equation*}
	Combining the above limits, we have shown that $\rho^\Eps_t$ is a weak solution to~\eqref{RPDE} in the sense that
	\begin{align*} 
	\int_{D}\rho_t^\varepsilon(x)\phi(x) \dx
	&=\int_{D}\rho_0(x)\phi(x)\dx+\nu\int_0^t\int_{D}  \Delta \phi(x)\rho_s^\varepsilon(x) \dx\ds
	\\
	&\qquad
	+ \int_0^t\int_{D}\rho_s^\varepsilon(x)F_\varepsilon\ast\rho_s^\varepsilon(x)\cdot\nabla \phi(x) \dx\ds \,.
	\end{align*}
	with the bound in~\eqref{bound:rho-Eps}.
\end{proof}


Next, we show that for any weak solution $\rho_t^\varepsilon\in L^\infty\left(0, T; L^\infty(D)\right)$ to \eqref{RPDE}. One can construct a process $Y_t^\varepsilon$ with $\rho_t^\varepsilon$ being its density function, which satisfies regularized self-consistent SDE
\begin{align} \label{RSDE}
\begin{cases}
{Y}_t^{\varepsilon}
={Y}_0 + \int_{0}^t\int_{D}F_\varepsilon({Y}_s^{\varepsilon}-y)\rho_s^\varepsilon(y) \dy\ds
+\sqrt{2\nu}{B}_t - \tilde R_t^{\varepsilon}, & t>0 \,,
\\[2pt]
\tilde R_t^{\varepsilon}
= \int_0^t n(Y_s^{\varepsilon}) {\, \rm d} |\tilde R^{\varepsilon}|_s \,, 
\quad
|\tilde R^{\varepsilon}|_t
=\int_0^t\textbf{1}_{\partial D}(Y_s^{\varepsilon}) {\, \rm d}|\tilde R^{\varepsilon}|_s \,,
\end{cases}
\end{align}
where we require $Y_t^\varepsilon$ possessing a density function $\rho_t^\varepsilon(x)$. The result states as follows


\begin{proposition}\label{relation1}
	If $\rho_t^\varepsilon\in L^\infty\left(0, T; L^\infty(D)\right)$ is a weak solution to \eqref{RPDE} with the initial data $\rho_0\in L^\infty(D)$, then given any random variable $Y_0$ with the density $\rho_0$, there is a process $Y_t^\varepsilon$ with the marginal density $\rho_t^\varepsilon$ satisfying \eqref{RSDE} with the initial data $Y_0$.
\end{proposition}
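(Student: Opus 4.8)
The plan is to freeze the density $\rho_t^\Eps$ that appears in the drift of~\eqref{RSDE}, solve the resulting reflected SDE with this \emph{prescribed} drift, and then identify the law of its solution with $\rho_t^\Eps$ via a linear Fokker--Planck equation. First I would set
\begin{align*}
b_\Eps(t,x) := F_\Eps\ast\rho_t^\Eps(x) = \int_D F_\Eps(x-y)\rho_t^\Eps(y)\dy,
\end{align*}
so that~\eqref{RSDE} reads $\rd Y_t^\Eps = b_\Eps(t,Y_t^\Eps)\dt + \sqrt{2\nu}\,\rd B_t - \rd\tilde R_t^\Eps$ with a fixed (non-self-consistent) drift. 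By Proposition~\ref{propRPDE} together with Lemma~\ref{L1}~(a), $b_\Eps$ is bounded uniformly on $[0,T]$ by $C\norm{\rho_0}_{L^\infty(D)}$; and since $\norm{\rho_t^\Eps}_{L^1(D)}\le |D|\,\norm{\rho_t^\Eps}_{L^\infty(D)}$ and $|\nabla F_\Eps|\le C\Eps^{-d}$ by Lemma~\ref{lmkenerl}~(b), the field $b_\Eps(t,\cdot)$ is Lipschitz in $x$ with a constant independent of $t$.

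Next I would solve the reflected SDE. Because $D$ is bounded and convex and $b_\Eps$ is bounded and Lipschitz in space, the Skorokhod problem for the above equation has a unique strong solution $(Y_t^\Eps,\tilde R_t^\Eps)$ for the prescribed Brownian motion $B_t$ and initial datum $Y_0$; this follows from Tanaka's theory~\cite{tanaka1979stochastic} for convex domains, and can alternatively be obtained by a Picard iteration on the Skorokhod map using the Lipschitz bound. I denote by $\mu_t^\Eps$ the law of $Y_t^\Eps$, a probability measure on $\overline{D}$.

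To identify $\mu_t^\Eps$, I would apply It\^o's formula to $\phi(Y_t^\Eps)$ for any $\phi\in C^\infty(\overline D)$ with $\la\nabla\phi,n\ra=0$ on $\partial D$. Since the quadratic variation contributes $\nu\Delta\phi$ and the boundary term is $\la\nabla\phi(Y_s^\Eps),n(Y_s^\Eps)\ra\,\rd|\tilde R^\Eps|_s$, which vanishes because $|\tilde R^\Eps|$ increases only on $\partial D$ where $\la\nabla\phi,n\ra=0$, taking expectations (which annihilates the Brownian martingale) yields that $\mu_t^\Eps$ is a measure-valued weak solution of the \emph{linear} Fokker--Planck equation with frozen drift,
\begin{align*}
\int_D\phi\,\rd\mu_t^\Eps = \int_D\phi\,\rho_0\dx + \int_0^t\int_D\vpran{\nu\Delta\phi + (F_\Eps\ast\rho_s^\Eps)\cdot\nabla\phi}\,\rd\mu_s^\Eps\ds.
\end{align*}
Crucially, $\rho_t^\Eps\dx$ satisfies this very same linear identity, because $\rho_t^\Eps$ is by hypothesis a weak solution of~\eqref{RPDE} and its drift is exactly $F_\Eps\ast\rho_s^\Eps$.

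It then remains to show that the difference $\sigma_t:=\mu_t^\Eps-\rho_t^\Eps\dx$, which solves the homogeneous equation with $\sigma_0=0$, vanishes. My plan here is a duality argument: for fixed $\tau\in(0,T]$ and $\psi\in C^\infty(\overline D)$ with $\la\nabla\psi,n\ra=0$, solve the backward Neumann problem $\partial_s u + \nu\Delta u + b_\Eps\cdot\nabla u = 0$ on $[0,\tau)$ with $u(\tau,\cdot)=\psi$; since $b_\Eps$ is smooth in $x$ with bounded derivatives and bounded measurable in $t$, linear parabolic theory provides $u$ regular enough to serve as a time-dependent test function, whence $s\mapsto\int_D u(s,\cdot)\,\rd\sigma_s$ is constant and equals $0$ at $s=0$, forcing $\int_D\psi\,\rd\sigma_\tau=0$ for all $\psi$ and hence $\sigma_\tau=0$. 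As $\tau$ is arbitrary, $Y_t^\Eps$ has density $\rho_t^\Eps$. I expect this last step to be the main obstacle: proving uniqueness for the linear Fokker--Planck equation in the measure-valued class and rigorously justifying the time-dependent test function $u$ when $b_\Eps$ is only bounded measurable in time; correctly disposing of the reflecting term in It\^o's formula via the Neumann condition is the other point that needs care.
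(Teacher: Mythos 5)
Your proposal is correct and follows essentially the same route as the paper: freeze the drift $F_\Eps\ast\rho_t^\Eps$, solve the resulting reflected SDE with prescribed smooth bounded drift, apply It\^o's formula with Neumann test functions (the reflection term vanishing by the boundary condition), and identify the law of the solution with $\rho_t^\Eps$ through uniqueness for the linear Fokker--Planck equation. The only difference is that you spell out the uniqueness step via a backward-dual Neumann problem, whereas the paper simply asserts uniqueness of weak solutions to the linear equation \eqref{lineareq}; your version is the more complete one on that point.
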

\begin{proof}
	Consider the following linear PDE
	\begin{eqnarray}\label{lineareq}
	\left\{\begin{array}{l}
	\partial_t\rho_t^\varepsilon=\nu\triangle \rho_t^\varepsilon-\nabla \cdot[\rho_t^\varepsilon V_\varepsilon [g_t]]\,, \quad x\in D\,, \quad t>0\,,\\
	\rho_t(x)|_{t=0}=\rho_0(x)\,,~x\in D \mbox{ and } \langle \nu\nabla\rho_t^\varepsilon-\rho_t V_\varepsilon [g_t]\,,n\rangle=0\mbox{ on } \partial D\,,
	\end{array}\right.
	\end{eqnarray}
	where $V _\varepsilon[g_t]=\int_{D}F_\varepsilon(x-y)g_t(y)dy$, $g_t(x)\in L^\infty\left(0, T; L^\infty(D)\right)$ is a given function. According to Lemma \ref{L1} $(i)$, the term $V_\varepsilon[g_t]$ is bounded, thus 
	the linear PDE \eqref{lineareq} has a unique weak solution in $L^\infty(0,T;L^1(D))$. 
	
	Next let $V_\varepsilon[\rho_t^\varepsilon]=\int_{D}F_\varepsilon(x-y)\rho_t^\varepsilon(y) dy$, where $\rho_t^\varepsilon\in L^\infty\left(0, T; L^\infty(D)\right)$ is a weak solution to \eqref{RPDE}.   According to \cite[Theorem 1.1]{sznitman1984nonlinear}, the following SDE
	\begin{eqnarray}\label{RSDElinear}
	\left\{\begin{array}{l}
	{Y}_t^{\varepsilon}={Y}_0+\int_{0}^tV_\varepsilon[\rho_t^\varepsilon]({Y}_s^\varepsilon) \ds+\sqrt{2\nu}{B}_t-\tilde R_t^{\varepsilon}\,, \quad t>0\,,
	\\[2pt]
	\tilde R_t^{\varepsilon}=\int_0^tn(Y_s^{\varepsilon}) {\, \rm d}|\tilde R^{\varepsilon}|_s\,,\quad |\tilde R^{\varepsilon}|_t=\int_0^t\textbf{1}_{\partial D}(Y_s^{\varepsilon}) {\, \rm d}|\tilde R^{\varepsilon}|_s\,,\\
	\end{array}\right.
	\end{eqnarray}
	has a strong solution $Y_t^\varepsilon$ with density $\bar\rho_t^\varepsilon$ since $V_\varepsilon[\rho_t^\varepsilon]$ is smooth. 
	For $\phi\in C^\infty(\overline D)$ satisfying $\langle \nabla\phi(x),n(x) \rangle=0$ for $x\in\partial D$, one applies It\^{o}'s formula:
	\begin{align}\label{ito}
	\phi(Y_t^\varepsilon)&=\phi(Y_0)+\int_0^t\langle \nabla\phi(Y_s^\varepsilon), {\, \rm d} Y_s^\varepsilon\rangle+\frac{1}{2}\int_0^t (d Y_s^\varepsilon)^T\nabla^2 \phi(Y_s^\varepsilon) {\, \rm d} Y_s^\varepsilon \notag\\
	&=\phi(Y_0)+\int_0^t\langle \nabla\phi(Y_s^\varepsilon),V_\varepsilon[\rho_t^\varepsilon]({Y}_s^\varepsilon)\rangle \ds+\sqrt{2\nu}\int_0^t\langle \nabla\phi(Y_s^\varepsilon), {\, \rm d}B_s\rangle \notag\\
	&\quad -\int_0^t\langle \nabla\phi(Y_s^\varepsilon), {\, \rm d}\tilde R_s^\varepsilon\rangle +\nu\int_0^t\Delta\phi(Y_s^\varepsilon) \ds\,.
	\end{align}
	Using the boundary condition, one has
	\begin{equation*}
	\int_0^t\langle \nabla\phi(Y_s^\varepsilon), {\, \rm d}\tilde R_s^\varepsilon\rangle =\int_0^t\langle \nabla\phi(Y_s^\varepsilon),n(Y_s^\varepsilon)\rangle {\, \rm d}|\tilde R^\varepsilon|_s=0\,.
	\end{equation*}
	Taking expectation on \eqref{ito}, it follows from above that
	\begin{align*}
	\int_{D}\bar\rho_t^\varepsilon(x)\phi(x) \dx
	&= \int_{D}\rho_0(x)\phi(x) \dx
	+\nu\int_0^t\int_{D}  \Delta \phi(x)\bar\rho_s^\varepsilon(x) \dx\ds
	+ \int_0^t\int_{D}\bar\rho_s^\varepsilon(x)V_\varepsilon[\rho_t^\varepsilon]\cdot\nabla \phi(x) \dx\ds\,.
	\end{align*}
	This implies that $\bar\rho_t^\varepsilon$ is a weak solution to \eqref{lineareq} associated to $V_\varepsilon[\rho_t^\varepsilon]$. Notice that $\rho_t^\varepsilon$ is also a weak solution to \eqref{lineareq} associated to $V_\varepsilon[\rho_t^\varepsilon]$. By the uniqueness of weak solution to \eqref{lineareq}, we obtain $\rho_t^\varepsilon=\bar\rho_t^\varepsilon$.  Hence we have proved that there exists a process $Y_t^\varepsilon$ with density $\rho_t^\varepsilon$， which is a strong solution to the regularized mean-field dynamics \eqref{RSDE}.
	%
	%
\end{proof}

\subsection{The original system}
Now we show the proof of Theorem~\ref{mainthm}, which asserts the existence of solutions to the SDE~\eqref{SDE'} and the KS equation~\eqref{PDE}.

\begin{proof}[Proof of Theorem \ref{mainthm}] 
	Note that the existence of weak solutions to the KS equation \eqref{PDE} follows from applying the It\^{o}'s formula on SDE~\eqref{SDE'}. Thus we focus on proving the existence of a strong solution to the SDE~\eqref{SDE'}, which is obtained by first constructing approximate solutions to the regularized SDE~\eqref{RSDE} and then passing to the limit. 
	
	
	To this end, we let $\varepsilon>\varepsilon'>0$ be arbitrary and $F_\Eps, F_{\Eps'}$ be the corresponding regularized forces. 
	Denote  $\rho_t^\varepsilon$, $\rho_t^{\varepsilon'}$ are two weak solutions to \eqref{RPDE} with the same initial condition $\rho_0\in L^\infty(D)$. Then there exists $T > 0$ independent of $\Eps, \Eps'$ such that $\norm{\rho_t^\varepsilon}_{L^\infty(D)}$ and $\norm{\rho_t^{\varepsilon'}}_{L^\infty(D)}$ are uniformly bounded in $\varepsilon$ and $\varepsilon'$  on $[0, T]$.  According to Proposition \ref{relation1}, we can construct two processes ${Y}^\varepsilon_t$, ${Y}^{\varepsilon'}_t$ satisfying \eqref{RSDE} with the same initial data ${Y}_0$ and Brownian motion $B_t$,  which have density functions $\rho_t^\varepsilon$ and $\rho_t^{\varepsilon'}$ respectively. 
	We shall show that there exist constants $C$, $C_T$, and $\varepsilon_0$ depending only on $D$, $T$ and $\norm{\rho_0}_\infty$ such that if $0 < \Eps' < \varepsilon< \varepsilon_0$, then
	\begin{align}\label{cauchy}
	\sup\limits_{t\in[0,T]}\mathcal {W}_\infty(\rho_t^\varepsilon,\rho_t^{\varepsilon'})
	\leq 
	\sup\limits_{t\in[0,T]}\PP\mbox{-ess sup }|{Y}^\varepsilon_t-{Y}^{\varepsilon'}_t| \leq C_T\omega(\varepsilon)^{\frac{1}{2}e^{-CT}}\,.
	\end{align}
	Indeed, by applying It\^{o}'s formula to $Y^\Eps_t - Y^{\Eps'}_t$, we have
	\begin{align*} 
	|Y_t^\varepsilon-Y_t^{\varepsilon'}|^2
	&=2\int_0^t\la Y_s^\varepsilon-Y_s^{\varepsilon'},F_\varepsilon\ast\rho_s^\varepsilon(Y_s^\varepsilon)-F_{\varepsilon'}\ast\rho_s^{\varepsilon'}(Y_s^{\varepsilon'})\ra \ds  \nn
	\\ 
	&\quad 
	-2\int_0^t\la Y_s^\varepsilon-Y_s^{\varepsilon'},n(Y_s^{\varepsilon})\ra {\rm d}|\tilde R^\varepsilon|_s-2\int_0^t
	\la Y_s^{\varepsilon'}-Y_s^{\varepsilon},n(Y_s^{\varepsilon'})\ra {\rm d} |\tilde R^{\varepsilon'}|_s \,.
	\end{align*}
	By the convexity of the domain $D$, one has
	\begin{align*}
	(x-y)\cdot n(x)\geq 0\quad \mbox{for any } x\in\partial D\mbox{ and } y\in \overline D\,.
	\end{align*}
	Consequently, 
	\begin{align}\label{goodsign}
	\la Y_s^\varepsilon-Y_s^{\varepsilon'},n(Y_s^{\varepsilon})\ra 
	\geq 0\,,
	\quad 
	{\rm d}|\tilde R^\varepsilon|_s \mbox{ almost surely}\,,
	\end{align}
	and
	\begin{align}\label{goodsign1}
	\la Y_s^{\varepsilon'}-Y_s^{\varepsilon},n(Y_s^{\varepsilon'})\ra  
	\geq 0,
	\quad 
	{\rm d}|\tilde R^{\varepsilon'}|_s \mbox{ almost surely} \,.
	\end{align}
	Therefore, by Lemma \ref{QLe}, we have
	\begin{align*} 
	|Y_t^\varepsilon-Y_t^{\varepsilon'}|^2
	&\leq 
	2\int_0^t\langle Y_s^\varepsilon-Y_s^{\varepsilon'},F_\varepsilon\ast\rho_s^\varepsilon(Y_s^\varepsilon)-F_{\varepsilon'}\ast\rho_s^{\varepsilon'}(Y_s^{\varepsilon'})\rangle \ds \nn
	\\
	&\leq 2\int_0^t | Y_s^\varepsilon-Y_s^{\varepsilon'}| \left|F_\varepsilon\ast\rho_s^\varepsilon(Y_s^\varepsilon)-F_{\varepsilon'}\ast\rho_s^{\varepsilon'}(Y_s^{\varepsilon'})\right| \ds\notag \\
	&\leq 2 \int_0^t | Y_s^\varepsilon-Y_s^{\varepsilon'}| \left( C\omega(\varepsilon)+C\omega(\PP\mbox{-ess sup }| Y_s^\varepsilon-Y_s^{\varepsilon'}| )\right) \ds\notag \\
	&\leq C\omega(\varepsilon) T+C\int_0^t | Y_s^\varepsilon-Y_s^{\varepsilon'}| \omega(\PP\mbox{-ess sup }| Y_s^\varepsilon-Y_s^{\varepsilon'}| ) \ds\,,
	\end{align*}
	where $C$ only depends on $D$, $\norm{\rho_t^\varepsilon}_\infty$,  and $\norm{\rho_t^{\varepsilon'}}_\infty$.
	Hence, by Lemma \ref{L2}, there exist constants $C_T$, $\varepsilon_0$ depending only on $C$ and $T$ such that if $\varepsilon< \varepsilon_0$, then $\omega(\varepsilon)<\varLambda_0$ and the following holds
	\begin{align} \label{part2}
	\sup\limits_{t\in[0,T]}\PP\mbox{-ess sup }|{Y}^\varepsilon_t-{Y}^{\varepsilon'}_t| 
	\leq 
	C_T\omega(\varepsilon)^{\frac{1}{2}e^{-CT}},
	\end{align}
	Therefore \eqref{cauchy} holds. Hence
	there exists a limiting stochastic process ${Y}_t \in C([0,T],D)$ such that
	\begin{align}\label{caucy1}
	\sup\limits_{t\in[0,T]}\PP\mbox{-ess sup }|{Y}^\varepsilon_t-Y_t| \rightarrow0
	\qquad
	\mbox{ as }\varepsilon\rightarrow0 \,.
	\end{align}
	Assume that $Y_t$ has the marginal density $\rho_t$. Then by the bound
	\begin{align*}
	\sup\limits_{t\in[0,T]} \mathcal{W}_\infty(\rho_t^\varepsilon,\rho_t)\leq \sup\limits_{t\in[0,T]}\PP\mbox{-ess sup }|{Y}^\varepsilon_t-{Y}_t| \,,
	\end{align*}
	we deduce that 
	\begin{align*}
	\sup\limits_{t\in[0,T]} \mathcal{W}_\infty(\rho_t^\varepsilon,\rho_t)\rightarrow0 
	\qquad
	\mbox{ as }\varepsilon\rightarrow0\,,
	\end{align*}
	and $\rho_t\in L^\infty\left(0, T; L^\infty(D)\right)$.
	
	We are left to show that there exists $\widetilde R_t$ such that $(Y_t, \widetilde R_t)$ is a strong solution to the SDE~\eqref{SDE'}. 
	By Lemma \ref{L2} and \eqref{caucy1},
	\begin{align} \label{convg:Y-Eps}
	\left|\int_0^t(F_\varepsilon\ast \rho_s^\varepsilon(Y_s^\varepsilon)-F\ast \rho_s(Y_s)) \ds\right|
	\leq 
	C\int_{0}^{t}\omega(\PP\mbox{-ess sup }|{Y}^{\varepsilon}_s-{Y}_s|) \ds+ C\omega(\varepsilon) t\rightarrow0
	\quad 
	\mbox{ uniformly in $t$}
	\end{align}
	as $\Eps \to 0$.
	We define the boundary process $\tilde R_t$ as
	\begin{equation*}
	\tilde R_t
	:= - \vpran{Y_t-Y_0-\int_0^tF\ast\rho_s(Y_s)\ds-\sqrt{2\nu} B_t}\, ,
	\end{equation*}
	such that $(Y_t, \widetilde R_t)$ is the strong solution to the SDE~\eqref{SDE'}. The properties of $R_t$ in~\eqref{SDE'} are verified in the same way as in \cite[Step \textbf{B} on page 13]{choi2016propagation}, whose details are omitted here. 
	%
	%
	%
\end{proof}

Similar to Proposition \ref{relation1}, we show that any weak solution $\rho_t\in L^\infty\left(0, T; L^\infty(D)\right)$ to \eqref{PDE} can be represented as the family of time marginal density of a solution to \eqref{SDE'}.
\begin{proposition}\label{relation}
	If $\rho_t\in L^\infty\left(0, T; L^\infty(D)\right)$ is solution to \eqref{PDE} with the initial data $\rho_0\in L^\infty(D)$, then given any random variable $Y_0$ with the density $\rho_0$, there is a process $Y_t$ with the marginal density $\rho_t$ satisfying \eqref{SDE'} with the initial data $Y_0$.
\end{proposition}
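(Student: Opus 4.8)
The plan is to follow the template of Proposition~\ref{relation1}, freezing the drift and transferring the loss of regularity caused by the singular force onto the quasi-log-Lipschitz estimate of Lemma~\ref{QLe}. Given the weak solution $\rho_t$ to~\eqref{PDE}, I would set the frozen velocity field $V[\rho_t](x):=F\ast\rho_t(x)=\int_D F(x-y)\rho_t(y)\dy$. By Lemma~\ref{L1}~(a) (the bound there holds verbatim for $F=F_0$), this field is bounded on $[0,T]$ by $C\norm{\rho}_{L^\infty(0,T;L^\infty(D))}$, so the linear Fokker--Planck equation
\[
\partial_t\bar\rho=\nu\triangle\bar\rho-\nabla\cdot\bigl[\bar\rho\,V[\rho_t]\bigr],\qquad \bar\rho|_{t=0}=\rho_0,
\]
with no-flux boundary condition $\langle\nu\nabla\bar\rho-\bar\rho\,V[\rho_t],n\rangle=0$ on $\partial D$, has a unique weak solution in $L^\infty(0,T;L^1(D))$, exactly as for~\eqref{lineareq}. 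The key point is that $\rho_t$ itself is one such solution, since~\eqref{PDE} is precisely this frozen-drift equation with drift $F\ast\rho_t$.

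The heart of the argument, and the step that genuinely departs from Proposition~\ref{relation1}, is to realize this frozen but singular drift by a reflecting diffusion. Since $V[\rho_t]$ is not Lipschitz, Sznitman's theorem \cite{sznitman1984nonlinear} does not apply directly, so I would instead regularize: for each $\varepsilon>0$ the smooth field $V_\varepsilon[\rho_t]=F_\varepsilon\ast\rho_t$ produces, via \cite{sznitman1984nonlinear}, a strong reflecting solution $Y_t^\varepsilon$ of
\[
Y_t^\varepsilon=Y_0+\int_0^t F_\varepsilon\ast\rho_s(Y_s^\varepsilon)\ds+\sqrt{2\nu}B_t-\tilde R_t^\varepsilon,
\]
driven by the prescribed $Y_0$ and $B_t$. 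Applying It\^{o}'s formula to $|Y_t^\varepsilon-Y_t^{\varepsilon'}|^2$ and discarding the two boundary terms, which carry a favorable sign by the convexity of $D$ just as in~\eqref{goodsign}--\eqref{goodsign1}, reduces everything to the drift difference $F_\varepsilon\ast\rho_s(Y_s^\varepsilon)-F_{\varepsilon'}\ast\rho_s(Y_s^{\varepsilon'})$. Invoking Lemma~\ref{QLe} with the \emph{same} density $\rho_1=\rho_2=\rho_s$ bounds this difference by $C\omega(\varepsilon)+C\omega(\PP\mbox{-ess sup }|Y_s^\varepsilon-Y_s^{\varepsilon'}|)$, so that $f_\varepsilon(t):=\PP\mbox{-ess sup }|Y_t^\varepsilon-Y_t^{\varepsilon'}|$ meets the hypothesis of the Gronwall-type Lemma~\ref{L2}. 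This yields the Cauchy estimate $\sup_{t\in[0,T]}\PP\mbox{-ess sup }|Y_t^\varepsilon-Y_t^{\varepsilon'}|\leq C_T\omega(\varepsilon)^{\frac{1}{2}e^{-CT}}$ and hence a limiting process $Y_t\in C([0,T],\overline D)$; the limiting boundary process is recovered by setting $\tilde R_t:=-\bigl(Y_t-Y_0-\int_0^t F\ast\rho_s(Y_s)\ds-\sqrt{2\nu}B_t\bigr)$, whose required properties are checked as in \cite[Step \textbf{B}]{choi2016propagation}. This is the frozen-drift analogue of the construction carried out in the proof of Theorem~\ref{mainthm}.

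To finish, let $\bar\rho_t$ denote the marginal density of $Y_t$. Applying It\^{o}'s formula to $\phi(Y_t)$ for test functions $\phi\in C^\infty(\overline D)$ with $\langle\nabla\phi,n\rangle=0$ on $\partial D$ and taking expectations --- the boundary integral $\int_0^t\langle\nabla\phi(Y_s),n(Y_s)\rangle\,{\rm d}|\tilde R|_s$ vanishing exactly as in Proposition~\ref{relation1} --- shows that $\bar\rho_t$ is a weak solution of the frozen-drift equation with drift $F\ast\rho_t$. Since $\rho_t$ solves the same linear equation, the uniqueness from the first step forces $\bar\rho_t=\rho_t$, so that $Y_t$ has marginal density $\rho_t$ and solves~\eqref{SDE'}. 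I expect the main obstacle to lie in the middle step: justifying that the regularized reflecting diffusions converge and, more delicately, that the drift integrals $\int_0^t F_\varepsilon\ast\rho_s(Y_s^\varepsilon)\ds$ converge to $\int_0^t F\ast\rho_s(Y_s)\ds$ uniformly in $t$ (which again rests on Lemma~\ref{QLe} and Lemma~\ref{L2}, cf.~\eqref{convg:Y-Eps}) and that the identified $\tilde R_t$ genuinely satisfies the reflection conditions in~\eqref{SDE'}.
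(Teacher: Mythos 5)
Your proposal is correct and follows essentially the same route as the paper: freeze the drift as $V[\rho_t]=F\ast\rho_t$, use boundedness and uniqueness of the resulting linear no-flux Fokker--Planck equation, construct the reflecting diffusion, and identify its marginal with $\rho_t$ via It\^{o}'s formula (the paper simply states that the proof of Proposition~\ref{relation1} carries over with $V_\varepsilon$ replaced by $V$). In fact you go further than the paper by spelling out the one step where the analogy with Proposition~\ref{relation1} is not literal --- Sznitman's theorem requires a smooth drift, so the reflecting diffusion with the singular frozen drift must be built by the regularization-and-Cauchy argument of Theorem~\ref{mainthm} (Lemmas~\ref{QLe} and~\ref{L2} with $\rho_1=\rho_2=\rho_s$) --- which is exactly the right way to fill in the details the paper omits.
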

\begin{proof}
	Proposition \ref{relation1} is a regularized version of this proposition, so the proof is almost the same. To this end, we change $V_\varepsilon[g_t]$ into $V [g_t]=\int_{D}F(x-y)g_t(y)\dy$. The details are omitted here.
\end{proof}

Using Proposition \ref{relation}, we prove the stability estimates  in Theorem \ref{mainthm1}.
\begin{proof}[Proof of Theorem \ref{mainthm1}]
	
	
	First we prove stability of the nonlinear SDE \eqref{SDE'}.
	Using similar arguments as in the proof of Theorem \ref{mainthm}, we apply It\^{o}'s formula and get
	\begin{align}\label{ito2}
	|Y_t^1-Y_t^{2}|^2&=|Y_0^1-Y_0^{2}|^2+2\int_0^t\la Y_s^1-Y_s^{2},F\ast\rho_s^1(Y_s^1)-F\ast\rho_s^{2}(Y_s^{2})\ra \ds\notag
	\\
	&\quad
	-2\int_0^t\la Y_s^1-Y_s^{2},n(Y_s^{1})\ra {\, \rm d}|\tilde R^1|_s-2\int_0^t
	\la Y_s^{2}-Y_s^{1},n(Y_s^{2})\ra {\, \rm d}|\tilde R^{2}|_s\notag
	\\
	&\leq |Y_0^1-Y_0^{2}|^2+2\int_0^t\la Y_s^1-Y_s^{2},F\ast\rho_s^1(Y_s^1)-F\ast\rho_s^{2}(Y_s^{2})\ra \ds\,.
	\end{align}
	Then it follows from Lemma \ref{QLe} that there exists a constant $C$ depending only on $\|\rho^1\|_{L^\infty(0,T; {L^\infty(D)})}$ and $\|\rho^2\|_{L^\infty(0,T; {L^\infty (D)})}$
	such that
	\begin{align}\label{diffpath}
	|Y_t^1-Y_t^{2}|^2\leq |Y_0^1-Y_0^{2}|^2+C\int_{0}^{t}| Y_s^1-Y_s^{2}|\omega(\PP\mbox{-ess sup }| Y_s^1-Y_s^2| ) \ds\,.
	\end{align}
	The bound of $|Y_t^1-Y_t^{2}|$ is derived by considering three possible cases:
	
	\textit{Case 1:} There exists a small enough constant $C_0(T) $ such that $\PP\mbox{-ess sup }|{Y}^1_0-{Y}^2_0|\leq C_0(T)$. Here $C_0(T)$ is determined by Lemma  \ref{L2}. Actually, we can treat $\PP\mbox{-ess sup }|Y_t^1-Y_t^{2}|$ as $f_\varepsilon(t)$ in Lemma \ref{L2} and $|Y_0^1-Y_0^{2}|^2$ as $C\omega(\varepsilon)T$. Suppose that $\PP\mbox{-ess sup }|{Y}^1_0-{Y}^2_0|<C_0(T)<1$ sufficiently small. Then there exists two constants $C_T$ and $0<\varLambda _0<\frac{1}{2}$ depending only on $C$ and $T$ such that if $|Y_0^1-Y_0^{2}|^2<C_0(T)^2<\varLambda_0$, then
	\begin{eqnarray}\label{87}
	\PP\mbox{-ess sup }|{Y}^1_t-{Y}^2_t|\leq C_T \{\PP\mbox{-ess sup }|{Y}^1_0-{Y}^2_0|\}^{\exp(-CT)}<1 \quad \mbox ~{for~ all~} t\in [0, T]\,.
	\end{eqnarray}
	
	\textit{Case 2:} If $\PP\mbox{-ess sup }|{Y}^1_0-{Y}^2_0|\geq C_0(T)$ and  $\PP\mbox{-ess sup }|{Y}^1_t-{Y}^2_t|\geq C_0(T)$ for any $t\in [0,T]$, then
	by the definition of $\omega(r)$,  one  obtains that there exists a constant $C_1$ depending only on $C_0(T)$ such that 
	\begin{equation*}
	\omega(\PP\mbox{-ess sup }|{Y}^1_t-{Y}^2_t|)\leq C_1 \PP\mbox{-ess sup }|{Y}^1_t-{Y}^2_t|\quad \mbox ~{for~ all~} t\in [0, T]\,.
	\end{equation*}
	Thus it follows from \eqref{diffpath} that
	\begin{eqnarray}\label{diffpathre}
	|{Y}^1_t-{Y}^2_t|^2 \leq |{Y}^1_0-{Y}^2_0|^2+C_1C\int_0^t|{Y}^1_s-{Y}^2_s| \PP\mbox{-ess sup }|{Y}^1_s-{Y}^2_s|\ds\,.
	\end{eqnarray}
	Applying the Gronwall inequality, one has for any $t\in [0, T]$,
	\begin{align}\label{stabilityest3}
	\PP\mbox{-ess sup }|{Y}^1_t-{Y}^2_t|&\leq \PP\mbox{-ess sup }|{Y}^1_0-{Y}^2_0|\left(1+CC_1Te^{CC_1T}\right)^\frac{1}{2}\,.
	\end{align}
	
	\textit{Case 3:} If $\PP\mbox{-ess sup }|{Y}^1_0-{Y}^2_0|\geq C_0(T)$ and there exists a $t_0\in (0, T)$ such that $\PP\mbox{-ess sup }|{Y}^1_t-{Y}^2_t|\geq C_0(T)$ for $t\in [0, t_0)$ and $\PP\mbox{-ess sup }|{Y}^1_{t_0}-{Y}^2_{t_0}|= C_0(T)$.
	In this case, the estimate for the interval $t\in [0, t_0)$ is reduced to the \textit{Case $2$} and one has
	\begin{equation}\label{stabilityest4}
	\PP\mbox{-ess sup }|{Y}^1_t-{Y}^2_t|\leq \PP\mbox{-ess sup }|{Y}^1_0-{Y}^2_0|\left(1+CC_1Te^{CC_1T}\right)^\frac{1}{2}.
	\end{equation}
	Over the interval $[t_0,T]$, choosing $t_0$ as a new initial time and repeating the proof in Case 1 gives 
	\begin{eqnarray}\label{stabilityest5}
	\PP\mbox{-ess sup }|{Y}^1_t-{Y}^2_t|\leq C_T \{\PP\mbox{-ess sup }|{Y}^1_{t_0}-{Y}^2_{t_0}|\}^{\exp(-C(T-t_0))} \quad \mbox ~{for~ all~} t\in [t_0, T]\,.
	\end{eqnarray}
	By \eqref{stabilityest4} and the continuity of $\PP\mbox{-ess sup }|{Y}^1_t-{Y}^2_t|$, one has 
	\begin{equation}\label{93}
	\PP\mbox{-ess sup }|{Y}^1_{t_0}-{Y}^2_{t_0}|\leq \PP\mbox{-ess sup }|{Y}^1_0-{Y}^2_0|\left(1+CC_1Te^{CC_1T}\right)^\frac{1}{2}\,.
	\end{equation}
	Combining \eqref{stabilityest4}, \eqref{stabilityest5} and \eqref{93},  we obtain
	\begin{align}\label{stabilityest6}
	\PP\mbox{-ess sup }|{Y}^1_t-{Y}^2_t|
	\leq 
	C_2 \max \left\{\PP\mbox{-ess sup }|{Y}^1_0-{Y}^2_0|, \,\, \{\PP\mbox{-ess sup }|{Y}^1_0-{Y}^2_0|\}^{\exp(-C_3T)}\right\} \,,
	\end{align}
	for any $t\in [0, T]$. Combining \eqref{87}, \eqref{stabilityest3} and  \eqref{stabilityest6} finishes the proof of \eqref{stab1}.

	
	Next we prove the stability of the KS equation \eqref{PDE}.
	Let $\rho^1, \rho^2 \in L^\infty\left(0, T; L^\infty(D)\right)$ be two weak solutions to \eqref{PDE} with the initial conditions $\rho_0^1, \rho_0^2 \in L^\infty(D)$ respectively. Take two initial random variables ${Y}_0^1$ and ${Y}_0^2$ such that
	\begin{equation}\label{initialcondiconst}
	\mathcal{W}_\infty (\rho_0^1,~\rho_0^2)=\PP\mbox{-ess sup }|{Y}^1_0-{Y}^2_0| \,.
	\end{equation}
	By Proposition~\ref{relation}, we build two self-consistent SDEs such that
	\begin{align*}
	\begin{cases}
	{Y}_t^i={Y}_0^i+\int_{0}^t\int_{D}F({Y}_s^i-y)\rho_s^i(y) \dy\ds+\sqrt{2\nu}{B}_t-\tilde R_t^i\,, \quad t>0,~i=1,2,
	\\[3pt]
	\tilde R_t^i=\int_0^tn(Y_s^i) {\, \rm d}|\tilde R^i|_s\,,
	\quad 
	|\tilde R^i|_t=\int_0^t\textbf{1}_{\partial D}(Y_s^i) {\, \rm d}|\tilde R^i|_s \,,
	\end{cases}
	\end{align*}
	which have unique strong solutions $({Y}^i_t)_{t\geq0}$ with $(\rho^i_t)_{t\geq0}$ being the marginal density of $({Y}^i_t)_{t\geq0}$. 
	Hence it follows from \eqref{stab1} and \eqref{initialcondiconst} that
	\begin{align*}
	\sup\limits_{t\in{[0,T]}}\mathcal{W}_\infty (\rho_t^1,\rho_t^2)&\leq \sup\limits_{t\in{[0,T]}}\PP\mbox{-ess sup }|{Y}^1_t-{Y}^2_t|\notag\\
	&\leq C_1\max \left\{\PP\mbox{-ess sup }|{Y}^1_0-{Y}^2_0|, \{\PP\mbox{-ess sup }|{Y}^1_0-{Y}^2_0|\}^{\exp(-C_2T)}\right\}\notag \\
	&= C_1 \max \left\{\mathcal{W}_\infty (\rho_0^1,~\rho_0^2), \{\mathcal{W}_\infty (\rho_0^1,~\rho_0^2)\}^{\exp(-C_2T)}\right\},
	\end{align*}
	which leads to the desired stability estimate \eqref{stab2}.
\end{proof}
\section{Propagation of chaos} \label{sec:prop-chaos}
In this section, we show an estimate of the collision time between particles and prove the propagation of chaos result stated in Theorem \ref{mainthm2}.

\subsection{Collision time between particles} 
It is known that in $\R^2$, if $8\pi\nu<\norm{\rho_0}_1=1$, then solutions to the Keller-Segel equation \eqref{PDE} concentrate within a finite time \cite{dolbeault2004optimal}. In this part, we show a similar bound for the expectation of the collision time for the interacting particle system \eqref{particle}. 
\begin{thm}\label{collison}
	Assume $d=2$ and $\{X_0^i\}_{i=1}^N$ be $N$ i.i.d. random variables  with a common density $\rho_0$. Let $\{(X_t^i,R_t^i)\}_{i=1}^N$ be weak solutions to \eqref{particle} with the initial data $\{X_0^i\}_{i=1}^N$. For any fixed $T>0$ and $8\pi\nu<1$, we define
	\begin{equation*}
	A(t):=\inf\limits_{s\in[0,t]}\min\limits_{i\neq j}|X_s^i-X_s^j|\,,
	\qquad
	\tau_\varepsilon
	=\begin{cases}
	\begin{aligned}
	&0  && \text{ if } \varepsilon\geq A(0) \,,\\
	&\sup\{t\wedge T: A(t)\geq \varepsilon\} && \text{ if } \varepsilon< A(0) \,,
	\end{aligned}
	\end{cases}
	\end{equation*}
	where $t \wedge T = \min\{t, T\}$. Let $\tau=\lim\limits_{\varepsilon\rightarrow 0}\tau_\varepsilon$. Then the expectation of the collision time satisfies
	\begin{equation*}
	\EE(\tau)\leq \frac{2\pi  \EE\left[|X_0^1-X_0|^2\right]}{1 -8\pi\nu} \,,
	\qquad   X_0:=\frac{1}{N}\sum\limits_{i=1}^NX_0^i\,.
	\end{equation*}
\end{thm}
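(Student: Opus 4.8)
The plan is to use the total variance of the particle cloud about its center of mass as a Lyapunov functional and to exploit the special algebraic structure of the two-dimensional Newtonian force. Fix the center of mass $X_0 = \frac1N\sum_i X_0^i$; since $D$ is convex and each $X_0^i\in\overline D$, we have $X_0\in\overline D$. Set
\[
V_t := \sum_{i=1}^N |X_t^i - X_0|^2
\]
and work on the random interval $[0,\tau_\varepsilon]$, where by construction all pairwise distances stay $\geq\varepsilon$, so that the interaction force is bounded and $F$ agrees with any regularization $F_{\varepsilon'}$, $\varepsilon'\leq\varepsilon$. This is what allows Itô's formula to be applied rigorously to the stopped process $V_{t\wedge\tau_\varepsilon}$ in spite of the singularity of $F$.

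Applying Itô's formula to each $|X_t^i - X_0|^2$ and summing, I would isolate three contributions. First, the interaction term $\frac{2}{N-1}\sum_i\sum_{j\neq i}(X_t^i - X_0)\cdot F(X_t^i - X_t^j)$; antisymmetrizing in $i\leftrightarrow j$ via $F(-x)=-F(x)$ turns this into $\frac{1}{N-1}\sum_{i\neq j}(X_t^i - X_t^j)\cdot F(X_t^i - X_t^j)$, and the crucial two-dimensional identity $x\cdot F(x) = -C_* = -\tfrac{1}{2\pi}$ collapses the whole sum to the constant $-\tfrac{N}{2\pi}$. Second, the Itô correction from the $\sqrt{2\nu}B^i$ terms contributes $2\nu d\,N = 4\nu N$ in dimension $d=2$. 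Third, the reflection term $-2\sum_i (X_t^i - X_0)\cdot n(X_t^i)\,\mathrm{d}|R^i|_s$ is nonpositive, because the convexity of $D$ gives $(x - X_0)\cdot n(x)\geq 0$ for $x\in\partial D$ and $X_0\in\overline D$ — the same sign observation recorded in~\eqref{goodsign}.

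Collecting these, the deterministic drift of $V_{t\wedge\tau_\varepsilon}$ is $-\tfrac{N}{2\pi}(1-8\pi\nu)\dt$, which is strictly negative precisely under the hypothesis $8\pi\nu<1$. Taking expectations annihilates the Brownian martingale (every quantity is bounded since $D$ is bounded) and only lowers the value through the nonpositive reflection term, whence
\begin{align*}
0 \leq \EE\big[V_{t\wedge\tau_\varepsilon}\big] \leq \EE[V_0] - \frac{N}{2\pi}(1-8\pi\nu)\,\EE\big[t\wedge\tau_\varepsilon\big].
\end{align*}
This gives $\EE[t\wedge\tau_\varepsilon] \leq \frac{2\pi\,\EE[V_0]}{N(1-8\pi\nu)}$. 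By exchangeability of the $X_0^i$, each summand of $V_0$ has the same expectation, so $\EE[V_0] = N\,\EE[|X_0^1 - X_0|^2]$, yielding the clean bound $\EE[t\wedge\tau_\varepsilon]\leq \frac{2\pi\,\EE[|X_0^1 - X_0|^2]}{1-8\pi\nu}$ uniformly in $t$ and $\varepsilon$.

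Finally I would pass to the limit. Letting $t\to\infty$ with $\tau_\varepsilon\leq T$ and monotone convergence transfers the bound to $\EE[\tau_\varepsilon]$; since $A(t)$ is nonincreasing in $t$, the family $\tau_\varepsilon$ increases as $\varepsilon\downarrow 0$, so monotone convergence once more gives $\EE[\tau]=\lim_{\varepsilon\to0}\EE[\tau_\varepsilon]\leq \frac{2\pi\,\EE[|X_0^1 - X_0|^2]}{1-8\pi\nu}$. The main obstacle here is one of rigor rather than of ideas: because $F$ is singular, the Itô computation is legitimate only away from collisions, which is exactly why the argument must be run on the stopped interval $[0,\tau_\varepsilon]$ and the identity $x\cdot F(x)=-\tfrac{1}{2\pi}$ invoked only where pairwise distances exceed $\varepsilon$. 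Everything else — the symmetrization, the convexity sign of the boundary term, and the moment identity — is routine.
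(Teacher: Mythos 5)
Your proposal is correct and follows essentially the same route as the paper: Itô's formula applied to $\sum_i|X_t^{i}-X_0|^2$ on $[0,\tau_\varepsilon]$, symmetrization together with the identity $x\cdot F(x)=-\tfrac{1}{2\pi}$ in $d=2$, the convexity sign of the reflection term, exchangeability for the initial moment, and monotone convergence in $\varepsilon$. The only cosmetic difference is that you stop at $t\wedge\tau_\varepsilon$ and let $t\to\infty$, whereas the paper evaluates directly at $t=\tau_\varepsilon$.
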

\begin{proof} Since system \eqref{particle} has a weak solution until the collision time $\tau$ 
	and $F_\varepsilon(x)=F(x)$ for any $|x|>\varepsilon$, we know that $X_t^i\equiv X_t^{i,\varepsilon}$ and $F_\varepsilon(x)=F(x)$ for $t\in [0,\tau_\varepsilon]$, where $\{X_t^{i,\varepsilon}\}_{i=1}^N$ is a global solution to the regularized particle system \eqref{Rparticle}. Let us denote $$X_0:=\frac{1}{N}\sum\limits_{i=1}^NX_0^i\,.$$
	By the It\^{o}'s formula and \eqref{Rparticle},  for $t\in[0,\tau_\varepsilon]$, one has
	\begin{align}\label{1collap}
	|{X}^{i,\varepsilon}_t-X_0|^2
	&= |{X}_0^{i}-X_0|^2+ 2\int_0^t\left \la {X}^{i,\varepsilon}_s-X_0, \,\,\frac{1}{N-1}\sum\limits_{j\neq i}^N F({X}^{i,\varepsilon}_s-{X}^{j,\varepsilon}_s)\right\ra \ds   
	\notag 
	\\
	&\quad + 2\int_0^t \la {X}^{i,\varepsilon}_s-X_0, \,\, \sqrt{2\nu} {\, \rm d}{B}_s^{i,\varepsilon}\ra
	- 2\int_0^t \la {X}^{i,\varepsilon}_s-X_0, \,\, {\, \rm d}R_s^{i,\varepsilon}\ra+4\nu t \notag 
	\\
	& \hspace{-0.3cm}
	\leq 	
	|{X}_0^{i}-X_0|^2+ 2\int_0^t\left \la {X}^{i,\varepsilon}_s-X_0, \,\,\frac{1}{N-1}\sum\limits_{j\neq i}^N F({X}^{i,\varepsilon}_s-{X}^{j,\varepsilon}_s)\right\ra \ds   
	\notag 
	\\
	&\quad + 2\int_0^t \la {X}^{i,\varepsilon}_s-X_0, \,\, \sqrt{2\nu} {\, \rm d}{B}_s^{i,\varepsilon}\ra+4\nu t\, ,
	\end{align}
	where we have used the convexity of the domain $D$ such that
	\begin{equation*}
	\int_0^t \la {X}^{i,\varepsilon}_s-X_0, {\, \rm d}R_s^i\ra
	= \int_0^t \la {X}^{i,\varepsilon}_s-X_0,  \, \, n({X}^{i,\varepsilon}_t) \ra {\, \rm d}|R^i|_s \geq 0\,.
	\end{equation*}	
	Then summing \eqref{1collap} for all $i$'s, one has
	\begin{align}\label{2collap}\nonumber
	\sum\limits_{i=1}^N|{X}^{i,\varepsilon}_t-X_0|^2
	&\leq  \sum\limits_{i=1}^N|{X}_0^{i}-X_0|^2+
	\frac{2}{N-1}\sum\limits_{ \scriptstyle  i, j =1 \atop \scriptstyle i \neq j}^N
	\int_0^t\la {X}^{i,\varepsilon}_s -X_0, \,\, F({X}^{i,\varepsilon}_s-{X}^{j,\varepsilon}_s)\ra \ds \\
	&\quad+2\sqrt{2\nu}\sum\limits_{i=1}^N\int_0^t\la {X}^{i,\varepsilon}_s-X_0, \,\, {\, \rm d}{B}_s^{i,\varepsilon}\ra+4\nu Nt\,.
	\end{align}
	Note that $$\int^{\tau_\Eps}_0\mathbb{E}\big[|{X}^{i,\varepsilon}_t-X_0|^2\big] \dt< \infty\,,$$ 
	since $D$ is bounded, which implies that \cite[pp.28, Theorem 1]{gihman1979stochastic}
	\begin{eqnarray}\label{23collap}
	\mathbb{E}\left[\int_0^{t}\la {X}^{i,\varepsilon}_t-X_0, \,\,{\rm d} {B}_t^{i,\varepsilon}\ra \right]=0\,.
	\end{eqnarray}	
	Also note that
	\begin{align*}
	\frac{2}{N-1}\sum\limits_{ \scriptstyle  i, j =1 \atop \scriptstyle i \neq j}^N \la {X}^{i,\varepsilon}_s - X_0, F({X}^{i,\varepsilon}_s-{X}^{j,\varepsilon}_s)\ra
	= \frac{1}{N-1}\sum\limits_{ \scriptstyle  i, j 
		=1 \atop \scriptstyle i \neq j}^N \la {X}^{i,\varepsilon}_s-{X}^{j,\varepsilon}_s, \,\,  F({X}^{i,\varepsilon}_s-{X}^{j,\varepsilon}_s)\ra 
	=-\frac{N}{2\pi}\,.
	\end{align*}
	Then we take expectation on \eqref{2collap} and choose $t=\tau_\Eps\varepsilon$. By the exchangeability of $\{({X}^{i,\varepsilon}_t)_{t\geq0}\}_{i=1}^N$, one has
	\begin{eqnarray}\label{collap}
	0\leq \mathbb{E}\big[|{X}^{i,\varepsilon}_{\tau_\varepsilon}-X_0|^2\big]
	\leq  \EE\left[|{X}_0^{i}-X_0|^2\right]+
	\vpran{4\nu-\frac{1}{2\pi}}\mathbb{E}[\tau_\varepsilon]=\EE\left[|{X}_0^{1}-X_0|^2\right]+
	\vpran{4\nu-\frac{1}{2\pi}}\mathbb{E}[\tau_\varepsilon]\,.
	\end{eqnarray}
	By the positivity of left hand and $8\pi\nu<1$, we obtain
	\begin{equation*}
	\mathbb{E}[\tau_\varepsilon]\leq \frac{2\pi \EE\left[|{X}_0^{1}-X_0|^2\right]}{1 -8\pi\nu}\,.
	\end{equation*}
	Hence, the desired bound for $\tau$ holds by the monotone convergence theorem.
\end{proof}

\subsection{ Propagation of chaos and the proof of Theorem \ref{mainthm2}}

In order to prove Theorem \ref{mainthm2}, we first show the following lemma:
\begin{lem}\label{lmconsist}
	Let $\{X_t^{i,\varepsilon}\}_{i=1}^N$ be $N$ exchangeable random variables on $D$ satisfying \eqref{Rparticle}. Let $\{Y_t^{i,\varepsilon}\}_{i=1}^N$ be $N$ i.i.d. random variables on $D$ satisfying \eqref{RSDE} with the common density $\rho_t^\varepsilon\in L^\infty\left(0,T;L^\infty(D)\right)$. Then there exists a constant $C>0$ depending only on $T$, $D$ and $\norm{\rho^\varepsilon}_{L^\infty\left(0,T;L^\infty(D)\right)}$ such that
	\begin{align}\label{hn}
	\sup\limits_{i=1,\cdots,N}\left|\frac{1}{N-1}\sum\limits_{j\neq i}^NF_\varepsilon(X_t^{i,\varepsilon}-X_t^{j,\varepsilon})-\int_DF_\varepsilon(Y_t^{i,\varepsilon}-y)\rho_t^\varepsilon(y)\dy\right|\notag
	\leq  C\varepsilon^{-d}\sup\limits_{i=1,\cdots,N}|X_t^{i,\varepsilon}-Y_t^{i,\varepsilon}|+\mathcal{H}_N\,,
	\end{align}
	where $\mathcal{H}_N$ is given by
	\begin{equation*}
	\mathcal{H}_N:=\sup\limits_{i=1,\cdots,N}\left|\frac{1}{N-1}\sum\limits_{j\neq i}^NF_\varepsilon(Y_t^{i,\varepsilon}-Y_t^{j,\varepsilon})-\int_DF_\varepsilon(Y_t^{i,\varepsilon}-y)\rho_t^\varepsilon(y)\dy\right|\,,
	\end{equation*}
	and it satisfies
	\begin{equation*}
	\EE[|\mathcal{H}_N|^2]\leq  \frac{C\varepsilon^{-2(d-1)}}{N-1}\,.
	\end{equation*}
\end{lem}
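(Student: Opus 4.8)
The plan is to split the quantity inside $\sup_i|\cdots|$ by inserting the empirical average built from the i.i.d.\ copies, thereby separating the discrepancy between the two particle systems from the genuine law-of-large-numbers fluctuation. Writing
\begin{align*}
\frac{1}{N-1}\sum_{j\neq i}^N F_\varepsilon(X_t^{i,\varepsilon}-X_t^{j,\varepsilon})-\int_D F_\varepsilon(Y_t^{i,\varepsilon}-y)\rho_t^\varepsilon(y)\dy
&= \frac{1}{N-1}\sum_{j\neq i}^N\vpran{F_\varepsilon(X_t^{i,\varepsilon}-X_t^{j,\varepsilon})-F_\varepsilon(Y_t^{i,\varepsilon}-Y_t^{j,\varepsilon})}\\
&\quad + \vpran{\frac{1}{N-1}\sum_{j\neq i}^N F_\varepsilon(Y_t^{i,\varepsilon}-Y_t^{j,\varepsilon})-\int_D F_\varepsilon(Y_t^{i,\varepsilon}-y)\rho_t^\varepsilon(y)\dy},
\end{align*}
the second group is exactly the term defining $\mathcal{H}_N$. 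For the first group I would use that $F_\varepsilon$ is globally Lipschitz: Lemma~\ref{lmkenerl}(b) with $|\beta|=1$ gives $|\nabla F_\varepsilon|\leq C\varepsilon^{-d}$, so that $|F_\varepsilon(X_t^{i,\varepsilon}-X_t^{j,\varepsilon})-F_\varepsilon(Y_t^{i,\varepsilon}-Y_t^{j,\varepsilon})|\leq C\varepsilon^{-d}\vpran{|X_t^{i,\varepsilon}-Y_t^{i,\varepsilon}|+|X_t^{j,\varepsilon}-Y_t^{j,\varepsilon}|}\leq 2C\varepsilon^{-d}\sup_i|X_t^{i,\varepsilon}-Y_t^{i,\varepsilon}|$. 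Averaging over $j$ and taking the supremum over $i$ produces the first term $C\varepsilon^{-d}\sup_i|X_t^{i,\varepsilon}-Y_t^{i,\varepsilon}|$ on the right-hand side of~\eqref{hn}.

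For the fluctuation estimate I would exploit the i.i.d.\ structure of $\{Y_t^{i,\varepsilon}\}$. Fixing $i$ and conditioning on $Y_t^{i,\varepsilon}$, the remaining variables $Y_t^{j,\varepsilon}$ ($j\neq i$) are i.i.d.\ with density $\rho_t^\varepsilon$, so the summands $Z_j := F_\varepsilon(Y_t^{i,\varepsilon}-Y_t^{j,\varepsilon})-\int_D F_\varepsilon(Y_t^{i,\varepsilon}-y)\rho_t^\varepsilon(y)\dy$ are conditionally centered and independent. The cross terms then vanish and
\begin{align*}
\EE\left[\abs{\tfrac{1}{N-1}\textstyle\sum_{j\neq i}Z_j}^2 \,\middle|\, Y_t^{i,\varepsilon}\right]
= \frac{1}{(N-1)^2}\sum_{j\neq i}\EE\left[|Z_j|^2 \,\middle|\, Y_t^{i,\varepsilon}\right]
\leq \frac{1}{N-1}\int_D |F_\varepsilon(Y_t^{i,\varepsilon}-y)|^2\rho_t^\varepsilon(y)\dy,
\end{align*}
which is precisely where the single power of $N-1$ in the denominator originates.

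The main obstacle is the quantitative control of this conditional second moment, since the naive pointwise bound $|F_\varepsilon(z)|\leq|F(z)|=C_\ast|z|^{-(d-1)}$ yields a kernel $|z|^{-2(d-1)}$ that fails to be locally integrable for $d\geq 2$. I would therefore use the uniform estimate of Lemma~\ref{lmkenerl}(b) with $\beta=0$, namely $|F_\varepsilon|\leq C\varepsilon^{1-d}$, to obtain $\int_D|F_\varepsilon(Y_t^{i,\varepsilon}-y)|^2\rho_t^\varepsilon(y)\dy\leq C\varepsilon^{2(1-d)}\norm{\rho_t^\varepsilon}_{L^\infty(D)}|D|\leq C\varepsilon^{-2(d-1)}$, with $C$ depending only on $D$ and $\norm{\rho^\varepsilon}_{L^\infty\left(0,T;L^\infty(D)\right)}$; a finer split into $\{|z|\leq\varepsilon\}$ and $\{|z|>\varepsilon\}$ sharpens this but is not needed. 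Taking the outer expectation then gives, for each fixed $i$, the bound $\EE[|\tfrac{1}{N-1}\sum_{j\neq i}Z_j|^2]\leq C\varepsilon^{-2(d-1)}/(N-1)$.

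Finally, the supremum over $i$ in $\mathcal{H}_N$ is handled by exchangeability: the $N$ random variables $A_i:=\tfrac{1}{N-1}\sum_{j\neq i}Z_j$ are identically distributed, so the per-index bound just obtained holds uniformly in $i$, and $\EE[|\mathcal{H}_N|^2]=\EE[\max_i|A_i|^2]$ is controlled through $\sum_{i=1}^N\EE[|A_i|^2]$. This passage from a fixed index to the maximum is the delicate point, as bounding the maximum by the sum is essentially the only elementary device available for identically distributed but dependent variables; the singular second-moment estimate above, rather than this combinatorial step, is the genuine quantitative heart of the proof.
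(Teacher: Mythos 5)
Your decomposition, the Lipschitz estimate for the first group via Lemma~\ref{lmkenerl}(b) with $|\beta|=1$, and the treatment of the fluctuation term by conditioning on $Y_t^{i,\varepsilon}$ --- using the vanishing of the cross terms $\EE[Z_jZ_k]$ for $j\neq k$ and the uniform bound $|F_\varepsilon|\leq C\varepsilon^{1-d}$ --- reproduce the paper's proof step for step; up to that point the two arguments are identical.

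The one place where you depart from the paper is the passage from the per-index bound $\EE[|A_i|^2]\leq C\varepsilon^{-2(d-1)}/(N-1)$ to the bound on $\mathcal{H}_N=\sup_i|A_i|$, and the concrete device you propose there does not deliver the stated estimate: bounding $\EE[\max_i|A_i|^2]$ by $\sum_{i=1}^N\EE[|A_i|^2]$ costs a factor of $N$ and yields only $\EE[|\mathcal{H}_N|^2]\leq CN\varepsilon^{-2(d-1)}/(N-1)\sim C\varepsilon^{-2(d-1)}$, with no decay in $N$. That loss is fatal downstream, since the proof of Theorem~\ref{mainthm2} uses $\EE[|\mathcal{H}_N|]\leq(\EE[|\mathcal{H}_N|^2])^{1/2}\leq C\varepsilon^{-(d-1)}/\sqrt{N-1}$ and the whole rate collapses without the $1/(N-1)$. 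To be fair, the paper's own proof does not confront this point either: it proves the estimate for $i=1$, remarks that the same holds for $i=2,\dots,N$, and silently identifies the per-index bound with a bound on the supremum. If one insists on the supremum, recovering the $1/(N-1)$ decay requires more than max-by-sum, e.g.\ a higher-moment (Rosenthal/Marcinkiewicz--Zygmund) estimate $\EE[|A_1|^{2p}]\leq C_p(N-1)^{-p}\varepsilon^{-2p(d-1)}$ combined with $\EE[\max_i|A_i|^2]\leq N^{1/p}(\EE[|A_1|^{2p}])^{1/p}$ and $p\sim\log N$; alternatively, the propagation-of-chaos argument can be rerun with the average $\frac1N\sum_i|X_t^{i,\varepsilon}-Y_t^{i,\varepsilon}|^2$ in place of the supremum (the Lipschitz step already controls the force difference by this average), for which the per-index second-moment bound suffices. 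So you have correctly isolated the delicate point, but the elementary device you offer for it is precisely the one that fails.
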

\begin{proof}
	For $i=1$, we split the error
	\begin{align*}
	&\left|\frac{1}{N-1}\sum\limits_{j=2}^NF_\varepsilon(X_t^{1,\varepsilon}-X_t^{j,\varepsilon})-\int_DF_\varepsilon(Y_t^{1,\varepsilon}-y)\rho_t^\varepsilon(y)\dy\right|\notag\\
	\leq& \left|\frac{1}{N-1}\sum\limits_{j=2}^NF_\varepsilon(X_t^{1,\varepsilon}-X_t^{j,\varepsilon})-\frac{1}{N-1}\sum\limits_{j= 2}^NF_\varepsilon(Y_t^{1,\varepsilon}-Y_t^{j,\varepsilon})\right|\notag \\
	&+\left|\frac{1}{N-1}\sum\limits_{j=2}^NF_\varepsilon(Y_t^{1,\varepsilon}-Y_t^{j,\varepsilon})-\int_DF_\varepsilon(Y_t^{1,\varepsilon}-y)\rho_t^\varepsilon(y)\dy\right|
	=:|I_1|+|I_2|\,.
	\end{align*}	
	By Lemma \ref{lmkenerl} $(b)$ one has
	\begin{equation*}
	|I_1|\leq \frac{1}{N-1}\sum\limits_{j=2}^N C\varepsilon^{-d}(|X_t^{1,\varepsilon}-Y_t^{1,\varepsilon}|+|X_t^{j,\varepsilon}-Y_t^{j,\varepsilon}|)=C\frac{\varepsilon^{-d}}{N-1}\sum\limits_{j=1}^N|X_t^{j,\varepsilon}-Y_t^{j,\varepsilon}| \,.
	\end{equation*}
	This shows
	\begin{equation}\label{I_1}
	|I_1|\leq C\varepsilon^{-d}\sup\limits_{i=1,\cdots,N}|X_t^{i,\varepsilon}-Y_t^{i,\varepsilon}|\,.
	\end{equation}
	Next, we rewrite $I_2$ as
	\begin{equation*}
	I_2=\frac{1}{N-1}\sum\limits_{j=2}^NZ_j\,,
	\end{equation*}
	with 
	\begin{equation*}
	Z_j:=F_\varepsilon(Y_t^{1,\varepsilon}-Y_t^{j,\varepsilon})-\int_DF_\varepsilon(Y_t^{1,\varepsilon}-y)\rho_t^\varepsilon(y)\dy \,,
	\qquad j \geq 2 \,.
	\end{equation*}
	Since $\{Y_t^{j,\varepsilon}\}_{j=1}^N$ are i.i.d. random variables, it holds that 
	\begin{equation*}
	\EE[Z_jZ_k]=0 \,,
	\qquad j \neq k \,.
	\end{equation*}
	To see this, we consider $Y_t^{1,\varepsilon}$ is given and denote 
	\begin{equation*}
	\EE'[\cdot]=\EE[\cdot| Y_t^{1,\varepsilon}]\,.
	\end{equation*}
	Since $Z_j$ and $Z_k$ are independent for $j \neq k$, it holds that ${\EE}'[Z_jZ_k]=\EE'[Z_j]\EE'[Z_k]=0$ when $j\neq  k$. Hence
	\begin{equation*}
	\EE[Z_jZ_k]={\EE}_1{\EE}'[Z_jZ_k]=0\,,
	\qquad j \neq k \, ,
	\end{equation*}
	where ${\EE}_1$ means taking expectation on $Y_t^{1,\varepsilon}$.
	
	Hence, one concludes that
	\begin{align}\label{I_2}
	\EE[|I_2|^2]
	&\leq\frac{1}{N-1}\EE[|Z_2|^2] 
	\leq \frac{2}{N-1}\EE\left[F_\varepsilon^2(Y_t^{1,\varepsilon}-Y_t^{2,\varepsilon})+\left(\int_DF_\varepsilon(Y_t^{1,\varepsilon}-y)\rho_t^\varepsilon(y)\dy\right)^2\right]
	\leq \frac{C\varepsilon^{-2(d-1)}}{N-1}\,,
	\end{align}
	where we have used Lemma \ref{lmkenerl} $(b)$ and Lemma \ref{L1} $(a)$.
	Combining estimates \eqref{I_1} and \eqref{I_2}, we have
	\begin{align*}
	\left|\frac{1}{N-1}\sum\limits_{j=2}^NF_\varepsilon(X_t^{1,\varepsilon}-X_t^{j,\varepsilon})-\int_DF_\varepsilon(Y_t^{1,\varepsilon}-y)\rho_t^\varepsilon(y)\dy\right|
	\leq C\varepsilon^{-d}\sup\limits_{i=1,\cdots,N}|X_t^{i,\varepsilon}-Y_t^{i,\varepsilon}|+|I_2|\,,
	\end{align*}
	with the bound~\eqref{I_2} for $I_2$.
	The same estimate holds for the case $i=2,\cdots,N$.
\end{proof}

To prove the propagation of chaos, we need to introduce an 
auxiliary  stochastic mean-field dynamics $\{Y_t^i\}_{i=1}^N$ satisfying
\begin{equation}\label{SDE'system}
\left\{\begin{array}{l}
{Y}_t^i={Y}_0^i+\int_{0}^t\int_{D}F({Y}_s^i-y)\rho_s(y) \dy\ds+\sqrt{2\nu}{B}_t^i-\tilde R_t^i\,, \quad t>0\,,\quad i=1,\cdots,N,\\
\tilde R_t^i=\int_0^tn(Y_s^i) \, {\rm d}|\tilde R^i|_s,\quad |\tilde R^i|_t=\int_0^t\textbf{1}_{\partial D}(Y_s^i) \, {\rm d}|\tilde R^i|_s\,,\\
\end{array}\right.
\end{equation}
where $(\rho_t)_{t\geq0}$ is the common marginal density of $(\{{Y}_t^i\}_{i=1}^N)_{t\geq0}$ for any $t\geq 0$.  Here we set the initial date $(Y_0^i)_{i=1,\cdots, N}=(X_0^i)_{i=1,\cdots, N}$ i.i.d. sharing the same density $\rho_0(x)$. In other words here $(\{{Y}_t^i\}_{i=1}^N)_{t\geq0}$ are $N$ copies of the strong solutions to \eqref{SDE'}.  Similarly let $\{Y_t^{i,\varepsilon}\}_{i=1}^N$ be $N$ copies of the strong solutions to the regularized mean-field dynamics \eqref{RSDE}:
\begin{align} \label{RSDE'system}
\begin{cases}
{Y}_t^{i,\varepsilon}
={Y}_0^i + \int_{0}^t\int_{D}F_\varepsilon({Y}_s^{i,\varepsilon}-y)\rho_s^\varepsilon(y) \dy\ds
+\sqrt{2\nu}{B}_t ^i- \tilde R_t^{i,\varepsilon}, & t>0 \,,\quad i=1,\cdots,N,
\\[2pt]
\tilde R_t^{i,\varepsilon}
= \int_0^t n(Y_s^{i,\varepsilon}) {\, \rm d} |\tilde R^{i,\varepsilon}|_s \,, 
\quad
|\tilde R^{i,\varepsilon}|_t
=\int_0^t\textbf{1}_{\partial D}(Y_s^{i,\varepsilon}) {\, \rm d}|\tilde R^{i,\varepsilon}|_s \,.
\end{cases}
\end{align}

Finally, we present the proof of Theroem~\ref{mainthm2} regarding the propagation of chaos.
\begin{proof}[Proof of Theorem \ref{mainthm2}]
	It follows from Theorem \ref{choithm}, Theorem \ref{mainthm} and Theorem \ref{mainthm1} that there exists a weak solution $\{X_t^{i,\varepsilon}\}_{i=1}^N$ to \eqref{Rparticle} and a unique pathwise solution $\{Y_t^{i}\}_{i=1}^N$ to \eqref{SDE'system} with marginal density $\rho_t$ on the time interval $[0,T]$ for some $T > 0$, such that $\rho_t\in L^\infty\left(0, T; L^\infty(D)\right)$. This implies that we are able to define solutions for those two equations on the same probability space with the same initial condition and Brownian motion. 
	Then it follows from the same argument as in the proof of \eqref{part2}, we obtain that there exists constants $C_T$, $\varepsilon_0$ depending only on $T$, $\norm{\rho^\varepsilon}_{L^\infty\left(0, T; L^\infty(D)\right)}$ and $\norm{\rho}_{L^\infty\left(0, T; L^\infty(D)\right)}$ such that if $\varepsilon<\varepsilon_0$, then for any $1\leq i\leq N $
	\begin{equation*}
	\sup\limits_{t\in[0,T]}\PP\mbox{-ess sup }|{Y}^{i,\varepsilon}_t-Y_t^i| \leq C_T\omega(\varepsilon)^{\frac{1}{2}e^{-CT}}.
	\end{equation*}
	
	Since the  Brownian motions in \eqref{SDE'system} and \eqref{RSDE'system} are prescribed, we can assume $B_t^i=B_t^{i,\varepsilon}$, where $\{B_t^{i,\varepsilon}\}_{i=1}^N$ are used in \eqref{Rparticle}. 
	Using the similar argument as in the proof of Theorem \ref{mainthm}, it follows from It\^{o}'s formula that
	\begin{align*}
	|X_t^{i,\varepsilon}-Y_t^{i,\varepsilon}|^2&=2\int_0^t\left\la X_s^{i,\varepsilon}-Y_s^{i,\varepsilon}, \frac{1}{N-1}\sum\limits_{j\neq i}^NF_\varepsilon(X_s^{i,\varepsilon}-X_s^{j,\varepsilon})-\int_DF_\varepsilon(Y_s^{i,\varepsilon}-y)\rho_t^\varepsilon(y) \dy\right\ra \ds\notag\\
	&\quad-2\int_0^t\la X_s^{i,\varepsilon}-Y_s^{i,\varepsilon}, n(X_s^{i,\varepsilon})\ra {\, \rm d}|R^{i,\varepsilon}|_s-2\int_0^t\la Y_s^{i,\varepsilon}-X_s^{i,\varepsilon}, n(Y_s^{i,\varepsilon})\ra {\, \rm d}|\tilde R^{i,\varepsilon}|_s \notag \\
	&\leq 2\int_0^t\left \la X_s^{i,\varepsilon}-Y_s^{i,\varepsilon}, \frac{1}{N-1}\sum\limits_{j\neq i}^NF_\varepsilon(X_s^{i,\varepsilon}-X_s^{j,\varepsilon})-\int_DF_\varepsilon(Y_s^{i,\varepsilon}-y)\rho_t^\varepsilon(y) \dy\right\ra \ds\,,
	\end{align*}
	where in the second inequality we have used the convexity of the domain $D$. Applying Lemma \ref{lmconsist}, one finds
	\begin{align}\label{ito4}
	|X_t^{i,\varepsilon}-Y_t^{i,\varepsilon}|^2&\leq 2\int_0^t |X_s^{i,\varepsilon}-Y_s^{i,\varepsilon}| \left(C\varepsilon^{-d}\sup\limits_{i=1,\cdots,N}|X_s^{i,\varepsilon}-Y_s^{i,\varepsilon}|+\mathcal{H}_N\right) \ds\notag \\
	&\leq C\varepsilon^{-d}\int_0^t\sup\limits_{i=1,\cdots,N}|X_s^{i,\varepsilon}-Y_s^{i,\varepsilon}|^2 \ds+C\mathcal{H}_NT\,.
	\end{align}
	Taking  the expectation on \eqref{ito4} it gives
	\begin{equation*}
	\EE\left[\sup\limits_{i=1,\cdots,N}|X_t^{i,\varepsilon}-Y_t^{i,\varepsilon}|^2\right]
	\leq 
	C\varepsilon^{-d}\int_0^t\EE\left[\sup\limits_{i=1,\cdots,N}|X_s^{i,\varepsilon}-Y_s^{i,\varepsilon}|^2\right] \ds+\frac{CT\varepsilon^{-(d-1)}}{\sqrt{N-1}}\,,
	\end{equation*}
	where we have used 
	\begin{equation*}
	\EE[|\mathcal{H}_N|]\leq (\EE[|\mathcal{H}_N|^2])^{\frac{1}{2}}\leq  \frac{C\varepsilon^{-(d-1)}}{\sqrt{N-1}}.
	\end{equation*}
	Applying Gronwall's inequality, we have
	\begin{equation*}
	\EE\left[\sup\limits_{i=1,\cdots,N}|X_t^{i,\varepsilon}-Y_t^{i,\varepsilon}|\right]\leq\left(\EE\left[\sup\limits_{i=1,\cdots,N}|X_t^{i,\varepsilon}-Y_t^{i,\varepsilon}|^2\right]\right)^\frac{1}{2}\leq \frac{C_T\varepsilon^{-d+\frac{1}{2}}\exp(C_T\varepsilon^{-d})}{(N-1)^{\frac{1}{4}}}\,,
	\end{equation*}
	for all $t\in[0,T]$. Let $\varepsilon=(\frac{\log N}{8C_T})^{-\frac{1}{d}}$, then one has
	\begin{align*}
	\EE\left[\sup\limits_{i=1,\cdots,N}|X_t^{i,\varepsilon}-Y_t^{i,\varepsilon}|\right]&\leq\left(\EE\left[\sup\limits_{i=1,\cdots,N}|X_t^{i,\varepsilon}-Y_t^{i,\varepsilon}|^2\right]\right)^\frac{1}{2} 
	\leq \frac{N^{\frac{1}{8}}\log (N)}{8C_T(N-1)^{\frac{1}{4}}}\leq CN^{-\frac{1}{8}}\log (N)\,.
	\end{align*}
	
	Now recall the definition of the empirical measures
	\begin{equation*}
	\mu_t^{X,\varepsilon}=\frac{1}{N}\sum\limits_{i=1}^N\delta_{X_t^{i,\varepsilon}}\,,
	\qquad \mu_t^{Y,\varepsilon}=\frac{1}{N}\sum\limits_{i=1}^N\delta_{Y_t^{i,\varepsilon}}\,,
	\qquad \mu_t^Y=\frac{1}{N}\sum\limits_{i=1}^N\delta_{Y_t^i}\,,
	\end{equation*}
	and notice that
	\begin{equation*}
	\pi_\ast=\frac{1}{N}\sum_{i=1}^{N}\delta_{X_t^{i,\varepsilon}}\delta_{Y_t^{i,\varepsilon}}\in \Lambda(\mu_t^{X,\varepsilon},\mu_t^{Y,\varepsilon})\,.
	\end{equation*}
	Then by the definition of $\mathcal{W}_\infty$ distance one has 
	\begin{equation*}
	\mathcal{W}_\infty(\mu_t^{X,\varepsilon},\mu_t^{Y,\varepsilon})\leq \pi_\ast\mathop{\mbox{-ess sup }}\limits_{(x,y)\in D\times D}|x-y|\leq\sup\limits_{i=1,\cdots,N}|X_t^{i,\varepsilon}-Y_t^{i,\varepsilon}| \,.
	\end{equation*}
	This leads to
	\begin{equation}\label{X-Y}
	\EE\left[\mathcal{W}_\infty(\mu_t^{X,\varepsilon},\mu_t^{Y,\varepsilon})\right]\leq \EE\left[\sup\limits_{i=1,\cdots,N}|X_t^{i,\varepsilon}-Y_t^{i,\varepsilon}|\right]\leq CN^{-\frac{1}{8}}\log (N)
	\end{equation}
	and
	\begin{equation}\label{X-Y'}
	\EE\left[\mathcal{W}_\infty(\mu_t^{Y,\varepsilon},\mu_t^{Y})\right]\leq\EE\left[\sup\limits_{i=1,\cdots,N}|Y_t^{i,\varepsilon}-Y_t^{i}|\right]\leq C_T\omega(\varepsilon)^{e^{-CT}}\leq C[\omega(\log^{-\frac{1}{d}}(N))]^{\exp(-CT)}\,.
	\end{equation}
	Furthermore, it follows from the the convergence estimate obtained in \cite[Theorem 1]{fournier2015rate} that
	\begin{equation*}
	\EE\left[\mathcal{W}_2^2(\mu_t^Y,\rho_t)\right]\leq C\left\{\begin{aligned}
	&N^{-\frac{1}{2}}+N^{-\frac{q-2}{q}}, ~&&\mbox{if }d<4\mbox{ and }q\neq 4\,,\\
	&N^{-\frac{1}{2}}\log(N+1)+N^{-\frac{q-2}{q}},~ &&\mbox{if }d=4\mbox{ and }q\neq 4\,,\\
	&N^{-\frac{2}{d}}+N^{-\frac{q-2}{q}}, ~ &&\mbox{if }d>4\mbox{ and }q\neq \frac{d}{d-2}\,.\\
	\end{aligned}\right.
	\end{equation*}
	Combining inequalities \eqref{X-Y} and \eqref{X-Y'}, we arrive at 
	\begin{align*}
	&\EE\left[\mathcal{W}_2(\mu_t^{X,\varepsilon},\rho_t)\right]\leq \EE\left[\mathcal{W}_2(\mu_t^{X,\varepsilon},\mu_t^{Y,\varepsilon})\right]+\EE\left[\mathcal{W}_2(\mu_t^{Y,\varepsilon},\mu_t^Y)\right]+\EE\left[\mathcal{W}_2(\mu_t^Y,\rho_t)\right]\notag\\
	\leq& \EE\left[\mathcal{W}_\infty(\mu_t^{X,\varepsilon},\mu_t^{Y,\varepsilon})\right]+\EE\left[\mathcal{W}_\infty(\mu_t^{Y,\varepsilon},\mu_t^Y)\right]+\EE\left[\mathcal{W}_2(\mu_t^Y,\rho_t)\right]\notag\\
	\leq &CN^{-\frac{1}{8}}(\log N)+C[\omega(\log^{-\frac{1}{d}}(N))]^{\exp(-CT)}\notag\\
	&+C\left\{\begin{aligned}
	&N^{-\frac{1}{4}}+N^{-\frac{q-2}{2q}}, ~&&\mbox{if }d<4\mbox{ and }q\neq 4,\\
	&N^{-\frac{1}{4}}(\log(N+1))^{\frac{1}{2}}+N^{-\frac{q-2}{2q}},~ &&\mbox{if }d=4\mbox{ and }q\neq 4,\\
	&N^{-\frac{1}{d}}+N^{-\frac{q-2}{2q}}, ~ &&\mbox{if }d>4\mbox{ and }q\neq \frac{d}{d-2},\\
	\end{aligned}\right. \notag \\
	\leq&C[\omega(\log^{-\frac{1}{d}}(N))]^{\exp(-CT)}\,,
	\end{align*}
	which completes the proof.
\end{proof}

{\bf Acknowledgments:}
This research was funded in part by Canada’s National Science
and Engineering Research Council (NSERC) through grants to RF
and WS. HH also wants to acknowledge support from National Nature Science Foundation of China (NSFC) (Grant No. 11771237).

\bibliographystyle{amsxport}
\bibliography{bounded}


\end{document}